\theoremstyle{plain}
\numberwithin{equation}{section}
\newtheorem{theorem}{Theorem}[section]
\newtheorem{proposition}[theorem]{Proposition}%[section]
\newtheorem{corollary}[theorem]{Corollary}%[section]
\newtheorem{lemma}[theorem]{Lemma}%[section]
\newtheorem{example}[theorem]{Example}%[section]
\newtheorem{remark}[theorem]{Remark}%[section]
\newenvironment{proofad1}{\removelastskip\par\medskip
\noindent{\textbf {Proof of Theorem \ref{mainth1}}.}
\rm}{\penalty-20\null\hfill$\blacksquare$\par\medbreak} %%
\newenvironment{proofad2}{\removelastskip\par\medskip
\noindent{\textbf {Proof of Corollary \ref{nonhom}}.}
\rm}{\penalty-20\null\hfill$\blacksquare$\par\medbreak} %%
\definecolor{darkred}{rgb}{0.8,0,0}
\definecolor{darkblue}{rgb}{0,0,0.7}
\definecolor{darkgreen}{rgb}{0,0.4,0}
\newcommand{\eps}{\varepsilon}
\newcommand{\R}{{\mathbb R}}
\newcommand{\W}{{\mathcal W}}
\newcommand{\FF}{{\mathcal F}}
\newcommand{\EE}{{\mathcal E}}
\newcommand{\V}{{\mathcal V}}
\newcommand{\un}{{\rm 1\kern -2.5pt l}}
\newcommand{\tr}{{\rm Tr}}
\def\w{\mathbf{w}}
\def\u{\mathbf{u}}
\def\vv{\mathbf{v}}
\def\yy{\mathbf{y}}
\def\n{\mathbf{n}}
\def\xx{\mathbf{x}}
\def\vv{\mathbf{v}}
\def\zz{\mathbf{z}}
\def\eps{\varepsilon}
\def\R{{\mathbb R}}
\def\M{{\mathcal M}}
\def\H{{\mathcal H}}
\def\eps{\varepsilon}
\def\R{{\mathbb R}}
\def\e{{\mathcal F}}
\def\M{{\mathcal M}}
\def\H{{\mathcal H}}
\def\F{{\mathcal F}}
\def\E{{\mathbb{E}}}
\def\argmin{\mathop{{\rm argmin}}\nolimits}
\def\Tr{\mathop{{\rm Tr}}\nolimits}
\def\dv{\mathop{{\rm div}}\nolimits}
\def\curl{\mathop{{\rm curl}}\nolimits}
\def\sym{\mathop{{\rm sym}}\nolimits}
\def\span{\mathop{{\rm span}}\nolimits}
\def\u{\mathbf{u}}
\def\v{\mathbf{v}}
\def\e{\mathbf{E}}
\def\z{\mathbf{z}}
\def\v{{\bf v}}
\def\w{{\bf w}}
\def\e{{\bf e}}
\def\x{{\bf x}}
\def\Id{\mathbf{I}}
\def\wconv{\rightharpoonup}
\newcommand{\KKK}{\color{black}}
\renewcommand{\epsilon}{\varepsilon}
\newcommand{\beeq}{\begin{equation}}
\newcommand{\eneq}{\end{equation}}
\newcommand{\bear}{\begin{array}}
\newcommand{\enar}{\end{array}}
\newcommand{\bema}{\begin{displaymath}}
\newcommand{\enma}{\end{displaymath}}
\newcommand{\beea}{\begin{eqnarray}}
\newcommand{\enea}{\end{eqnarray}}
\newcommand{\om}{\Omega}
\newcommand{\bb}{\boldsymbol}
\def \qed{ \hfill \rule{6pt}{6pt}
\medskip}
\newcommand{\lab}[1]{ \label{#1} }
\def\Id{\mathbf{I}}
\def\wconv{\rightharpoonup}
\title[Linearization of incompressible elasticity]{Linearization of elasticity models for incompressible materials}
  \author{Edoardo Mainini}
\address[Edoardo Mainini]{Dipartimento di Ingegneria meccanica, energetica, gestionale e dei trasporti, 
  Universit\`a  degli studi di Genova, Via all'Opera Pia, 15 - 16145 Genova Italy.}
\email{mainini@dime.unige.it}
\urladdr{http://www.dime.unige.it/it/users/edoardo-mainini}
\author{Danilo Percivale}
\address[Danilo Percivale]{Dipartimento di Ingegneria meccanica, energetica, gestionale e dei trasporti, 
  Universit\`a  degli studi di Genova, Via all'Opera Pia, 15 - 16145 Genova Italy.}
\email{percivale@dime.unige.it}
\subjclass[2010]{49J45, 74K30, 74K35, 74R10}
\keywords{Calculus of Variations, %Pure Traction problems, 
  Linear Elasticity, Finite Elasticity,   %Critical points, 
  Gamma-convergence,  Rubber-like materials}
\begin{document}
 \maketitle
\begin{abstract}
We obtain  linear elasticity  as $\Gamma$-limit   of finite elasticity under incompressibility assumption and Dirichlet boundary conditions. The result is shown for a large class of energy densities for rubber-like materials. 
\end{abstract}

%\begin{center}
%\large\sffamily DRAFT
%\end{center}
%\tableofcontents
%\begin{flushleft}
%  {\bf AMS Classification Numbers (2010):\,} 49J45, 74K30, 74K35, 74R10.\\
%  {\bf Key Words:\,} Calculus of Variations, %Pure Traction problems, 
%  Linear Elasticity, Nonlinear Elasticity, Finite Elasticity,   %Critical points, 
%  Gamma-convergence, Asymptotic analysis.
%  %nonlinear Neumann problems.\\
%\end{flushleft}

%~~~~~~~~~~~~~~~~~~~
\section{Introduction}
 %of strains alone.
 
Organic materials require sophisticated mechanical models. They exhibit nonlinear stress-strain behaviors and are often  elastic up to large strains. Typical examples are natural rubber as well as  artificial elastic polymers with rubber-like properties. 
%It is well known that organic materials require sophisticated elastic models, some of which exhibit a nonlinear stress-strain behavior at modest strain: typical examples are natural rubber as well as  artificial elastic polymers with rubber-like properties. 
 These materials resist volume changes, are very compliant in shear and their shear modulus is of orders of magnitude smaller than the shear resistance of most metals. This motivate the modeling of rubber-like materials as being incompressible 
   and hyperelastic 
\cite{HA, og2, og1,  SO}, so that their phenomenological description requires the introduction of an empirical strain energy density \cite{BA, MV, SHP}.  Similar properties are observed in soft biological tissues: in many biomechanical studies blood vessels are modeled as nonlinear elastic materials that are incompressible under physiological loads  \cite{CF,HO,KS,HSMGR}.

\smallskip

From a mathematical point of view, we  consider a hyperelastic body that occupies a bounded open region $\Omega\subset \mathbb R^3$ in its reference configuration. In presence of a body force field $\mathbf g: \Omega\to\mathbb R^3$, the energy of the system is given by  the stored elastic energy and the contribution of the external forces  \[
 \int_\Omega\mathcal W^I(x,\nabla\mathbf y(x))\,dx-\int_\Omega (\mathbf y(x)-x)\cdot\mathbf g(x)\,dx.
 \]
 Here, $\mathbf y:\Omega\to\mathbb R^3$ denotes the deformation field, $\nabla \mathbf y$ denotes the deformation gradient and $\mathcal W^I$ is the incompressible elastic energy density.  $\mathcal W^I$ is assumed to be frame indifferent and minimized at the identity with $\mathcal W^I(x,\mathbf I)=0$, so that without an external load $\mathbf y(x)=x$ is a minimizer of the total energy corresponding to the stress-free configuration $\Omega$. In order to take into account that the body is incompressible, $\mathcal W^I(x,\mathbf F)=+\infty$ whenever $\det \mathbf F\not =1$.
 
 \smallskip
 
 A common approach in the study of rubber-like materials is to consider a stored energy density $\mathcal W$ which is defined in the compressible range,  the kinematic constraint $\det \mathbf F=1$ being relaxed to a volumetric penalization: a typical  expression of $\mathcal W$ is given by the usual isochoric-volumetric form 
 \begin{equation}\label{isovol}
\mathcal W(x,\mathbf F): =\mathcal W_{iso}(x, (\det \mathbf F)^{-1/3}\mathbf F)+\mathcal W_{vol}(\det \mathbf F)
\end{equation}
where  $x\in\om$ and $\det\mathbf F> 0$ (extended to $+\infty$ if $\det\mathbf F\le 0$). Here, the nonnegative function $\mathcal W_{iso}(x,\mathbf F_*)$ is defined for every  $\mathbf F_*$ such that $\det\mathbf F_*=1$ and satisfies $\mathcal W_{iso}(x,\mathbf I)=0$. Moreover,    $\mathcal W_{vol}(t)\ge 0$ for every $t> 0$ and $ \mathcal W_{vol}(1)=0$. 
In fact, we shall first choose a compressible energy density $\mathcal W$, for instance in the form \eqref{isovol},  requiring frame indifference and other suitable regularity conditions that will be introduced in Section \ref{sectmain}. Then, we shall  define the incompressible energy density $\mathcal W^I$ by setting $\mathcal W^I(x,\mathbf F)=\mathcal W(x,\mathbf F)$ if $\det\mathbf F=1$ and $\mathcal W^I(x,\mathbf F)=+\infty $ if $\det\mathbf F\neq1$. 
%Of course, this is equivalent to setting $\mathcal W^I
%=\mathcal W$ with $\mathcal W_{vol}(t)=+\infty$ if $t\neq 1$.

%(which is equivalent to choosing $\mathcal W_{vol}(t)=+\infty$ if $t\neq 1$).
%\beeq
%\mathcal W^{I}(x, \mathbf F):=\left\{\begin{array}{ll} &\mathcal W(x,\mathbf F)\ \hbox{if} \det \mathbf F=1\\
%&\\
%& +\infty\ \hbox{otherwise.}\end{array}\right.
%\eneq

\smallskip

The simplest model is the homogeneous  Neo-Hookean solid:  the energy density  is of the form
\eqref{isovol} with 
\begin{equation}\label{neo}
\W_{iso} (\mathbf F_*):=
\mu\,((\mathrm{Tr}(\mathbf F_*^T\mathbf F_*) -3),
\end{equation}
where $\det \mathbf F_*=1$
 and the shear modulus $\mu$ is determined experimentally: this model fits  material behaviors with sufficient accuracy under moderate straining while, at higher strains, it can be  replaced by the more general Ogden model, namely 
\begin{equation}\lab{ogden}
\mathcal W_{iso}(\mathbf F_*):=\sum_{p=1}^{N}\frac{\mu_p}{\alpha_p}(\mathrm{Tr}((\mathbf F_*^T\mathbf F_*)^{\alpha_p/2})-3)
\end{equation}
where  $N, \mu_p, \alpha_p$ are material constants. For particular values of the material constants the Ogden model reduces to either the Neo-Hookean solid ($N=1,\ \alpha_1=2$) or the so called Mooney-Rivlin material ($N=2,\ \alpha_1=2,\ \alpha_2=-2$) which is often applied to model incompressible biological tissue, see \cite{ML1,ML2}.
%and can be also rewritten as 
%\begin{equation}\label{mr}
% \W(\mathbf F):=
%\frac{\mu_1}{2}\,(\mathrm{Tr}(\mathbf F_*^T\mathbf F_*)-3)-\frac{\mu_2}{2} \left(\frac{1}{2}\left(\mathrm{Tr}^2(\mathbf F_*^T\mathbf F_*)-\mathrm{Tr}((\mathbf F_*^T\mathbf F_*)^2)\right)-3\right).
%\end{equation}
%\MMM
Another phenomenological material model, motivated for simulating the mechanical behavior of carbon-black filled rubber and for its important applications in the manufacture of automotive tyres, has been introduced by Yeoh, see \cite{Y,Y2}: the isochoric part of the strain energy density is given by
\begin{equation}\lab{yeoh}
\mathcal W_{iso}(\mathbf F_*):= \sum_{k=1}^3 c_k ((\mathrm{Tr}(\mathbf F_*^T\mathbf F_*)-3)^k
\end{equation}
where $c_k,\ k=1,2,3$ are material constants. 
\KKK
For a complete description of the main properties of such energy densities and other models, we refer to the classical monographs such as \cite{C, HA, OGDEN} or to the reviews in \cite{AHS,  BA, MV}, see also \cite{AB, KAN, SHP}. 

\smallskip

Let us now introduce the linearization.
If  $h > 0$ is an adimensional parameter, we scale the body force field by taking $\mathbf g:= h\mathbf f$ and set  $\mathbf y(x):=x+h \mathbf \v(x)$.  The resulting total energy is
\[
\mathcal E_h(\v):=\displaystyle \int_\om \W^I(x, \mathbf I+h\nabla \v)\,dx-h^2\int_\om\mathbf f\cdot\mathbf v\,dx\]
and it seems meaningful to ask what is the correct scaling of energies $\mathcal E_h$ as $h\to 0^+$. Roughly speaking in the spirit of \cite{ABP} (see also \cite{LM, PT1, PTplate, PT2, PT4, PPG}) we will show that, under suitable boundary conditions, 
 $$ \inf\mathcal E_h= h^2\min\mathcal E_0 +o(h^2),$$
 where 
\[ \mathcal E_0(\v):=\left\{\begin{array}{ll} \displaystyle\frac12\int_\om \mathbb E(\v)\, D^2\mathcal W(x,\mathbf I)\,\mathbb E(\v)\,dx -\int_\om\mathbf f\cdot\mathbf v\,dx\quad &\hbox{if }\ \dv\v=0\ \hbox{a.e. in}\ \om\\
&\\
+\infty\quad &\hbox{otherwise}.
\end{array}\right.
 \]
%and 
%$$\mathcal V_0(x,\mathbf F)=\frac{1}{2}\displaystyle \mathbf F^T D^2\W(x,\mathbf I) \mathbf F$$
 The quadratic form appearing in the expression of $\mathcal E_0$ features  the infinitesimal strain tensor  $\E(\vv):= \tfrac12(\nabla\v+\nabla \v^T)$ and can be obtained by a formal Taylor expansion of $\mathcal W$ around the identity matrix $\mathbf I$, with $D^2$ denoting the Hessian of $\mathcal W(x,\cdot)$. We stress that purely volumetric  perturbations of $\mathcal W$ do not affect $\mathcal W^I$ nor  $\mathbb E(\v)\, D^2\mathcal W(x,\mathbf I)\,\mathbb E(\v)$, due to the divergence-free condition.
 Moreover, we will prove that if $$\mathcal E_h(\v_h)-\inf \mathcal E_h=o(h^2),$$ then $$\v_h\wconv \v_0\in \argmin\mathcal E_0$$ in the weak topology of a suitable Sobolev space.
 Since the compliance in shear of rubber-like materials and the strong nonlinearity of their stress-strain behavior even at modest strain do not allow  to suppose that  small strains correspond to small loads,  it must be clarified  that { \it it would not be reasonable to assume} that either $h\v$ or $h \mathbb E(\v)$ are small in any sense. 
 Anyhow we highlight that linearized models may provide a good approximation that fits experimental data, see for instance \cite{LZWK}.
 
 \smallskip

%  in many biomechanical studies, soft tissues of blood vessels can be modeled as pseudoelastic materials that are incompressible under physiological loads, see  \cite{CF,HO,KS,HSMGR}. 
% In particular, it is shown in \cite{LZWK} that a linearized model fits experimental data of porcine coronary arteries in inflation-stretch tests.
% \KKK
 
From the viewpoint of the Calculus of Variations, derivation of linearized elasticity from finite elasticity has a long history that started in \cite{DMPN},
 where $\Gamma$ convergence and convergence of minimizers of the associated Dirichlet boundary value problems are proven in the compressible case (see also \cite{ABK}, \cite{ADMDS}, \cite{ADMLP},  \cite{MPTJOTA}, \cite{MPTARMA}  for more recent results).
In this paper we show how these results can be extended to the incompressible case, i.e., assuming the constraint $\det\mathbf \nabla \mathbf y=1$ on admissible deformations fields. 
It is well-known that such a constraint poses some challenges to the  $\Gamma$-convergence analysis 
(see for instance the derivation of a two-dimensional model for elastic plates in \cite{CD}). 
  Indeed, some novel approach (that we develop in Lemma \ref{reynolds}) is required for the construction of recovery sequences, due to the necessity of recovering the linearized incompressibility constraint $\dv\v=0$ with a sequence $\v_h$ satisfying $\det(\mathbf I+h\nabla\v_h)=1$ a.e. in $\om$. Moreover,   a different strategy is also needed to ensure that the whole sequence $(\v_h)$ and $\v$ satisfy the same Dirichlet condition. To this end the crucial point consists in analyzing  vector potentials:  we show in Lemma \ref{w=0} that if $\v\in H^1(\om,\mathbb R^3)$, $\mathrm{div}\,\v=0$ in $\om$ and $\v=0$ on $\Gamma\subset\partial\om$, then, under suitable topological assumptions  (see conditions \eqref{OMEGA}-\eqref{Gamma}), there exists $\w\in H^2(\om,\mathbb R^3)$ such that $\curl \w=\v$ in $\om$ and $\w=0$ on $\Gamma$. %after {\it a long an winding road} started with  Lemma 3.1. 
  Taking advantage of this result, the construction of the recovery sequence
relies on a careful  approximation of $\w$ thus outflanking the constraint $\dv \v=0$.

 \subsection*{Plan of the paper} In Section 2 we state the main result. Its proof requires the analysis of vector potentials in Sobolev spaces, which is the object of section 3. In section 4, we develop suitable approximation results that are used for the construction of the recovery sequence in the proof of the main theorem, which is instead contained in Section 5. 
 
%\newpage
%%%%%%%%%%%%%%%%%%%%%%%%%%%%%%%%%%%%%%%%%%%%%
%~~~~~~~~~~~~~~~~~~~~
\section{Main result}\label{sectmain}
%{Statement of the problem and main results}
%%%%%%%%%%%%%%%

In this section we introduce the basic notation and all the assumptions of our theory. Then, we state the main result. 

\subsection*{Assumptions on the reference configuration}
Concerning the reference configuration $\om\subset \R^3$, we assume that
\begin{equation}\label{OMEGA}\begin{array}{ll}
& i)\  \om \hbox{ is a bounded,  simply connected open set},\\
&\\
&ii)\  \partial\om\ \hbox{is a connected} \ C^{3}\   \hbox{manifold}  
\end{array}
\end{equation}
and we let $ \n\in C^{2}(\partial\Omega)$ denote  its outward unit normal vector. { We will prescribe a Dirichlet boundary condition on a  subset $\Gamma$ of $\partial\om$.    Letting $\partial\Gamma$ denote the relative boundary of $\Gamma$ in $\partial\om$ and letting $\mathcal H^2$ denote the two-dimensional Hausdorff measure,  we  assume that 
 \begin{equation}\begin{array}{ll}\lab{Gamma}
&i)  \hbox{ $\Gamma$ is a  closed  subset of $\partial\om$  and
 $\mathcal H^2(\Gamma)>0$},  \\
&\\
&ii)\
 \hbox{either}\quad \partial\Gamma=\emptyset\quad \hbox{ or }\quad \partial\Gamma \ \hbox{is a}\  C^{3}
 \hbox{ one-dimensional  submanifold of}\ \partial\om.
\end{array}
\end{equation}}

Through the proofs we will also suppose, without loss of generality, that $\Gamma$ is connected. Indeed, all the arguments that we shall develop can be extended to the non connected case  by considering each connected component. Besides, in case $\partial \Gamma=\emptyset$, it is possible to assume that $\partial\om$ is $C^{2,1}$ only (see Remark \ref{C21} later on).

\subsection*{Some notation} 
%Through the paper, $W^{k,r}(\om,\mathbb R^3)$, will denote the standard Sobolev spaces of vector functions that are $p$-summable along with their weak derivatives up to order $k$, and  $H^r(\om,\mathbb R^3):=W^{r,2}(\om,\mathbb R^3)$.
 For a Sobolev vector field $\u\in W^{1,r}(\om,\mathbb R^3)$, $r\ge 1$,  conditions like $\u=0$ on $\Gamma$ are always understood in the sense of traces. Moreover, we shall often use the decomposition in normal and tangential part at the boundary $\u=(\u\cdot\n)\,\n+\n\wedge(\u\wedge\n)$, where $\wedge$ denotes the cross product. Bold letters will be used in general for vector fields.

\subsection*{Assumptions on the elastic energy density}
Let $\W : \om \times \mathbb R^{3 \times 3} \to [0, +\infty ]$ be  ${\mathcal L}^3\! \times\! {\mathcal B}^{9} $- measurable. %For a.e. $x\in\om$, let  $\mathcal W(x,\mathbf F)=+\infty$ if $\det\mathbf F\le 0$. 
We assume that  $\W$ is frame indifferent and minimized at the identity, i.e.
%\MMM forse dire anche che  ?  \KKK
%
%We assume that for a.e. $x \in \om$

%
\beeq \lab{framind}\tag{$\bb{\mathcal W1}$} \W(x, \mathbf R\mathbf F)=\W(x, \mathbf F) \qquad \forall \, \mathbf\! \mathbf R\!\in\! SO(3), \quad \forall\, \mathbf F\in \mathbb R^{3 \times 3},\ \quad  \hbox{ for a.e.}\ x\in \om\,,
\eneq
\beeq \lab{Z1}\tag{$\bb{\mathcal W2}$}
\min \mathcal W=\KKK	\W(x,\mathbf I)=0\ \qquad \hbox{for a.e. }x \in \om  \,,
\eneq
Moreover, we assume that $\W(x,\cdot)$ is $C^2$ in a neighbor of rotations (with gradient and Hessian denoted by $D$ and $D^2$), i.e.,
\beeq\begin{array}{ll}\lab{reg}\tag{$\bb{\mathcal W3}$} &   \hbox{there exists a neighborhood} \ \mathcal U \  \hbox{of}\ SO(3) \hbox{ s.t., for a.e. $x\in\om$,  }
\mathcal W(x,\cdot)\in C^{2}(\mathcal U),\\
& \hbox{with a modulus of continuity of $D^2\mathcal W(x,\cdot)$ that does not depend on $x$}.\\
&\hbox{Moreover,  there exists } K>0\ \hbox{such that} \ |D^2 \mathcal W(x,\mathbf I)|\le K\,\,\,\hbox{for a.e. $ x\in\om$}.
\end{array}
\eneq
The coercivity of $\W$ is described by the following property: there exists $C>0$ and $p\in(1,2]$ such that
\beeq \lab{coerc}\tag{$\bb{\mathcal W4}$}
\begin{array}{ll}
\W(x,\mathbf F)\ge  C \ g_{p}(d(\mathbf F, SO(3)))\qquad
 \forall\, \mathbf F\in \mathbb R^{3 \times 3},\quad \hbox{for a.e.}\ x\in \om,
\end{array}
\eneq
where $g_p:[0,+\infty)\to\mathbb R$ is the convex function defined by
\beeq\lab{gp}
g_{p}(t)=\left\{\begin{array}{ll} \!\! t^{2}\quad &\hbox{if}\ 0\le t\le 1\\
&\\
\!\! \displaystyle \frac{2t^{p}}{p}-\frac{2}{p}+1\quad &\hbox{if}\ t\ge 1,\\
\end{array}\right.
\eneq
%\beeq\lab{nhookmriv}
%\exists\ a\,,b\,\ge  \, 0 : \W(\xx, \mathbf F)\le a(|\mathbf F|^{2}-3)+ b(|\mathbf F^{-1}|^{2}-3)\ \ \forall \mathbf F : \det \mathbf F=1.
%\eneq
%{\color{red} La condizione precedente  \`e il caso dei materiali 3eo-Hookean o Mooney-Rivlin. Per ragioni che saranno chiare  pi\`u avanti i materiali di Ogden sembrerebbero non rientrare nel gioco.  }\\
%Therefore, 
%$\W^{I}(x,\nabla \mathbf y(x))$ shall represent our stored energy density, being  $\mathbf v(x)$ the deformation and $\nabla \mathbf v(x)$ the deformation gradient.\\

%\MMM da qualche parte bisogna dire che $|\mathbf F|:=\sqrt{\mathrm{Tr}(\mathbf F^T\mathbf F})$\KKK

Concerning the latter assumption, we  refer to \cite{ADMDS} for a discussion about the growth properties of  energy densities of the form \eqref{isovol}-\eqref{ogden}: for certain ranges of the parameters therein and suitable choice of $\mathcal W_{vol}$, they exhibit a quadratic growth for small deformation gradients  and a $p$-growth, $1<p\le2$, for large deformation gradients, in particular they satisfy all the above assumptions. Indeed, taking the model choice $\mathcal W_{vol}(t)=c(t^2-1-2\log t)$, $c>0$, it is shown in \cite{ADMDS} that  the Neo-Hookean energy density \eqref{isovol}-\eqref{neo}  satisfies \eqref{coerc} for some $p\in(1,2)$, and in fact for $p=2$ when restricting to  $\det\mathbf F=1$ (then the same holds  for the Yeoh model \eqref{isovol}-\eqref{yeoh} with positive $c_i$'s); similarly,  the general Ogden model \eqref{isovol}-\ref{ogden}  has a $p$-growth with $p\in(1,2)$ if $\mu_i>0$ and $0<\alpha_i<3$ for all $i=1,\ldots,N$ with $\alpha_i> 6/5$ for at least one $i$. We stress that the Ogden model can exhibit a less than quadratic growth even in the incompressibility regime $\det\mathbf F=1$, as can be checked for instance by choosing  $N=1$, $\alpha_1=3/2$ and $\mathbf F=\mathrm{diag}(\lambda^{-2},\lambda,\lambda)$ therein.       
\KKK

Let us discuss some first consequences of the above assumptions on $\mathcal W$.
Since $\mathcal W\ge 0$,  assumptions
 \eqref{framind}  and \eqref{Z1} yield  
 \begin{equation}\label{smooth0}\W(x,\mathbf R)\!=\!0,\ D \W(x,\mathbf R)\!=\!0 \quad \forall \mathbf R\in SO(3),\quad \mbox{for a.e. $x\in\om$}\end{equation}
 and in particular
 the reference configuration
has zero energy and
is stress free. 
Due to frame indifference there exists a function $\mathcal V$ such that 
\begin{equation*}
\W(x,\mathbf F)=\V(x,\textstyle{\frac{1}{2}}( \mathbf F^T \mathbf F - \mathbf I))\,,
\qquad
\ \forall\, \mathbf F\in \mathbb R^{3\times 3},\  \hbox{ for a.e. }x\in \om,
\end{equation*}
so by setting $\mathbf F=\Id +h\mathbf B$, where $h> 0$ is an adimensional small parameter, we have
\begin{equation*}
%\mathcal V_{h}(x,\mathbf B):= 
h^{-2}\mathcal W(x,\Id+h\mathbf B)=h^{-2}\V(x,h\,{\rm sym}\mathbf B+h^{2}\mathbf B^{T}\mathbf B),
\end{equation*}
where $\mathrm{sym}\mathbf B:=\frac12(\mathbf B^T+\mathbf B)$, and then
we get via Taylor's expansion
\[\displaystyle %\lim_{h\to 0}\mathcal V_{h}(x,\mathbf B) =
\lim_{h\to 0} h^{-2}\mathcal W(x,\mathbf I+h\mathbf B)=
\frac{1}{2} \,{\rm sym} \mathbf  B\, D^2\V (x, \mathbf 0) \ {\rm sym}\mathbf  B=\frac12\, \mathbf B^T D^2\mathcal W(x,\mathbf I)\,\mathbf B.%=: \mathcal V_0(x,{\rm sym}\mathbf  B),
\]
Hence, \eqref{coerc}  implies that for a.e. $x\in \om$
\begin{equation}\label{ellipticity}
\frac12\, \mathbf B^T D^2\mathcal W(x,\mathbf I)\,\mathbf B=\frac12\mathrm{sym}\mathbf B\,D^2\mathcal W(x,\mathbf I)\,\mathrm{sym}\mathbf B
	%\mathcal V_0 (\xx,\hbox{\rm sym}\,\mathbf B)
	\, \geq \, \frac{C}{4} \, |\hbox{\rm sym}\,\mathbf  B|^2
	\quad \forall \  \mathbf  B\in \mathbb R^{3 \times 3}.
	%\,,
%\ \hbox{ for a.e. }\,x\in\Omega.%,\ \mathbf B\in \M^{3\times 3}\,.
\end{equation}
Here and through the paper, for a matrix $\mathbf B\in\mathbb R^{3\times3}$, we denote $|\mathbf B|:=\sqrt{\mathrm{Tr}(\mathbf B^T\mathbf B})$.
%~~~~~~~~~~~~~~~~~~~~
%

\subsection*{Incompressibility}
We assume that the  material is  incompressible. This is done by introducing the incompressible elastic energy density $\W^I$ as
\[%\lab{incenergy}
\mathcal W^{I}(x, \mathbf F):=\left\{\begin{array}{ll} \mathcal W(x,\mathbf F)\quad &\hbox{if} \det \mathbf F=1\\
&\\
 +\infty\quad &\hbox{otherwise}\end{array}\right.
\]

%We notice that for model energy density $\mathcal W$ of the form \eqref{isovol},
%Moreover, the choice of $\mathcal W_{vol}$ does not affect the resulting $\mathcal W^I$:  it is therefore not restrictive to assume that $\mathcal W_{vol}$ takes a specific form when assuming \eqref{coerc}.

\subsection*{External forces}
We introduce  a  body force field $\mathbf f\in L^{\frac{3p}{4p-3}}(\Omega,\R^3)$, where $p$ is such that \eqref{coerc} holds. The corresponding contribution to the energy is given by the following functional, defined 
 for  $\v\in W^{1,p}(\om,\mathbb R^3)$
\begin{equation}\label{external}
\mathcal L(\v):=\int_\om\mathbf f\cdot\v\,dx.
\end{equation}
%where $\mathbf f\in L^{\frac{3p}{4p-3}}(\om,\mathbb R^3)$ is given.
 By the Sobolev embedding $W^{1,p}(\om,\mathbb R^3)\hookrightarrow L^{\frac{3p}{3-p}}(\om,\mathbb R^3)$, $\mathcal L$ is a continuous functional on $W^{1,p}(\om,\mathbb R^3)$ and $|\mathcal L(\v)|\le C_{\mathcal L}\|\v\|_{W^{1,p}(\om,\mathbb R^3)}$ holds for a suitable constant $C_{\mathcal L}$ that depends on $\om$ and $\mathbf f$.

\subsection*{Rescaled energies}
 The functional representing the scaled total energy is denoted by $\mathcal F_h^{I}: W^{1,p}(\Omega,\R^3)\to \R\cup\{+\infty\} $ and defined as follows
\begin{equation*}
\label{nonlinear}
\displaystyle \mathcal F_h^{I}(\v):=\frac1{h^2}\int_\om\mathcal W^I(x,\mathbf I+h\nabla\v)\,dx
%\ :=\
%\int_\Omega \mathcal V_{h}^{I}(x, \nabla\v)\, d\xx\,
-\mathcal L(\v).
\end{equation*}
%where
%\beeq
% \mathcal V_{h}^{I}(x, \mathbf B):=h^{-2}\mathcal W^{I}(\x, \mathbf I+h\mathbf B).
 %\eneq
%\beeq
%\lab{potential} \mathcal L(\v):\ =\,\int_{\Omega} \mathbf f\cdot\mathbf v\ dx.
%\eneq

%A sequence  $\v_{h}\in  H^1(\Omega;\R^3)$ is said a {\it minimizing sequence} for the sequence %of energy functionals $\F_{h}$ if there exists a scalar sequence $\eps_{h}\to 0^{+}$ as $h\to %0^{+}$ such that
%\beeq
%\mathcal F_{h}(\v_{h})\,\le \,\inf \mathcal F_{h}+\eps_{h}\,, \qquad \forall\, h>0 \,.
%\eneq
%

\subsection*{Linearized functional}
In this paper we are interested in the  asymptotic behavior as $h\downarrow 0_+$
%of such minimizing sequences
of functionals $\mathcal F_h^I$
and to this aim we
introduce the limit energy functional $\mathcal F^{I}: W^{1,p}(\Omega,\R^3)\to \R\cup \{+\infty\}$ defined by
\begin{equation}
\label{DTfunc}
\F^{I}(\v): =\left\{\begin{array}{ll}
%&\displaystyle\int_\Omega 
%\mathcal V_{0}^{I}(\xx,\mathbb E(\v))
% dx
\displaystyle
\frac12\int_\om \mathbb E(\v)D^2\mathcal W(x,\mathbf I)\,\mathbb E(\v)\,dx 
 -\mathcal L(\mathbf v)\quad &\hbox{if} \ \v\in H^1_{\mathrm{div}}(\om,\mathbb R^3)\\
 &\\
 \ +\infty\quad &\hbox{otherwise in} \ W^{1,p}(\om,\mathbb R^3)
 \end{array}
 \right.
\end{equation}
%For every displacements field $\vv\in H^1(\Omega;\R^3)$,
  where $\E(\vv)\!:=\! \hbox{sym }   \!\nabla\vv$ denotes the infinitesimal strain tensor field associated to the displacement field $\v$, and 
where $H^1_{\mathrm{div}}(\om,\mathbb R^3)$ is the set of $H^1(\om,\mathbb R^3)$ vector fields whose divergence vanishes a.e. in $\om$.

{Since we  work with the incompressible energy density $\mathcal W^I$, we stress that if the function $\mathcal W$ is replaced by any other $\widetilde {\mathcal W}$ 
 such that assumptions \eqref{framind}, \eqref{Z1}, \eqref{reg},    
\eqref{coerc} are satisfied and $\widetilde{\mathcal W}(x,\mathbf F)= {\mathcal W}(x,\mathbf F)$ as soon as $\det \mathbf F=1$, then  this does not affect functional $\mathcal F^I$. Indeed, if $\mathrm{Tr}\,\mathbf B=0$ then $\det(\exp(h\mathbf B))=\exp(h\tr\, \mathbf B)=1$, so that by Taylor's expansion we have
\begin{equation*}\begin{aligned}
&\displaystyle \frac12\,\mathrm{sym}\mathbf B\,D^2\widetilde{\mathcal W}(x,\mathbf I)\,\mathrm{sym}\mathbf B= \lim_{h\to 0}h^{-2}\widetilde {\mathcal W}(x, \mathbf I+h\mathbf B)=\lim_{h\to 0}h^{-2}\widetilde {\mathcal W}(x, \mathbf I+h\mathbf B+ o(h))\\
&\qquad\displaystyle=\lim_{h\to 0}h^{-2}\widetilde {\mathcal W}(x, \exp(h\mathbf B))=\lim_{h\to 0}h^{-2} \mathcal W^I(x, \exp(h\mathbf B))=\lim_{h\to 0}h^{-2} \mathcal W(x, \exp(h\mathbf B))\\
&\qquad\displaystyle= \lim_{h\to 0}h^{-2} \mathcal W(x,\mathbf I+h\mathbf B+ o(h))= \lim_{h\to 0}h^{-2} \mathcal W(x,\mathbf I+h\mathbf B)= \frac12\,\mathrm{sym}\mathbf B\,D^2\mathcal W(x,\mathbf I)\,\mathrm{sym}\mathbf B.
\end{aligned}
\end{equation*}
For instance, if the function $\mathcal W$ 
  is in the form \eqref{isovol}, then  $\mathcal W_{vol}$ can be arbitrarily replaced as soon as the  assumptions \eqref{framind}, \eqref{Z1}, \eqref{reg},    
\eqref{coerc} are matched.}

%as $h\to 0$. Therefore, letting $ {\mathcal W}_*(\mathbf F):=\mathcal W_{vol}(\det\mathbf F)$ and  making use of \eqref{Z1} and \eqref{reg}, we get ${\mathcal W}_*(\mathbf I)=D {\mathcal W_*}(\mathbf I)=0$ and  
% $$\mathbf B ^T D^2{\mathcal W_*}(\mathbf I)\,\mathbf B=\lim_{h\to0} h^{-2}\mathcal W_*(\mathbf I+h\mathbf B)=0$$ for any $\mathbf B\in\mathbb R^{3\times3}$ with null trace.

%\beeq\lab{V0I}
%\V_{0}^{I}(x,{\rm sym}\mathbf B):=\left\{\begin{array}{ll} & \V_{0}(x,{\rm sym}\mathbf B)\quad \hbox {if}\ \Tr\mathbf B=0\\
%&\\
%& +\infty\quad \hbox{otherwise}.
%\end{array}\right.
%\eneq
%%%%%%%%%%%%
%%%%%%%%%%%%

\subsection*{Statement of the main result}
In order to  prescribe a Dirichlet boundary condition on $\Gamma$  we define $\mathcal G^{I}: W^{1,p}(\om,\mathbb R^{3})\to \mathbb R\cup\{+\infty\}$ and  $\mathcal G_{h}^{I}: W^{1,p}(\om,\mathbb R^{3})\to \mathbb R\cup\{+\infty\}$  as
  \begin{equation*}
\mathcal G_{h}^{I}(\v):=\left\{\begin{array}{ll} \F_{h}^{I}(\v)\quad &\hbox{if}\ \v=0\ \hbox{on}\ \Gamma\\
&\\
+\infty\quad &\hbox{otherwise},
\end{array}\right.
\end{equation*}
 \begin{equation*}
\mathcal G^{I}(\v):=\left\{\begin{array}{ll} \F^{I}(\v)\quad &\hbox{if}\  \v=0\ \hbox{on}\ \Gamma\\
&\\
+\infty\quad &\hbox{otherwise}.
\end{array}\right.
\end{equation*}
We are ready for the statement of the main result
\begin{theorem}
\label{mainth1}
Assume \eqref{OMEGA}, \eqref{Gamma}, \eqref{framind}, \eqref{Z1}, \eqref{reg},    
\eqref{coerc}. 
Then for every vanishing sequence $(h_j)_{j\in\mathbb N}$  of strictly positive real numbers
we have
\beeq\lab{convmin}
\inf_{W^{1,p}(\om,\mathbb R^3)}\mathcal G^{I}_{h_{j}} \in \mathbb R. \eneq
If $(\mathbf v_j)_{j\in\mathbb N}\subset {W^{1,p}(\om,\mathbb R^3)}$ is a sequence such that $\v_j=0$ on $\Gamma$ for any $j\in\mathbb N$ and such that
\beeq \label{assinf}  \lim_{j\to +\infty}\left( \mathcal G^{I}_{h_{j}}(\v_{j})-\inf_{W^{1,p}(\om,\mathbb R^3)}\mathcal G^{I}_{h_{j}}\right)= 0,
\eneq
%for some $\v_{j}\in H^1(\Omega;\R^3)$ such that $\v_{j}=0$ on $\Gamma$,
then we have as $j\to+\infty$ 
\[ \v_{j}\wconv \v_* \ \hbox{weakly in}\ W^{1,p}(\Omega,\R^3),
\]
where $\v_*$ is the unique minimizer of $\mathcal G^I$ over $W^{1,p}(\om,\mathbb R^3)$,
and
\begin{equation*}   \mathcal G^{I}_{h_{j}}(\v_{j})\to \mathcal G^{I}(\v_*),\qquad
\inf_{W^{1,p}(\om,\mathbb R^3)}\mathcal G^{I}_{h_{j}}\to \min_{W^{1,p}(\om,\mathbb R^3)}\mathcal G^{I}.
\end{equation*}
\end{theorem}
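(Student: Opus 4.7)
The plan is to deploy the standard $\Gamma$-convergence plus equi-coercivity scheme with three ingredients: (a) an equi-coercivity estimate showing that sequences of bounded energy are bounded in $W^{1,p}$; (b) a $\Gamma$-$\liminf$ inequality along weakly converging sequences; (c) a recovery sequence matching $\mathcal G^I(\v)$ from above. Strict convexity of $\mathcal G^I$ on the affine subspace $\{\v\in H^1(\om,\R^3):\dv\v=0,\ \v=0\text{ on }\Gamma\}$, inherited from \eqref{ellipticity} via Korn's inequality with partial Dirichlet trace, will force uniqueness of the minimizer $\v_*$ and upgrade subsequential convergence of almost-minimizers to convergence of the whole sequence, yielding all the conclusions of the theorem.

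\textbf{Equi-coercivity and finiteness of $\inf\mathcal G^I_{h_j}$.} Assume $\v=0$ on $\Gamma$ and $\mathcal G^I_{h_j}(\v)\le C$. Combining \eqref{coerc} with $|\mathcal L(\v)|\le C_{\mathcal L}\|\v\|_{W^{1,p}}$ and Young's inequality produces $\int_\om g_p(d(\mathbf I+h_j\nabla\v,SO(3)))\,dx\le C h_j^2(1+\|\v\|_{W^{1,p}})$. Friesecke--James--M\"uller geometric rigidity then gives constant rotations $\mathbf R_j\in SO(3)$ controlling $\mathbf I+h_j\nabla\v-\mathbf R_j$ in $L^p$; the Dirichlet trace on a set of positive $\mathcal H^2$-measure of $\partial\om$ forces $\mathbf R_j\to\mathbf I$, and Korn's inequality with partial Dirichlet trace closes the loop to give $\|\v\|_{W^{1,p}}\le C$. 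The admissible competitor $\v\equiv 0$ shows $\inf\mathcal G^I_{h_j}\le 0$, so the bound above upgrades $\inf\mathcal G^I_{h_j}$ to a finite value, proving \eqref{convmin}.

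\textbf{$\Gamma$-$\liminf$ inequality.} Given $\v_j\weak\v$ in $W^{1,p}$ with bounded energy, necessarily $\det(\mathbf I+h_j\nabla\v_j)=1$ a.e.\ Expanding the determinant as $1+h_j\dv\v_j+h_j^2 q(\nabla\v_j)=1$ with $q$ a polynomial of degree two and three, together with the $L^p$ bound on $\nabla\v_j$, forces $\dv\v=0$ in the distributional limit. For the elastic term, set $A_j:=\{h_j|\nabla\v_j|\le 1\}$; a Lipschitz-truncation argument produces modified sequences $\widetilde\v_j$ bounded in $H^1$, equal to $\v_j$ on $A_j$ and weakly converging to $\v$ in $H^1$. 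Frame indifference lets us factor out the rigidity rotations on the good set, and the uniform $C^2$ expansion from \eqref{reg} yields
\[
\frac{1}{h_j^2}\mathcal W(x,\mathbf I+h_j\nabla\v_j)\chi_{A_j} \;\ge\; \tfrac12\mathbb E(\widetilde\v_j)\,D^2\mathcal W(x,\mathbf I)\,\mathbb E(\widetilde\v_j)\chi_{A_j}-o(1),
\]
while the $p$-growth of $g_p$ gives $\mathcal L^3(\om\setminus A_j)\to 0$, annihilating the contribution on the complement. Convexity of the quadratic form in $\mathbb E$ provides weak lower semicontinuity of the right-hand side, and Sobolev compactness gives $\mathcal L(\v_j)\to\mathcal L(\v)$; combining, $\liminf_j\mathcal G^I_{h_j}(\v_j)\ge\mathcal G^I(\v)$.

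\textbf{Recovery sequence and main obstacle.} This is the technical core. Given $\v\in H^1_{\mathrm{div}}(\om,\R^3)$ with $\v=0$ on $\Gamma$, Lemma \ref{w=0} provides $\w\in H^2(\om,\R^3)$ with $\curl\w=\v$ and $\w=0$ on $\Gamma$. Approximate $\w$ in $H^2$ by smooth $\w_h\in C^\infty(\overline\om,\R^3)$ with $\w_h=0$ on $\Gamma$ and a mild blow-up $\|\w_h\|_{W^{2,\infty}}=o(h^{-\alpha})$ for small $\alpha>0$; then $\widetilde\v_h:=\curl\w_h$ is smooth, divergence-free and vanishes on $\Gamma$. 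Now Lemma \ref{reynolds} enters: taking $\y_h$ to be the time-$h$ flow of $\widetilde\v_h$ (or an equivalent volume-preserving perturbation of $\mathbf I+h\nabla\widetilde\v_h$), one obtains $\det\nabla\y_h\equiv 1$ a.e.\ and $\y_h=x$ on $\Gamma$ since the flow fixes its equilibria; setting $\v_h:=(\y_h-x)/h$ yields an admissible sequence with $h\|\nabla\v_h\|_\infty\to 0$. The uniform Taylor expansion from \eqref{reg} and dominated convergence then give
\[
\frac{1}{h^2}\int_\om\mathcal W(x,\mathbf I+h\nabla\v_h)\,dx\to \tfrac12\int_\om\mathbb E(\v)\,D^2\mathcal W(x,\mathbf I)\,\mathbb E(\v)\,dx,
\]
matching $\mathcal G^I(\v)$ after accounting for $\mathcal L$. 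The hard part is exactly this step: a naive mollification of $\v$ fails because the linearized constraint $\dv\v=0$ is incompatible with the nonlinear constraint $\det=1$ on smooth approximants, and fixing the boundary trace on $\Gamma$ introduces further obstructions. The combination of the vector potential construction (which inherits the vanishing trace on $\Gamma$ from $\v$) with the flow construction (which converts a smooth divergence-free field vanishing on $\Gamma$ into a genuinely volume-preserving deformation fixing $\Gamma$) is exactly what reconciles the three constraints simultaneously.
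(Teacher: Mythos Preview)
Your overall strategy matches the paper's: equi-coercivity via geometric rigidity (Lemma~\ref{add}), a $\Gamma$-$\liminf$ via splitting into a good set $D_j=\{\sqrt{h_j}|\nabla\v_j|\le 1\}$ and a small bad set, and a recovery sequence built from a vector potential plus the flow construction of Lemma~\ref{reynolds}. The equi-coercivity and $\liminf$ steps are essentially right; your Lipschitz-truncation variant is a valid alternative to the paper's direct use of $1_{D_j}\nabla\v_j$.

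The genuine gap is in the recovery step. You claim that from a smooth approximation $\w_h$ of the vector potential $\w$ with $\w_h=0$ on $\Gamma$, the field $\widetilde\v_h:=\curl\w_h$ ``vanishes on $\Gamma$''. This is false: if $\w_h=0$ on $\Gamma$ then tangential derivatives of $\w_h$ vanish, so $(\curl\w_h)\cdot\n=0$ on $\Gamma$, but the tangential part of $\curl\w_h$ is determined by $\frac{\partial\w_h}{\partial\n}$ and is generically nonzero. In local coordinates with $\n=\e_3$, one gets $(\curl\w_h)_1=-\partial_3(\w_h)_2$ and $(\curl\w_h)_2=\partial_3(\w_h)_1$ on $\Gamma$. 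Without $\widetilde\v_h=0$ on $\Gamma$, the flow of $\widetilde\v_h$ does not fix $\Gamma$ and the Dirichlet condition is lost; Lemma~\ref{reynolds} is then inapplicable.

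This is precisely why the paper's Section~\ref{approximationsection} is so elaborate. From Lemma~\ref{w=0} one obtains $\widetilde\w$ with $\widetilde\w=0$ on $\Gamma$ \emph{and} $\curl\widetilde\w=\v=0$ on $\Gamma$; by Lemma~\ref{dernormwtang} this is equivalent to the extra trace condition $\frac{\partial}{\partial\n}(\widetilde\w\wedge\n)=0$ on $\Gamma$. That second-order information is what allows the cutoff construction in Lemma~\ref{gammal} (estimates \eqref{chart1}--\eqref{chart2} of Lemma~\ref{eps4}) to produce smooth $\w_{\eps,\lambda}$ with $\curl\w_{\eps,\lambda}\to\v$ in $H^1$ while $\curl\w_{\eps,\lambda}$ genuinely vanishes near $\Gamma$. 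When $\partial\Gamma\neq\emptyset$ a further layer (Lemmas~\ref{lastlemma}, \ref{GlvsG}) is needed to first push the vanishing to an enlarged $\Gamma_\lambda$. A plain $H^2$-approximation of $\w$ preserving only the zero-order trace cannot substitute for this machinery.
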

We remark that  under our assumptions, functionals $\mathcal G^I_{h}$ do not have minimizers in general.
In case they have,
 %(which could be enforced by standard requirements on $\W$ such as polyconvexity),
 then it is  possible to substitute assumption \eqref{assinf} with $\mathbf v_j\in \argmin \mathcal G^I_{h_j}$ and convergence of minimizers is deduced. 

\subsection*{Nonhomogeneous boundary conditions}
It is worth noticing that Theorem \ref{mainth1} works only when we assume homogeneous boundary conditions so it is quite natural to ask what happens in a more general case. Unfortunately the proof cannot be extended to the non homogeneous case as well, nevertheless this difficulty can be in some sense circumvented as follows. 
Fix $ \overline\v\in W^{1,\infty}(\om,\mathbb R^3)$ such that $\dv\overline\v=0\,$ a.e. in $\om$ and define for $h>0$
\begin{equation}\lab{Gvbar}
\overline{\mathcal G}^I(\v):=\left\{\begin{array}{ll}
 \mathcal F^I(\v)\quad &\hbox{if}\ \v=\overline\v\ \hbox{on}\ \Gamma\\
&\\
 +\infty\quad &\hbox{otherwise},\\
\end{array}\right.
\end{equation}
\begin{equation*}\lab{tilde2Gh}
\widetilde{\mathcal G}_h^I(\v):= \mathcal G_h^I(\v)+\int_\om \mathbb E(\overline\v) D^2\mathcal W(x, \mathbf I)\mathbb E(\v)\,dx+ \overline{\mathcal G}^I(\overline\v)
\end{equation*}
and 
\begin{equation}\label{lastlabel}
\widetilde{\mathcal G}^I(\v):= \mathcal G^I(\v)+\int_\om \mathbb E(\overline\v) D^2\mathcal W(x, \mathbf I)\mathbb E(\v)\,dx+\overline{\mathcal G}^I(\overline\v).
\end{equation}
A slight modification of the proof of Theorem \ref{mainth1} gives the following
\begin{corollary}\lab{nonhom}
Assume \eqref{OMEGA}, \eqref{Gamma}, \eqref{framind}, \eqref{Z1}, \eqref{reg},    
\eqref{coerc} and let $\overline\v\in W^{1,\infty}(\om,\mathbb R^3)$ such that $\dv\overline\v=0$ in $\om$.
Then for every vanishing sequence $(h_j)_{j\in\mathbb N}$  of strictly positive real numbers
we have
\beeq\lab{convmin2}
\inf_{W^{1,p}(\om,\mathbb R^3)}\widetilde{\mathcal G}^{I}_{h_{j}} \in \mathbb R.
 \eneq
% \end{corollary}
If $(\mathbf v_j)_{j\in\mathbb N}\subset {W^{1,p}(\om,\mathbb R^3)}$ is a sequence such that $\v_j=0$ on $\Gamma$ for any $j\in\mathbb N$ and such that
\beeq \label{assinf2} 
 \lim_{j\to +\infty}\left( \widetilde{\mathcal G}^{I}_{h_{j}}(\v_{j})-\inf_{W^{1,p}(\om,\mathbb R^3)}\widetilde{\mathcal G}^{I}_{h_{j}}\right)= 0,
\eneq
then we have as $j\to+\infty$
\[
\v_{j}\wconv \v_{0} \ \hbox{weakly in}\ W^{1,p}(\Omega,\R^3),
\]
where $\v_0$ is the unique minimizer of $\widetilde{\mathcal G}^{I}$ over $W^{1,p}(\om,\mathbb R^3)$,
and
\[
 \widetilde{\mathcal G}^{I}_{h_{j}}(\v_{j})\to \widetilde{\mathcal G}^{I}(\v_0),\qquad 
 \inf_{W^{1,p}(\om,\mathbb R^3)}\widetilde{\mathcal G}^{I}_{h_j}
 \to\min_{W^{1,p}(\om,\mathbb R^3)}\widetilde{\mathcal G}^I.
\]
Moreover,
$\v_0+\overline\v$ is the unique minimizer of $\overline{\mathcal G}^I$ over $W^{1,p}(\om,\mathbb R^3)$ and   $\widetilde{\mathcal G}^{I}(\v_0)=\overline{\mathcal G}^I(\v_0+\overline\v)$.
\end{corollary}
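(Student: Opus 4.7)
The plan is to deduce Corollary \ref{nonhom} from Theorem \ref{mainth1} by observing that $\widetilde{\mathcal G}_h^I$ and $\widetilde{\mathcal G}^I$ differ from $\mathcal G_h^I$ and $\mathcal G^I$ by the sum of a weakly continuous linear functional on $W^{1,p}(\om,\R^3)$ and a fixed real constant. Since $\overline\v\in W^{1,\infty}(\om,\R^3)$ and $|D^2\mathcal W(x,\mathbf I)|\le K$ a.e. by assumption \eqref{reg}, the matrix field $\mathbb E(\overline\v)D^2\mathcal W(x,\mathbf I)$ belongs to $L^\infty(\om,\R^{3\times 3})$, so that
\[
L(\v):=\int_\om \mathbb E(\overline\v)\,D^2\mathcal W(x,\mathbf I)\,\mathbb E(\v)\,dx
\]
is a bounded linear form on $W^{1,p}(\om,\R^3)$, hence weakly continuous. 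Moreover the additive constant $\overline{\mathcal G}^I(\overline\v)=\mathcal F^I(\overline\v)$ is a finite real number, since $\overline\v$ is divergence-free and trivially matches its own boundary datum on $\Gamma$.

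I would next check that the compactness, $\Gamma$-$\liminf$, and recovery sequence arguments of Theorem \ref{mainth1} transfer to $\widetilde{\mathcal G}_{h_j}^I$ essentially for free. If $\widetilde{\mathcal G}_{h_j}^I(\v_j)\le C$, then $\mathcal G_{h_j}^I(\v_j)\le C+C'\|\v_j\|_{W^{1,p}}$, and the linear excess is absorbed by Young's inequality into the coercive superlinear part coming from \eqref{coerc} and the rigidity-type estimate exploited in Theorem \ref{mainth1} under the Dirichlet condition on $\Gamma$; this yields a uniform $W^{1,p}$ bound. Along a weakly convergent subsequence $\v_j\wconv\v_0$, the liminf inequality for $\widetilde{\mathcal G}^I$ follows by adding the weakly continuous $L$ and the constant $\overline{\mathcal G}^I(\overline\v)$ to the liminf inequality for $\mathcal G^I$, while the recovery sequences built for $\mathcal G^I$ in Theorem \ref{mainth1} (which vanish on $\Gamma$ and converge in energy) serve directly as recovery sequences for $\widetilde{\mathcal G}^I$, again by weak continuity of $L$. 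Uniqueness of $\v_0$ follows as in Theorem \ref{mainth1}, since $\widetilde{\mathcal G}^I$ differs from the strictly convex $\mathcal G^I$ only by a linear term and a constant.

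The remaining assertion is an algebraic identification. Using the symmetry of the quadratic form and expanding, for every admissible $\v$ (i.e., $\v\in H^1_{\mathrm{div}}(\om,\R^3)$ with $\v=0$ on $\Gamma$) one has
\[
\tfrac12\!\int_\om\!\mathbb E(\v{+}\overline\v) D^2\mathcal W(x,\mathbf I)\mathbb E(\v{+}\overline\v)\,dx=\tfrac12\!\int_\om\!\mathbb E(\v) D^2\mathcal W(x,\mathbf I)\mathbb E(\v)\,dx+L(\v)+\tfrac12\!\int_\om\!\mathbb E(\overline\v) D^2\mathcal W(x,\mathbf I)\mathbb E(\overline\v)\,dx.
\]
Combining this with $\mathcal L(\v+\overline\v)=\mathcal L(\v)+\mathcal L(\overline\v)$ yields $\widetilde{\mathcal G}^I(\v)=\mathcal F^I(\v+\overline\v)=\overline{\mathcal G}^I(\v+\overline\v)$, the last equality holding since $\dv(\v+\overline\v)=0$ and $\v+\overline\v=\overline\v$ on $\Gamma$. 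The affine bijection $\v\mapsto\v+\overline\v$ between $\{\v\in W^{1,p}:\v=0\text{ on }\Gamma\}$ and $\{\w\in W^{1,p}:\w=\overline\v\text{ on }\Gamma\}$, which preserves divergence-freeness because $\dv\overline\v=0$, then identifies minimizers and minimum values and gives the last sentence of the statement. No serious obstacle is anticipated; the only delicate point is the weak continuity of $L$, which rests precisely on the $L^\infty$ bounds on $\mathbb E(\overline\v)$ and $D^2\mathcal W(\cdot,\mathbf I)$ recorded in \eqref{reg} and in the hypothesis $\overline\v\in W^{1,\infty}$.
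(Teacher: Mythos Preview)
Your proposal is correct and follows essentially the same route as the paper: both observe that $\widetilde{\mathcal G}_{h}^I$ and $\widetilde{\mathcal G}^I$ differ from $\mathcal G_{h}^I$ and $\mathcal G^I$ by the weakly continuous linear form $L(\v)=\int_\om\mathbb E(\overline\v)D^2\mathcal W(x,\mathbf I)\mathbb E(\v)\,dx$ plus a finite constant, then transfer the $\Gamma$-convergence and compactness arguments of Theorem~\ref{mainth1} verbatim. Your treatment of the last assertion is in fact slightly more direct than the paper's Lemma~\ref{GbarvsGtilde}: you simply expand the quadratic form to obtain the identity $\widetilde{\mathcal G}^I(\v)=\overline{\mathcal G}^I(\v+\overline\v)$ for all admissible $\v$, whereas the paper reaches the same conclusion through a minimality comparison; both arguments are valid and rest on the same square-completion computation.
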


\section{Preliminary results on vector potentials}\label{Sectionpotential}
%\section{Preliminary results}\label{sectionprelresults}

Given a divergence-free deformation field $\mathbf v$, it will often be useful to work with a vector potential $\mathbf w$, such that $\mathrm{curl}\mathbf w=\mathbf v$. The next results gather several properties of vector potentials of deformation fields satisfying suitable Dirichlet boundary conditions. We start by recalling a result that is found for instance in  \cite{ABDG}.

\begin{lemma}[$\mbox{\cite[Corollary 2.15]{ABDG}}$]\label{divrot}  Assume \eqref{OMEGA}  and let $\w\in L^{2}(\om,\mathbb R^{3})$ be such that
$\mathrm{curl}\ \w\in H^ {1}(\om,\mathbb R^{3})$,  $\mathrm{div}\ \w\in H^ {1}(\om,\mathbb R^{3})$, and $ \v\cdot \n\in H^{3/2}(\partial\om)$. Then $\w\in H^ {2}(\om,\mathbb R^{3})$.
\end{lemma}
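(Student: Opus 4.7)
The plan is to prove this regularity result by reducing to the elliptic regularity of scalar and vector potentials via a Helmholtz-type decomposition, and then bootstrapping the order of regularity by one.

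First, I would introduce a scalar potential $\varphi$ solving the Neumann problem
\begin{equation*}
\Delta \varphi = \dv\w \quad \text{in } \om, \qquad \partial_{\n}\varphi = \w\cdot\n \quad \text{on } \partial\om,
\end{equation*}
which is well posed since the compatibility condition $\int_\om \dv\w\,dx=\int_{\partial\om}\w\cdot\n\,d\mathcal H^{2}$ follows from integration by parts (the traces being well defined because $\w\in L^2$ has $\dv\w\in L^2$). Since the datum $\dv\w$ lies in $H^1(\om)$, the Neumann datum $\w\cdot\n$ lies in $H^{3/2}(\partial\om)$, and $\partial\om$ is of class $C^3$ by \eqref{OMEGA}, standard elliptic regularity gives $\varphi\in H^3(\om)$, so that $\nabla\varphi\in H^2(\om,\R^3)$.

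Next, set $\uu:=\w-\nabla\varphi$. By construction $\dv\uu=0$ in $\om$, $\uu\cdot\n=0$ on $\partial\om$, and $\curl\uu=\curl\w\in H^1(\om,\R^3)$, so it remains to establish $\uu\in H^2(\om,\R^3)$. Here I would exploit the topology of $\om$: since $\om$ is simply connected with connected $C^3$ boundary, the field $\uu$ admits a vector potential $\boldsymbol{\psi}\in H^1(\om,\R^3)$ with $\curl\boldsymbol{\psi}=\uu$, together with the gauge conditions $\dv\boldsymbol{\psi}=0$ in $\om$ and $\boldsymbol{\psi}\wedge\n=0$ on $\partial\om$ (this is essentially the first-order version of the lemma applied to $\uu$, or equivalently the classical construction of vector potentials under the assumed topology). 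Then the vector identity $-\Delta\boldsymbol{\psi}=\curl(\curl\boldsymbol{\psi})-\nabla(\dv\boldsymbol{\psi})=\curl\uu\in H^1(\om,\R^3)$ combined with the boundary conditions $\dv\boldsymbol{\psi}=0$ and $\boldsymbol{\psi}\wedge\n=0$ yields an elliptic boundary value problem for $\boldsymbol{\psi}$. A standard regularity argument (either by localization, flattening of the boundary, and reduction to half-space estimates, or by appealing to the theory of the Stokes-type system with these mixed conditions) provides $\boldsymbol{\psi}\in H^3(\om,\R^3)$, and therefore $\uu=\curl\boldsymbol{\psi}\in H^2(\om,\R^3)$, which gives $\w=\uu+\nabla\varphi\in H^2(\om,\R^3)$ as desired.

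The main obstacle I anticipate lies in the last step, namely the $H^3$ regularity of the vector potential $\boldsymbol{\psi}$. The boundary conditions $\boldsymbol{\psi}\wedge\n=0$ together with $\dv\boldsymbol{\psi}=0$ are non-standard (they are neither pure Dirichlet nor pure Neumann) and they couple the three components through the curved geometry of $\partial\om$; this is precisely where the $C^3$ hypothesis on $\partial\om$ enters in an essential way, allowing one second-order bootstrap beyond the first-order regularity theory. The rest of the argument is a fairly routine manipulation of the div-curl system.
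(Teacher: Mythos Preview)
The paper does not prove this lemma at all: it is simply quoted from \cite[Corollary 2.15]{ABDG} and used as a black box. So there is no ``paper's own proof'' to compare against.

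That said, your sketch is essentially the strategy behind the cited result. The Helmholtz splitting $\w=\nabla\varphi+\uu$ via the Neumann problem, followed by a vector potential $\boldsymbol{\psi}$ for the divergence-free part with the tangential gauge $\boldsymbol{\psi}\wedge\n=0$, is exactly the decomposition used in \cite{ABDG}; the identity $-\Delta\boldsymbol{\psi}=\curl\uu\in H^1$ together with the complementing boundary conditions $(\boldsymbol{\psi}\wedge\n=0,\ \dv\boldsymbol{\psi}=0)$ is precisely what drives the bootstrap there. Two small points worth tightening: first, to construct $\boldsymbol{\psi}$ you implicitly need $\uu\in H^1$, i.e.\ the \emph{first-order} version of the same lemma (curl, div in $L^2$ and $\uu\cdot\n\in H^{1/2}$ imply $\uu\in H^1$); make sure you cite that separately rather than invoking the statement you are proving. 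Second, the $H^3$ regularity of $\boldsymbol{\psi}$ under the tangential/divergence boundary conditions is not ``standard'' Dirichlet or Neumann theory and is in fact the heart of the matter in \cite{ABDG}; your honest flag that this is the main obstacle is well placed, and in a self-contained write-up this step would require either the Agmon--Douglis--Nirenberg complementing-condition machinery or the specific estimates developed in \cite{ABDG}.
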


\begin{lemma}\lab{curl}Assume \eqref{OMEGA} and let $\v\in H^ {1}(\om,\mathbb R^{3})$ be such that $\mathrm{div}\, \v= 0$ a.e. in $\om$. Then there exists $\w\in H^ {2}(\om,\mathbb R^{3})$ such that  $\v=\curl\w,\ \dv\w=0$ a.e. in $\om$ and $\w\cdot\n=0$ on $\partial\om$.
\end{lemma}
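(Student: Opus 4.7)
The natural plan is to construct $\w$ in two stages: first produce an $H^1$ vector potential with the desired divergence and normal-trace constraints, then bootstrap the regularity to $H^2$ by appealing to Lemma \ref{divrot}.

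For the first stage, I would invoke the classical theory of vector potentials in simply connected domains. Under hypothesis \eqref{OMEGA}, $\om$ is bounded, simply connected, with connected $C^3$ boundary, and since $\v\in H^1(\om,\mathbb R^3)\subset L^2(\om,\mathbb R^3)$ is divergence-free, the divergence theorem provides the compatibility condition $\int_{\partial\om}\v\cdot\n\,d\mathcal H^2=0$. Standard existence results in the spirit of \cite{ABDG} (relying on simple connectedness to rule out non-trivial harmonic Neumann fields) then yield a $\w\in H^1(\om,\mathbb R^3)$ such that $\curl\w=\v$ and $\dv\w=0$ a.e.\ in $\om$ with $\w\cdot\n=0$ on $\partial\om$. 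A self-contained alternative would be to minimize the quadratic functional $\int_\om(|\curl\boldsymbol\psi|^2+|\dv\boldsymbol\psi|^2)\,dx-2\int_\om\v\cdot\curl\boldsymbol\psi\,dx$ over the space $\{\boldsymbol\psi\in H^1(\om,\mathbb R^3)\colon\boldsymbol\psi\cdot\n=0\text{ on }\partial\om\}$; coercivity follows from a Friedrichs-type inequality that uses simple connectedness of $\om$, while the Euler--Lagrange conditions give $\curl(\curl\w-\v)=0$ and $\dv\w=0$, and topological triviality promotes the first identity to $\curl\w=\v$ (possibly after adjustment by a gradient, which leaves the gauge conditions intact).

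For the second stage, once such a $\w$ is in hand, the hypotheses of Lemma \ref{divrot} are directly met: $\w\in L^2(\om,\mathbb R^3)$, $\curl\w=\v\in H^1(\om,\mathbb R^3)$, $\dv\w\equiv0\in H^1(\om)$, and $\w\cdot\n\equiv0\in H^{3/2}(\partial\om)$. Lemma \ref{divrot} then promotes $\w$ to $H^2(\om,\mathbb R^3)$, which concludes the proof.

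The substantive point is the $H^1$-existence in the first stage, which genuinely relies on the topological assumption \eqref{OMEGA} (without simple connectedness one would need to subtract off harmonic Neumann fields, and without connectedness of $\partial\om$ one would need additional flux compatibility conditions on each component). In the present setting both of these difficulties are absent, so the main obstacle reduces to quoting the appropriate classical result; the $H^2$ upgrade is then automatic.
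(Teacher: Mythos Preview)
Your proposal is correct and follows essentially the same two-stage route as the paper: produce an $H^1$ divergence-free vector potential with vanishing normal trace, then upgrade to $H^2$ via Lemma~\ref{divrot}. The only difference is that the paper makes the first stage explicit---first obtaining an unconstrained $H^1$ potential $\z$ from \cite[Lemma~3.5]{ABDG} (using connectedness of $\partial\om$), then correcting the normal trace by subtracting $\nabla\varphi$ where $\varphi$ solves the Neumann problem $\Delta\varphi=0$, $\partial_{\n}\varphi=\z\cdot\n$---whereas you package this as a single quotation or a variational argument.
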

\begin{proof} Since $\partial\om$ is connected then by  \cite[Lemma 3.5]{ABDG} there exists $\z\in H^ {1}(\om,\mathbb R^{3})$ such that  $\v= \curl \z,\ \dv \z=0$ a.e. in $\om$. By taking into account that
\begin{equation*}\displaystyle
\z\cdot \n\in H^{1/2}(\partial\om),\ \  \int_{\partial\om}\z\cdot \n\,d{\mathcal H}^{2}=\int_{\om} \dv \z\,dx=0,
\end{equation*}
there exists $\varphi\in H^{1}(\om)$ such that
\begin{equation*}
\left\{\begin{array}{ll}& \Delta\varphi=0\ \hbox{in}\ \om\\
&\\
&\displaystyle\frac{\partial\varphi}{\partial\n}=\z\cdot\n\ \hbox{in}\ \partial\om.\\
\end{array}\right.
\end{equation*}
By setting $\w:=\z-\nabla \varphi$ we get  $\w\cdot\n= 0$ on $\partial\om$, $\dv\w=0, \ \curl\w=\v\in H^ {1}(\om,\mathbb R^{3})$ a.e. in $\om$ and hence Lemma \ref{divrot} yields $\w\in H^ {2}(\om,\mathbb R^{3})$.
\end{proof}

It is well known that for a function $\boldsymbol\zeta\in H^1_0(\Omega,\mathbb R^3)$ there holds
\[
\int_\Omega |\nabla\boldsymbol\zeta|^2\,dx=\int_\Omega|\curl\boldsymbol\zeta|^2\,dx+\int_\Omega|\dv\boldsymbol\zeta|^2\,dx.
\] 
In presence of boundary values we have the following formula that we borrow from \cite{G}.

\begin{lemma}[$\mbox{\cite[Theorem 3.1.1.1]{G}}$] \lab{grisv} Assume \eqref{OMEGA} and let $\boldsymbol\zeta\in H^1(\om,\mathbb R^{3})$. Then 
\begin{equation*}\begin{aligned}\lab{grisvard} 
 \int_\om|\nabla\boldsymbol\zeta|^{2}\,dx&=\int_{\om}|\curl \boldsymbol\zeta|^{2}\,dx+\int_\Omega |\dv \boldsymbol\zeta|^{2}\,dx+ 2\left \langle\nabla(\boldsymbol\zeta\cdot \n)\wedge \n, \boldsymbol\zeta\wedge\n\right \rangle_{\partial\om}
\\
&\qquad-\int_{\partial\om}\left\{\dv \n\,(\boldsymbol\zeta\cdot\n)^2+(\boldsymbol\zeta\wedge\n)^T\nabla\n(\boldsymbol\zeta\wedge\n)\right\}\,d\mathcal H^2
\end{aligned}\end{equation*}
where $\langle\cdot,\cdot\rangle_{\partial\om}$ denotes the duality between $H^{-1/2}(\partial\om)$ and $H^{1/2}(\partial\om)$.
\end{lemma}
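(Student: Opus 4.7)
The plan is to first establish the identity for $\boldsymbol\zeta\in C^2(\overline\Omega,\mathbb R^3)$ and then pass to the limit by density, since $\partial\Omega$ is $C^3$. For smooth $\boldsymbol\zeta$, start from the pointwise algebraic identity
\begin{equation*}
|\nabla\boldsymbol\zeta|^2-|\curl\boldsymbol\zeta|^2-(\dv\boldsymbol\zeta)^2=\sum_{i,j}\bigl(\partial_i\zeta_j\,\partial_j\zeta_i-\partial_i\zeta_i\,\partial_j\zeta_j\bigr),
\end{equation*}
which follows from expanding $|\curl\boldsymbol\zeta|^2=\tfrac12\sum_{i,j}(\partial_i\zeta_j-\partial_j\zeta_i)^2$. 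A direct calculation shows that the right-hand side equals $\dv\!\bigl((\boldsymbol\zeta\cdot\nabla)\boldsymbol\zeta-(\dv\boldsymbol\zeta)\boldsymbol\zeta\bigr)$, since mixed partials commute and cancel. The divergence theorem then gives the bulk decomposition plus a boundary term
\begin{equation*}
\int_{\partial\Omega}\bigl\{\bigl((\boldsymbol\zeta\cdot\nabla)\boldsymbol\zeta\bigr)\cdot\n-(\dv\boldsymbol\zeta)(\boldsymbol\zeta\cdot\n)\bigr\}\,d\mathcal H^2,
\end{equation*}
so the whole problem reduces to rewriting this boundary integrand in the form asserted by the lemma.

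The main step is the tangential-calculus manipulation of this boundary integrand. I would split $\boldsymbol\zeta=(\boldsymbol\zeta\cdot\n)\n+\boldsymbol\zeta_\tau$ with $\boldsymbol\zeta_\tau=\n\wedge(\boldsymbol\zeta\wedge\n)$, and decompose $\nabla\boldsymbol\zeta$ on $\partial\Omega$ into its normal derivative and its tangential gradient. The Weingarten map $\nabla\n|_{\partial\Omega}$, whose trace is $\dv\n$ (mean curvature) and which annihilates $\n$, enters through identities such as $\partial_\tau\n\cdot\n=0$ and $(\boldsymbol\zeta_\tau\cdot\nabla)\n=\nabla\n\cdot\boldsymbol\zeta_\tau$. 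After expanding $(\boldsymbol\zeta\cdot\nabla)\boldsymbol\zeta\cdot\n$ using the product rule on $\boldsymbol\zeta=(\boldsymbol\zeta\cdot\n)\n+\boldsymbol\zeta_\tau$ and integrating by parts along $\partial\Omega$ to move tangential derivatives off $\boldsymbol\zeta\cdot\n$, the purely normal quadratic terms combine into $-\dv\n\,(\boldsymbol\zeta\cdot\n)^2$, the purely tangential terms into $-(\boldsymbol\zeta\wedge\n)^T\nabla\n(\boldsymbol\zeta\wedge\n)$, and the mixed terms produce the duality pairing $2\langle\nabla(\boldsymbol\zeta\cdot\n)\wedge\n,\boldsymbol\zeta\wedge\n\rangle_{\partial\Omega}$. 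This pointwise-on-$\partial\Omega$ bookkeeping is the truly delicate part, because one must keep track of signs and use that tangential integration by parts on the closed manifold $\partial\Omega$ produces no extra edge terms.

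Finally, I would extend the identity from $C^2(\overline\Omega,\mathbb R^3)$ to $H^1(\Omega,\mathbb R^3)$ by density: the trace maps $\boldsymbol\zeta\mapsto\boldsymbol\zeta\cdot\n$ and $\boldsymbol\zeta\mapsto\boldsymbol\zeta\wedge\n$ are continuous into $H^{1/2}(\partial\Omega)$, so the surface integral featuring the $C^1$ quantities $\n$ and $\nabla\n$ passes to the limit. The tangential gradient $\nabla(\boldsymbol\zeta\cdot\n)\wedge\n$ is well-defined in $H^{-1/2}(\partial\Omega,\mathbb R^3)$ for $\boldsymbol\zeta\cdot\n\in H^{1/2}(\partial\Omega)$, and the duality pairing on $\partial\Omega$ is continuous on the product $H^{-1/2}\times H^{1/2}$; both bulk $L^2$ norms are obviously continuous in the $H^1$ topology. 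The main obstacle is the geometric computation in the middle step: the interplay between the Weingarten map and the tangential integration by parts has to be carried out with enough care that the boundary pieces reassemble exactly into the form stated, including the duality pairing with the correct factor of two.
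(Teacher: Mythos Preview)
The paper does not prove this lemma: it is stated with a citation to Grisvard's book \cite[Theorem 3.1.1.1]{G} and used as a black box. Your outline is the standard route to this identity and is essentially the argument Grisvard gives, so there is nothing to compare against in the paper itself; your sketch is sound, with the only genuinely laborious step being, as you note, the tangential-calculus bookkeeping on $\partial\Omega$.
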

\begin{lemma}\lab{convforte} Assume \eqref{OMEGA} and \eqref{Gamma} %let $\Gamma$ be a relatively closed  subset of $\partial\om$  such that $\mathcal H^2(\Gamma)>0$.
 Let $(\boldsymbol\zeta_h)_{h\in\mathbb N}\subset H^1(\om,\mathbb R^3)$ be a sequence such that $\boldsymbol \zeta_h\cdot\n=0$ on $\Gamma$, $\boldsymbol \zeta_h\wedge\n=0$ on $\partial\om\setminus\Gamma$, $\boldsymbol \zeta_h\wconv 0$ weakly in $H^1(\om,\mathbb R^3)$, $\curl\ \boldsymbol\zeta_h\to 0$ in $L^2(\om,\mathbb R^3)$ and  $\dv\ \boldsymbol\zeta_h\to 0$ in $L^2(\om)$. Then $\boldsymbol\zeta_h\to 0$ strongly in $H^1(\om,\mathbb R^3)$.
 \end{lemma}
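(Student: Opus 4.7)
The strategy is to apply the identity of Lemma \ref{grisv} to $\boldsymbol\zeta_h$ and drive each term on its right-hand side to zero; combined with the Rellich--Kondrachov compactness $H^1(\om)\hookrightarrow L^2(\om)$, this gives strong $H^1$-convergence $\boldsymbol\zeta_h\to 0$.

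The two volume integrals $\int_\om|\curl\boldsymbol\zeta_h|^2\,dx$ and $\int_\om|\dv\boldsymbol\zeta_h|^2\,dx$ vanish by hypothesis. For the surface integral $\int_{\partial\om}\{\dv\n\,(\boldsymbol\zeta_h\cdot\n)^2+(\boldsymbol\zeta_h\wedge\n)^T\nabla\n(\boldsymbol\zeta_h\wedge\n)\}\,d\mathcal H^2$, the compactness of the trace $H^1(\om)\to L^2(\partial\om)$ (which follows from the compact inclusion $H^{1/2}(\partial\om)\hookrightarrow L^2(\partial\om)$ on the smooth compact surface $\partial\om$) combined with $\boldsymbol\zeta_h\wconv 0$ in $H^1$ yields $\boldsymbol\zeta_h\to 0$ strongly in $L^2(\partial\om)$, and since $\n\in C^2(\partial\om)$ by \eqref{OMEGA} both $\dv\n$ and $\nabla\n$ are bounded, so this term vanishes as well.

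The main obstacle is the duality pairing $2\langle\nabla(\boldsymbol\zeta_h\cdot\n)\wedge\n,\boldsymbol\zeta_h\wedge\n\rangle_{\partial\om}$, which I plan to show is identically zero for each $h$. By the Lagrange identity $(A\wedge B)\cdot(C\wedge D)=(A\cdot C)(B\cdot D)-(A\cdot D)(B\cdot C)$, together with $|\n|=1$ and the tangentiality $\nabla_T(\boldsymbol\zeta_h\cdot\n)\cdot\n=0$ of the surface gradient, the integrand rewrites as $\nabla_T(\boldsymbol\zeta_h\cdot\n)\cdot\boldsymbol\zeta_h^T$, where $\boldsymbol\zeta_h^T:=\boldsymbol\zeta_h-(\boldsymbol\zeta_h\cdot\n)\n$ denotes the tangential component of $\boldsymbol\zeta_h$. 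The mixed boundary conditions then force this expression to vanish $\mathcal H^2$-a.e.\ on $\partial\om$: on $\Gamma$ the scalar $\boldsymbol\zeta_h\cdot\n$ is identically zero, so its tangential gradient vanishes there; on $\partial\om\setminus\Gamma$ the condition $\boldsymbol\zeta_h\wedge\n=0$ forces $\boldsymbol\zeta_h^T=0$; and $\partial\Gamma$ has zero $\mathcal H^2$-measure. The delicate point is that a priori the pairing is only an $H^{-1/2}\times H^{1/2}$ duality on $\partial\om$, not a classical integral, so the pointwise vanishing must be upgraded to vanishing of the pairing itself. I would do this by approximating $\boldsymbol\zeta_h$ with smooth vector fields satisfying the same mixed boundary conditions---an approximation justified by the $C^3$-regularity of $\partial\om$ and $\partial\Gamma$ in \eqref{OMEGA}--\eqref{Gamma}---for which the pairing becomes a classical surface integral and vanishes by the argument above, and then passing to the limit.
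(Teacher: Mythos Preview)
Your proof is correct and follows essentially the same approach as the paper: apply Lemma~\ref{grisv}, kill the volume terms by hypothesis, kill the quadratic surface integral by trace compactness, and show the duality term vanishes by the mixed boundary conditions. The paper simply asserts $\langle\nabla(\boldsymbol\zeta_h\cdot\n)\wedge\n,\boldsymbol\zeta_h\wedge\n\rangle_{\partial\om}=0$ from $\boldsymbol\zeta_h\cdot\n=0$ on $\Gamma$ and $\boldsymbol\zeta_h\wedge\n=0$ on $\partial\om\setminus\Gamma$, without further detail; your Lagrange-identity rewriting and density argument actually supply the justification the paper omits. For the $L^2$-part the paper invokes the Poincar\'e-type inequality $\|\boldsymbol\zeta_h\|_{L^2(\om)}^2\le C(\|\nabla\boldsymbol\zeta_h\|_{L^2(\om)}^2+\|\boldsymbol\zeta_h\|_{L^2(\partial\om)}^2)$ rather than Rellich--Kondrachov, but either route works.
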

 \begin{proof} Since $\boldsymbol\zeta_h\cdot\n=0$ on $\Gamma$ and $\boldsymbol\zeta_h\wedge\n=0$ on $\partial\om\setminus\Gamma$ we get
\begin{equation*}
\left \langle\nabla(\boldsymbol\zeta\cdot \n)\wedge \n, \boldsymbol\zeta\wedge\n\right \rangle_{\partial\om}=0
\end{equation*}
for any $h\in\mathbb N$
and then Lemma \ref{grisv} yields 
\begin{equation*}\lab{grisvard2} 
\displaystyle \int_\om|\nabla\boldsymbol\zeta_h|^{2}\,dx= A_h+B_h,
\end{equation*}
where
\[
A_h:=\int_{\om}|\curl \boldsymbol\zeta_h|^{2}\,dx+\int_\Omega |\dv \boldsymbol\zeta_h|^{2}\,dx,
\]
\[
B_h:=-\int_{\partial\om}\left\{\dv \n\,(\boldsymbol\zeta_h\cdot\n)^2+(\boldsymbol\zeta_h\wedge\n)^T\nabla\n(\boldsymbol\zeta_h\wedge\n)\right\}\,d\mathcal H^2.
\]
By taking into account that $\boldsymbol \zeta_h\wconv 0$ weakly in $H^1(\om,\mathbb R^3)$, we get $\boldsymbol\zeta_h\to 0$ in $L^2(\partial\om,\mathbb R^3)$, hence $B_h\to 0$. Moreover,  since both
$\mathrm{curl}\ \boldsymbol\zeta_h$ and  $\mathrm{div}\ \zeta_h$ go to zero in $L^2(\om)$, we get also $A_h\to 0$. We conclude that $\|\nabla\boldsymbol\zeta_h\|_{L^2(\om)}\to 0$ and
%\begin{equation*}
%\int_\om|\nabla\zeta_h|^2\,dx\to 0
%\end{equation*}
the result follows  from the Poincar\'e inequality $\|\boldsymbol \zeta_h\|^2_{L^2(\om)}\le C\, (\|\nabla\boldsymbol \zeta_h\|^2_{L^2(\om)}+\|\boldsymbol\zeta_h\|^2_{L^2(\partial\om)})$.
\end{proof}

%\MMM qui forse non serve introdurre $\v$, basta introdurre $\w\in L^2(\partial\om,\mathbb R^3)$ per definire $\bb\Phi_\w$, e le ipotesi su $\w$ sono poi precisate nei lemmi \KKK

%Let now $\v\in H^1(\om;\mathbb R^3)$ such that $\hbox{div}\ \v=0$ in $\om$ and $\v=0$ on $\Gamma$. By Lemma \ref{curl} there exists $\w\in H^2(\om;\mathbb R^3)$ such that  $\v= \hbox{curl} \w,\ \hbox{div} \w=0$ a.e. in $\om$ and $\w\cdot\n=0$ on $\partial\om$.\\
 Always assuming \eqref{OMEGA} and $\eqref{Gamma}$, %and letting $\Gamma$ be a relatively closed connected subset of $\partial\om$  such that $\mathcal H^2(\Gamma)>0$, 
 we set
\begin{equation*}
 X(\Gamma):=\{\boldsymbol \zeta\in H^1(\om,\mathbb R^3):\, \boldsymbol\zeta\cdot\n=0\ \hbox{ on}\  \Gamma, \ \boldsymbol\zeta\wedge\n=0\ \hbox{ on}\ \partial\om\setminus\Gamma\},
 \end{equation*}
\[
 X_0(\Gamma):=\{\boldsymbol\zeta\in X(\Gamma): \ \mathrm{div}\ \boldsymbol\zeta=0\ \hbox{in}\ \om\}.
\]
Lemma \ref{curl} ensures that there exist vector fields $\w\in H^2(\om,\mathbb R^3)$  such that  $ \mathrm{div}\, \w=0$ a.e. in $\om$,  $\w\cdot\n=0$ on $\partial\om$ and $\mathrm{curl} \w\cdot\n=0$ on $\Gamma$. %The existence of $\w$ with these properties is ensured by Lemma \ref{curl}. 
Given $\w$ with such properties,
 for  $\boldsymbol\zeta\in X(\Gamma)$ we define the functional
\begin{equation*}\label{G}
\Phi_{\w}(\boldsymbol\zeta):=\frac{1}{2}\int_{\om}\left (|\hbox{curl} \boldsymbol\zeta|^{2}+|\hbox{div} \boldsymbol\zeta|^{2}\right )\,dx-\int_{\partial\om}(\w\wedge\boldsymbol\zeta)\cdot\n\,d\mathcal H^2.
\end{equation*}
We prove the following
\begin{lemma}\lab{minG} Assume \eqref{OMEGA} and \eqref{Gamma} %let $\Gamma$ be a relatively closed  subset of $\partial\om$  such that $\mathcal H^2(\Gamma)>0$. 
Let $\w\in H^2(\om,\mathbb R^3)$ be such that  $ \mathrm{div}\, \w=0$ a.e. in $\om$,  $\w\cdot\n=0$ on $\partial\om$ and $\mathrm{curl} \w\cdot\n=0$ on $\Gamma$.  Then  the functional $\Phi_\w$ has  minimizers both on $X(\Gamma)$ and $X_0(\Gamma)$. Moreover, 
\begin{equation}\label{minmin}
\displaystyle\min_{X(\Gamma)} \Phi_\w= \min_{X_0(\Gamma)} \Phi_\w.
\end{equation}
\end{lemma}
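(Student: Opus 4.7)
The plan is to exploit a Helmholtz-type decomposition compatible with the mixed boundary conditions defining $X(\Gamma)$ so as to reduce the existence question on $X(\Gamma)$ to one on $X_0(\Gamma)$, where the divergence term disappears and coercivity is cleaner. Concretely, I would first show that every $\boldsymbol\zeta\in X(\Gamma)$ admits a projection $\boldsymbol\zeta_0\in X_0(\Gamma)$ with $\Phi_\w(\boldsymbol\zeta)\ge\Phi_\w(\boldsymbol\zeta_0)$ — which gives \eqref{minmin} together with the trivial reverse inequality $X_0(\Gamma)\subset X(\Gamma)$ — and then produce the minimizer by the direct method on $X_0(\Gamma)$.

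For the decomposition, given $\boldsymbol\zeta\in X(\Gamma)$ I would solve the mixed boundary value problem
\[
\Delta\psi=\dv\,\boldsymbol\zeta\ \ \text{in}\ \om,\qquad \frac{\partial\psi}{\partial\n}=0\ \ \text{on}\ \Gamma,\qquad \psi=0\ \ \text{on}\ \partial\om\setminus\Gamma,
\]
which is coercive on $\{\varphi\in H^1(\om):\varphi=0\ \text{on}\ \partial\om\setminus\Gamma\}$; in the pure Neumann case $\Gamma=\partial\om$ the compatibility $\int_\om\dv\,\boldsymbol\zeta=\int_\Gamma \boldsymbol\zeta\cdot\n=0$ is automatic from $\boldsymbol\zeta\in X(\Gamma)$. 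By construction $\nabla\psi\in X(\Gamma)$ (the Dirichlet piece forces the tangential gradient, hence $\nabla\psi\wedge\n$, to vanish on $\partial\om\setminus\Gamma$), so $\boldsymbol\zeta_0:=\boldsymbol\zeta-\nabla\psi\in X_0(\Gamma)$ with $\curl\,\boldsymbol\zeta_0=\curl\,\boldsymbol\zeta$ and $\dv\,\boldsymbol\zeta=\Delta\psi$. The decisive computation uses the identity $\dv(\w\wedge\nabla\psi)=\nabla\psi\cdot\curl\,\w$ (because $\curl\,\nabla\psi=0$) together with $\dv\,\curl\,\w=0$: integrating by parts twice,
\[
\int_{\partial\om}(\w\wedge\nabla\psi)\cdot\n\,d\mathcal H^2=\int_\om\nabla\psi\cdot\curl\,\w\,dx=\int_{\partial\om}\psi\,(\curl\,\w\cdot\n)\,d\mathcal H^2=0,
\]
thanks to $\curl\,\w\cdot\n=0$ on $\Gamma$ (hypothesis) and $\psi=0$ on $\partial\om\setminus\Gamma$ (construction). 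Expanding $\Phi_\w(\boldsymbol\zeta_0+\nabla\psi)$ and using the above identities for curl and divergence then yields
\[
\Phi_\w(\boldsymbol\zeta)=\Phi_\w(\boldsymbol\zeta_0)+\tfrac12\int_\om|\Delta\psi|^2\,dx\ \ge\ \Phi_\w(\boldsymbol\zeta_0),
\]
as required.

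For existence on $X_0(\Gamma)$, on this space the divergence integral in Lemma \ref{grisv} is identically zero and the duality term $\langle\nabla(\boldsymbol\zeta\cdot\n)\wedge\n,\boldsymbol\zeta\wedge\n\rangle_{\partial\om}$ vanishes exactly as in the proof of Lemma \ref{convforte}, leaving $\|\nabla\boldsymbol\zeta\|_{L^2}^2\le\|\curl\,\boldsymbol\zeta\|_{L^2}^2+C\|\boldsymbol\zeta\|_{L^2(\partial\om)}^2$. A contradiction argument — normalize a minimizing sequence by its $H^1$ norm, extract a weak $H^1$ limit $\tilde{\boldsymbol\zeta}$, apply Lemma \ref{convforte} to $\boldsymbol\zeta_h/\|\boldsymbol\zeta_h\|_{H^1}-\tilde{\boldsymbol\zeta}$ (which has weak limit $0$ and whose curl and divergence go to $0$ in $L^2$) to upgrade to strong $H^1$ convergence, and then invoke simple connectedness of $\om$ to realize $\tilde{\boldsymbol\zeta}=\nabla\phi$ with $\phi$ harmonic, $\nabla\phi\cdot\n=0$ on $\Gamma$, and $\phi$ locally constant on $\partial\om\setminus\Gamma$ — yields boundedness of minimizing sequences once such $\phi$ is shown to be constant, so that $\tilde{\boldsymbol\zeta}=0$ contradicts $\|\tilde{\boldsymbol\zeta}\|_{H^1}=1$. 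Weak lower semicontinuity of $\Phi_\w$ is standard (convex quadratic part plus a trace-continuous linear boundary functional), so the direct method delivers a minimizer on $X_0(\Gamma)$, which by the previous paragraph is also a minimizer on $X(\Gamma)$. I expect the most delicate step to be precisely this coercivity argument, specifically the potential-theoretic reduction that forces the harmonic limit $\phi$ to vanish under the mixed Neumann/locally-constant boundary data, bringing together the topological hypotheses \eqref{OMEGA}--\eqref{Gamma} with the compatibility built into $\w$.
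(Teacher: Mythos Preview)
Your proposal is correct and follows essentially the same route as the paper: the same Helmholtz-type splitting via the mixed Neumann/Dirichlet problem for $\psi$, the same integration-by-parts identity exploiting $\curl\w\cdot\n=0$ on $\Gamma$ and $\psi=0$ on $\partial\om\setminus\Gamma$, and the same coercivity-by-contradiction argument using Lemma~\ref{convforte} together with the harmonic-potential reduction $\tilde{\boldsymbol\zeta}=\nabla\phi$. The only difference is the order---the paper first proves existence directly on $X(\Gamma)$ and then projects the minimizer into $X_0(\Gamma)$, whereas you project first and run the direct method on $X_0(\Gamma)$---which is an inessential rearrangement (your explicit formula $\Phi_\w(\boldsymbol\zeta)=\Phi_\w(\boldsymbol\zeta_0)+\tfrac12\|\Delta\psi\|_{L^2}^2$ is in fact a slight sharpening of the paper's statement).
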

\begin{proof} Let $(\boldsymbol\zeta_h)\subset X(\Gamma)$ be a sequence such that $\Phi_\w(\bb\zeta_h)\to \inf_{X(\Gamma)}\Phi_\w$. We shall first prove that such a sequence is bounded in $H^1(\om,\mathbb R^3)$. Indeed, assume by contradiction that, up to subsequences, $\lambda_h:=\|\boldsymbol\zeta_h\|_{H^1(\om,\mathbb R^3)}\to +\infty$ and set $\boldsymbol\xi_h:=\lambda_h^{-1}\boldsymbol\zeta_h$. Then $\boldsymbol\xi_h\in X(\Gamma), \|\boldsymbol\xi_h\|_{H^1(\om,\mathbb R^3)}=1$ and { for $h$ large enough}
\begin{equation*}
{1}\ge \Phi_\w(\boldsymbol\zeta_h)= \Phi_\w(\lambda_h\boldsymbol\xi_h)=\frac{\lambda_h^2}{2}\int_{\om}\left (|\hbox{curl}\ \boldsymbol\xi_h|^{2}+|\hbox{div} \boldsymbol\xi_h|^{2}\right )\,dx-\lambda_h\int_{\partial\om}(\w\wedge\boldsymbol\xi_h)\cdot\n\,d\mathcal H^2.
\end{equation*}
Hence, by recalling that $\|\boldsymbol\xi_h\|=1$, there exists $\boldsymbol \xi^*\in X(\Gamma)$ such that, up to subsequences, $\boldsymbol\xi_h\wconv\boldsymbol \xi^*$ weakly in $H^1(\om,\mathbb R^3)$ and
\begin{equation*}
\int_{\om}\left (|\curl \boldsymbol\xi_h|^{2}+|\dv \boldsymbol\xi_h|^{2}\right )\,dx\le \frac{2}{\lambda_h}\int_{\partial\om}(\w\wedge\boldsymbol\xi_h)\cdot\n\,d\mathcal H^2  + 2\lambda_h^{-2} \to 0,
\end{equation*}
that is, $\curl \boldsymbol\xi_h\to \hbox{curl}\ \boldsymbol\xi^*=0$ in $L^2(\om,\mathbb R^3)$, $  \hbox{div}\ \boldsymbol\xi_h\to \hbox{div}\ \boldsymbol\xi^*=0$ in $L^2(\om)$ and $\boldsymbol\xi^*\in X(\Gamma)$. We claim that $\boldsymbol \xi^*=0$.
Indeed let $\boldsymbol\psi\in H^1_0(\om,\mathbb R^3)$ be such that $\hbox{div}\ \boldsymbol\psi=0$. Since $\om$ is simply connected, then by  \cite[Theorem 3.17]{ABDG} there exists $\boldsymbol\omega\in H^2(\om,\mathbb R^3)$
such that $\hbox{curl}\ \boldsymbol\omega=\boldsymbol\psi$ a.e. in $\om$, $ \hbox{div}\ \boldsymbol\omega=0$ a.e. in $\om$ and $\boldsymbol\omega\wedge\n=0$ on $\partial\om$. Therefore
\begin{equation*}\begin{aligned}
\displaystyle\int_\om \boldsymbol\xi^*\cdot\boldsymbol\psi\,dx&=\int_\om\boldsymbol\xi^*\cdot\ \hbox{curl}\ \boldsymbol\omega\,dx=\int_\om\ \hbox{div}\ (\boldsymbol\xi^*\wedge\boldsymbol\omega)\,dx\\
&=\displaystyle \int_{\partial\om} (\boldsymbol\xi^*\wedge\boldsymbol\omega)\cdot\n\,d\mathcal H^2= \int_{\partial\om} \boldsymbol\xi^*\cdot (\boldsymbol\omega\wedge\n)\,d\mathcal H^2=0.
\end{aligned}
\end{equation*}
Hence, by invoking for instance \cite[Lemma 2.1]{GR}, there exists $p\in H^2(\om)$ such that $\nabla p=\boldsymbol\xi^*$ a.e. in $\om$, so that $\Delta p=0$ in $\om$, $\textstyle \frac{\partial p}{\partial\n}=0$ on $\Gamma$ and $\nabla p\wedge\n=0$ on $\partial\om\setminus\Gamma$. In particular, $p$ is constant on $\partial\om\setminus\Gamma$ since $\n\wedge(\nabla p\wedge\n)$ is its tangential derivative, hence $p$ is constant on the whole $\om$ and $\boldsymbol\xi^*=\nabla p=0$ as claimed.
By summarizing: $\boldsymbol\xi_h\in X(\Gamma)$, $\boldsymbol\xi_h\wconv 0$ weakly in $H^1(\om,\mathbb R^3)$, $\curl \boldsymbol\xi_h\to0$ in $L^2(\om;\mathbb R^3)$, $\dv \boldsymbol\xi_h\to 0$ in $L^2(\om)$. Therefore, by Lemma \ref{convforte}, $\boldsymbol\xi_h\to 0$ strongly in $H^1(\om,\mathbb R^3)$, which is  a contradiction since we are assuming $\|\bb\xi_h\|_{H^1(\om,\mathbb R^3)}=1$ (see also similar arguments in \cite{BBGT}, \cite{BT}, \cite{MPTARMA}, \cite{MPTFvK}).

%Therefore $\|\zeta_h\|_{H^1}\le C$ for some $C> 0$ and
Since we have shown that $(\boldsymbol\zeta_h)$ is a bounded sequence in $H^1(\om,\mathbb R^3)$ and
 since $\Phi_\w$ is sequentially l.s.c. with respect to the weak convergence in $H^1(\om,\mathbb R^3)$ we get existence of $\min_{X(\Gamma)}\Phi_\w$. It remains to prove that \eqref{minmin} holds.
%\begin{equation}
%\displaystyle\min_{X(\Gamma)} G= \min_{X_0(\Gamma)} G.
%\end{equation}
Let $\boldsymbol\zeta^*\in \argmin_{X(\Gamma)} \Phi_\w$. Then $\hbox{div}\ \boldsymbol\zeta^*\in L^2(\om)$ and we let $\varphi\in H^2(\om)$ be  solution { (see for instance \cite[Theorem 3]{PL}) to the  boundary value problem}
\begin{equation*}\left\{\begin{array}{ll}
& \Delta \varphi= \ \hbox{div} \ \boldsymbol\zeta^* \ \hbox{in}\ \om\\
&\\
&\displaystyle \frac{\partial\varphi}{\partial\n}=0\ \hbox{on}\ \Gamma\\
&\\
& \varphi=0\ \hbox{on}\ \partial\om\setminus\Gamma.\\
\end{array}\right. 
\end{equation*}
It is readily seen that   $\boldsymbol\zeta^*-\nabla\varphi\in X_0(\Gamma)$ and 
\begin{equation*}
\Phi_\w(\boldsymbol\zeta^*-\nabla\varphi)= \Phi_\w(\boldsymbol\zeta^*)+\int_{\Gamma} (\w\wedge\nabla\varphi)\cdot \n\,d\mathcal H^2= \Phi_\w(\boldsymbol\zeta^*)+\int_{\partial\om} (\w\wedge\nabla\varphi)\cdot \n\,d\mathcal H^2
\end{equation*}
since, due  to $\varphi=0$ on $\partial\om\setminus\Gamma$, there holds
\begin{equation*}
\int_{\partial\om\setminus\Gamma} (\w\wedge\nabla\varphi)\cdot \n\,d\mathcal H^2=\int_{\partial\om\setminus\Gamma} \w\cdot (\nabla\varphi\wedge \n)\,d\mathcal H^2=0.
\end{equation*}
But %letting $\v:=\mathrm{curl} \w$ we have
\begin{equation*}\begin{aligned}
\displaystyle\int_{\partial\om} (\w\wedge\nabla\varphi)\cdot \n\,d\mathcal H^2&=\int_\om\hbox{ div}\ (\w\wedge\nabla\varphi)\,dx=-\int_\om \hbox{curl}\ \w\cdot\nabla\varphi\,dx\\
&\displaystyle=-\int_{\partial\om} (\hbox{curl}\ \w\cdot\n)\varphi\,d\mathcal H^2
%=-\int_{\partial\om} (\v\cdot\n)\varphi\,d\mathcal H^2
=0
\end{aligned}
\end{equation*}
since $\mathrm{curl}\,\w\cdot\n=0$ on $\Gamma$ and $\varphi=0$ on $\partial\om\setminus\Gamma$.
We conclude that 
\begin{equation*}
\Phi_\w(\boldsymbol\zeta^*-\nabla\varphi)= \Phi_\w(\boldsymbol\zeta^*),
\end{equation*}
thus proving the result.
\end{proof}

The next result is based on the Euler-Lagrange equation for functional $\Phi_\w$. 
Before its statement, we recall that the {\it reach} of a closed set $A\subset\mathbb R^3$, 
% $X\subset \mathbb R^N$ we recall that the reach  of $X$,
  introduced in \cite{F}, is defined by
\begin{equation}\lab{reach}
R(A):=\sup\{r>0:  \ 0<d(x,A)< r\ \Rightarrow\;\; \exists ! \ y\in A \mbox{ s.t. } d(x,y)=d(x,A)\},
\end{equation}
where the {\it distance function} is defined on $\mathbb R^3$ by $d(x,A):=\inf_{y\in A}|x-y|$.
It  is well-known that $R(A)> 0$ whenever $A$ is a $C^{2}$ compact $1D$ or $2D$ manifold  without boundary, see for instance \cite{S,T}.

\begin{lemma}\lab{euler}  Assume \eqref{OMEGA} and \eqref{Gamma}. % let $\Gamma$ be a relatively closed  subset of $\partial\om$  such that $\mathcal H^2(\Gamma)>0$. 
Let $\w$ as in {\rm Lemma \ref{minG}} and let $\boldsymbol\zeta_*\in \argmin_{X_0(\Gamma)}\Phi_\w$. Then we have 
\begin{equation}\lab{eulerG1}
\int_\om\curl\boldsymbol\zeta_*\cdot \mathrm{curl} \boldsymbol\varphi\,dx=0\ \quad\mbox{ $\forall\boldsymbol \varphi\in C^1_0(\Omega,\mathbb R^3)$}
\end{equation}
and 
\beeq\lab{eulerG2}
\displaystyle\langle \curl \boldsymbol\zeta_*\wedge\n, \boldsymbol \varphi\rangle_{\partial\om}=-\int_\Gamma (\w\wedge\n)\cdot\boldsymbol\varphi \, d\mathcal H^2 \quad\mbox { $\forall\boldsymbol\varphi \in C^{1}(\overline\om,\mathbb R^3)$ s.t. $\boldsymbol\varphi\equiv 0$ on $\partial\om\setminus\Gamma$. }
\end{equation}
%for every $\varphi \in C^{1}(\overline\om;\mathbb R^3), \varphi\equiv 0$ on $\partial\om\setminus\Gamma$ 
%(here $(\cdot,\cdot)$  denotes duality between $H^{-1/2}(\partial\om;\mathbb R^3)$ 
%and $H^{1/2}(\partial\om;\mathbb R^3)$).
\end{lemma}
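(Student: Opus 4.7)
The plan is to derive \eqref{eulerG1} and \eqref{eulerG2} from the first variation of $\Phi_\w$, exploiting the identity $\min_{X_0(\Gamma)}\Phi_\w=\min_{X(\Gamma)}\Phi_\w$ supplied by Lemma \ref{minG} to treat $\boldsymbol\zeta_*$ as a minimizer on the unconstrained space $X(\Gamma)$. For every $\boldsymbol\varphi\in X(\Gamma)$ the perturbation $\boldsymbol\zeta_*+t\boldsymbol\varphi$ remains in $X(\Gamma)$ for all $t\in\mathbb R$; expanding $\Phi_\w$ and imposing stationarity at $t=0$ yields
\[
\int_\om\bigl(\curl\boldsymbol\zeta_*\cdot\curl\boldsymbol\varphi+\dv\boldsymbol\zeta_*\,\dv\boldsymbol\varphi\bigr)\,dx=\int_{\partial\om}(\w\wedge\boldsymbol\varphi)\cdot\n\,d\mathcal H^2.
\]
Since $\dv\boldsymbol\zeta_*=0$ (because $\boldsymbol\zeta_*\in X_0(\Gamma)$) and $(\w\wedge\boldsymbol\varphi)\cdot\n=\w\cdot(\boldsymbol\varphi\wedge\n)$ by the cyclic identity for the scalar triple product, this reduces to
\[
\int_\om\curl\boldsymbol\zeta_*\cdot\curl\boldsymbol\varphi\,dx=\int_{\partial\om}\w\cdot(\boldsymbol\varphi\wedge\n)\,d\mathcal H^2\qquad\forall\,\boldsymbol\varphi\in X(\Gamma).
\]

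Choosing $\boldsymbol\varphi\in C^1_0(\Omega,\mathbb R^3)\subset X(\Gamma)$ makes the boundary term vanish and immediately gives \eqref{eulerG1}. In particular $\curl\curl\boldsymbol\zeta_*=0$ in $\mathcal D'(\om)$, so $\curl\boldsymbol\zeta_*\in H(\curl,\om)$ and its tangential trace $\curl\boldsymbol\zeta_*\wedge\n$ is well-defined in $H^{-1/2}(\partial\om,\mathbb R^3)$ through the Green-type identity
\[
\langle\curl\boldsymbol\zeta_*\wedge\n,\boldsymbol\psi\rangle_{\partial\om}=\int_\om\curl\boldsymbol\zeta_*\cdot\curl\boldsymbol\psi\,dx\qquad\forall\,\boldsymbol\psi\in H^1(\om,\mathbb R^3).
\]
Combining the previous two displays and using that $\boldsymbol\varphi\wedge\n=0$ on $\partial\om\setminus\Gamma$ produces \eqref{eulerG2} for every $\boldsymbol\varphi\in X(\Gamma)$.

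The main obstacle is the extension of \eqref{eulerG2} from $X(\Gamma)$ to the broader class of test fields $\boldsymbol\varphi\in C^1(\overline\om,\mathbb R^3)$ that vanish on $\partial\om\setminus\Gamma$ but may have nonzero normal component on $\Gamma$. The crucial observation is that both sides of \eqref{eulerG2} depend only on the tangential trace $\n\wedge(\boldsymbol\varphi\wedge\n)$ of $\boldsymbol\varphi$ on $\partial\om$: the right-hand side because $\w\wedge\n$ is tangent to $\partial\om$, and the left-hand side because the tangential-trace duality on $H(\curl,\om)$ annihilates purely normal boundary values. I will therefore modify $\boldsymbol\varphi$ so as to kill its normal component near $\partial\om$ without altering its tangential trace. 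Since $\partial\om$ is $C^3$, the unit normal $\n$ admits a $C^2$ extension $\tilde\n$ to a tubular neighborhood $U$ of $\partial\om$; picking $\eta\in C^\infty_c(U)$ with $\eta\equiv 1$ on $\partial\om$, I set
\[
\tilde{\boldsymbol\varphi}:=\boldsymbol\varphi-\eta\,(\boldsymbol\varphi\cdot\tilde\n)\,\tilde\n\in C^1(\overline\om,\mathbb R^3).
\]
A direct check on $\partial\om$ gives $\tilde{\boldsymbol\varphi}\cdot\n=0$ and $\tilde{\boldsymbol\varphi}\wedge\n=\boldsymbol\varphi\wedge\n$, so $\tilde{\boldsymbol\varphi}\in X(\Gamma)$ and has the same tangential trace as $\boldsymbol\varphi$. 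Applying the already established case of \eqref{eulerG2} to $\tilde{\boldsymbol\varphi}$ and invoking the tangential character of both sides to replace $\tilde{\boldsymbol\varphi}$ by $\boldsymbol\varphi$ concludes the proof.
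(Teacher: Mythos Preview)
Your proof is correct and follows essentially the same approach as the paper: both use Lemma \ref{minG} to view $\boldsymbol\zeta_*$ as a minimizer over $X(\Gamma)$, derive the first-order condition, obtain \eqref{eulerG1} from compactly supported tests, and for \eqref{eulerG2} extend $\n$ via a cutoff near $\partial\om$, subtract the normal part $(\boldsymbol\varphi\cdot\tilde\n)\tilde\n$ to obtain a test function in $X(\Gamma)$ with the same tangential trace, and use the Green formula for $H(\curl)$ together with $\curl\curl\boldsymbol\zeta_*=0$. Your remark that the tangential-trace duality annihilates purely normal boundary values is precisely the ``density argument'' the paper invokes for the identity $\langle\curl\boldsymbol\zeta_*\wedge\n,(\boldsymbol\varphi\cdot\n)\n\rangle_{\partial\om}=0$.
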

\begin{proof} If $\boldsymbol \zeta_*\in \argmin_{X_0(\Gamma)}\Phi_\w$, then by Lemma \ref{minG} we have $\boldsymbol\zeta_*\in \argmin_{X(\Gamma)}\Phi_\w$.
If $\boldsymbol\varphi\in C^1(\overline\Omega,\mathbb R^3)$ is such that $\boldsymbol\varphi\wedge\n=0$ on $\partial\om\setminus\Gamma$ and $\boldsymbol\varphi\cdot\n=0$ on $\Gamma$,  for any $\varepsilon\in(-1,1)$ we have $\boldsymbol\zeta_*+\varepsilon\boldsymbol\varphi\in X(\Gamma)$. Hence,  the following first order condition holds
\begin{equation}\label{EL}
\int_\om\mathrm{curl}\boldsymbol\zeta_*\cdot\mathrm{curl}\boldsymbol\varphi\,dx=\int_{\partial\om}(\w\wedge\boldsymbol\varphi)\cdot \n\,d\mathcal H^2\quad\mbox{ $\forall\boldsymbol\varphi\in C^1(\overline\Omega,\mathbb R^3)\cap X(\Gamma)$. }
\end{equation}
%such that $\boldsymbol\varphi\wedge\n=0$ on $\partial\om\setminus\Gamma$ and $\boldsymbol\varphi\cdot\n=0$ on $\Gamma$. 
Choosing in particular test functions $\boldsymbol\varphi$ that vanish on $\partial\om$, we deduce \eqref{eulerG1}.

 %hence for every $\eta\in C^1_0(\om;\mathbb R^3)$ \eqref{eulerG1} follows easily. 
 
 Let now $0<\delta< R(\partial\om)$, let  $\om_\delta:=\{x\in \om: d(x,\partial\om)< \delta\}$, so that for any $x\in\om$ there is  a unique projection $\sigma(x)$ of $x$ on $\partial\om$,  and set $\n(x):=\n(\sigma(x))$ for every $x\in \om_\delta$.
Let $\eta_\delta\in C^1(\overline\om)$ be a cutoff function such that  $\eta_\delta\equiv 1$ in $\om_{\delta/2}$ and  $ \eta_\delta\equiv 0$ in $\om\setminus\om_\delta$. Moreover,  for every $\bb\varphi \in C^{1}(\overline\om,\mathbb R^3)$ such that $\bb\varphi\equiv 0$ on $\partial\om\setminus\Gamma$, we define $\bb\varphi_\delta:=\eta_\delta\varphi$. We take advantage of the cutoff function to define $\bb\varphi_\delta\cdot \n$ on the whole $\Omega$ even if $\n$ is defined only on $\Omega_\delta$, as follows 
\begin{equation*}
(\varphi_\delta\cdot\n)(x):=\left\{\begin{array}{ll}
& \varphi_\delta(x)\cdot\n(x)
\ \hbox{if}\ x\in \om_\delta\\
&\\
& 0 \ \hbox{otherwise in}\ \om.
\end{array}\right.
\end{equation*}
It is readily seen that $\bb\theta_\delta:=
\bb\varphi_\delta- (\bb\varphi_\delta\cdot\n)\n\in X(\Gamma)\cap C^1(\overline\om,\mathbb R^3)$ and therefore we can make use of \eqref{EL} and get
\begin{equation}\label{nlab}
\int_\om \hbox{curl}\ \bb\zeta_*\cdot\ \hbox{curl}\ \bb\theta_\delta\,dx= -\int_\Gamma (\w\wedge\n)\cdot\bb\theta_\delta \, d\mathcal H^2=-\int_\Gamma (\w\wedge\n)\cdot\bb\varphi \, d\mathcal H^2
\end{equation}
A density argument shows that 
\begin{equation}\label{nnn}
\langle \hbox{curl}\ \bb\zeta_*\wedge\n, (\bb\varphi\cdot\n)\n\rangle_{\partial\om}=0.
\end{equation}
On the other hand, since \eqref{eulerG1} shows that $\curl\curl\bb\zeta_*=0$ a.e. in $\Omega$, integration by parts entails
\begin{equation}\label{320}
\int_\om \hbox{curl}\ \bb\zeta_*\cdot\ \hbox{curl}\ \bb\theta_\delta\,dx=\displaystyle\langle \hbox{curl}\ \bb\zeta_*\wedge\n, \bb\varphi-(\bb\varphi\cdot\n)\n\rangle_{\partial\om}.
\end{equation}
Combining  \eqref{nlab}, \eqref{nnn} and \eqref{320} yields the result. 
\end{proof}

For a divergence-free deformation field $\v$ that vanishes on $\Gamma$, taking advantage of the latter results we can construct a vector potential $\widetilde\w$ that vanishes on $\Gamma$ as well.

\begin{lemma}\lab{w=0} { Assume \eqref{OMEGA} and  \eqref{Gamma}}. %let $\Gamma$ be a relatively closed subset of $\partial\om$  such that $\mathcal H^2(\Gamma)>0$. 
 Let $\v\in H^{1}(\Omega,\R^3)$ such that $div\, \v=0$ a.e. in $\om$ and $\v=0$ on $\Gamma$. Then there exists $\widetilde\w\in H^{2}(\Omega,\R^3)$ such that $\widetilde\w=0$ on $\Gamma$ and $\hbox{curl}\, \widetilde\w=\v$  a.e. in $\om$.
 \end{lemma}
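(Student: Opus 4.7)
The plan is to combine the potential given by Lemma \ref{curl} with the variational corrector provided by Lemmas \ref{minG} and \ref{euler}, and then, if necessary, kill a residual normal trace by subtracting a gradient.

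First, apply Lemma \ref{curl} to produce $\w\in H^2(\om,\R^3)$ with $\curl\w=\v$, $\dv\w=0$ in $\om$, and $\w\cdot\n=0$ on $\partial\om$. Since $\v=0$ on $\Gamma$, we have $\curl\w\cdot\n=\v\cdot\n=0$ on $\Gamma$, so the hypotheses of Lemma \ref{minG} are met. Let $\boldsymbol\zeta_*\in X_0(\Gamma)$ be a minimizer of $\Phi_\w$ and set
\[
\widehat\w:=\w+\curl\boldsymbol\zeta_*.
\]
From \eqref{eulerG1}, $\curl\curl\boldsymbol\zeta_*=0$ in $\om$ as a distribution, so $\curl\widehat\w=\v$. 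Testing \eqref{eulerG2} against arbitrary $\boldsymbol\varphi\in C^1(\overline\om,\R^3)$ vanishing on $\partial\om\setminus\Gamma$ yields $\curl\boldsymbol\zeta_*\wedge\n=-\w\wedge\n$ on $\Gamma$ in the trace sense, whence $\widehat\w\wedge\n=0$ on $\Gamma$: the tangential trace of $\widehat\w$ on $\Gamma$ vanishes.

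To also kill the normal component $\widehat\w\cdot\n$ on $\Gamma$, I would construct $\phi\in H^3(\om)$ with $\phi=0$ on $\partial\om$, $\partial_\n\phi=\widehat\w\cdot\n$ on $\Gamma$, and $\partial_\n\phi=0$ on $\partial\om\setminus\Gamma$, via a standard trace-lifting theorem; compatibility at $\partial\Gamma$ is ensured because $\boldsymbol\zeta_*\cdot\n=0$ on $\Gamma$ and $\boldsymbol\zeta_*\wedge\n=0$ on $\partial\om\setminus\Gamma$ force $\boldsymbol\zeta_*=0$ on $\partial\Gamma$. Then
\[
\widetilde\w:=\widehat\w-\nabla\phi
\]
satisfies $\curl\widetilde\w=\v$ (curl of a gradient vanishes) and $\widetilde\w=0$ on $\Gamma$: the tangential part vanishes because $\phi=0$ on $\Gamma$ forces $\nabla_\tau\phi=0$ there, while the normal part vanishes by the choice of $\partial_\n\phi$.

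The main obstacle I expect is the $H^2$-regularity of $\widetilde\w$. By Lemma \ref{divrot} this reduces to checking that $\widetilde\w\in L^2$, $\curl\widetilde\w=\v\in H^1$, $\dv\widetilde\w=-\Delta\phi\in H^1$ and $\widetilde\w\cdot\n\in H^{3/2}(\partial\om)$, the last point requiring enough regularity of $\curl\boldsymbol\zeta_*\cdot\n$ on $\Gamma$ to allow the trace lift $\phi\in H^3$. This in turn follows from the harmonicity of $\boldsymbol\zeta_*$ (combining $\dv\boldsymbol\zeta_*=0$ with $\curl\curl\boldsymbol\zeta_*=0$ gives $\Delta\boldsymbol\zeta_*=0$) under mixed boundary conditions whose Neumann-type datum $-\w\wedge\n$ on $\Gamma$ belongs to $H^{3/2}(\Gamma)$, together with the $C^3$ smoothness of $\partial\om$ and $\partial\Gamma$ providing the boundary regularity up to the switch.
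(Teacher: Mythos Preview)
Your overall plan---produce $\w$ via Lemma \ref{curl}, use the variational corrector $\boldsymbol\zeta_*$ from Lemmas \ref{minG}--\ref{euler} to kill the tangential trace on $\Gamma$, then subtract a gradient---is the right one, and your identification of the $H^2$-regularity step as the main obstacle is accurate. The gap is precisely in how you propose to resolve it.

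You want $\curl\boldsymbol\zeta_*\cdot\n\in H^{3/2}(\Gamma)$ (with compatibility at $\partial\Gamma$) by invoking elliptic regularity for the harmonic vector field $\boldsymbol\zeta_*$ with mixed boundary conditions. This is where the argument breaks down: elliptic problems whose boundary conditions switch type across a curve $\partial\Gamma$ generically lose regularity near that curve, and one cannot expect $\boldsymbol\zeta_*\in H^{5/2}$ (hence $\curl\boldsymbol\zeta_*\cdot\n\in H^{3/2}$) up to $\partial\Gamma$ merely from $C^3$ geometry and $H^{3/2}$ Neumann data. Your subsequent application of Lemma \ref{divrot} faces the same obstruction, since $\widetilde\w\cdot\n$ on $\partial\om\setminus\Gamma$ equals $\curl\boldsymbol\zeta_*\cdot\n$.

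The paper sidesteps any interior regularity of $\boldsymbol\zeta_*$ by a simple but decisive observation: since $\curl\curl\boldsymbol\zeta_*=0$ and $\Omega$ is simply connected, $\curl\boldsymbol\zeta_*=-\nabla\theta$ for some scalar $\theta\in H^1(\Omega)$. Condition \eqref{eulerG2} then reads $\nabla\theta\wedge\n=\w\wedge\n$ on $\Gamma$, i.e.\ the \emph{tangential gradient} of $\theta$ on $\Gamma$ lies in $H^{3/2}(\Gamma)$ (because $\w\in H^2$), whence $\theta|_\Gamma\in H^{5/2}(\Gamma)$. One extends $\theta|_\Gamma$ to $\widetilde\theta\in H^{5/2}(\partial\Omega)$ (here the $C^3$ regularity of $\partial\Gamma$ is used) and solves the biharmonic problem $\Delta^2\psi_*=0$ in $\Omega$ with $\psi_*=\widetilde\theta$ and $\partial_\n\psi_*=0$ on $\partial\Omega$, obtaining $\psi_*\in H^3(\Omega)$. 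Then $\widetilde\w:=\w-\nabla\psi_*$ is in $H^2$ \emph{directly}, with $\curl\widetilde\w=\v$; on $\Gamma$ the tangential part vanishes because $\nabla\psi_*\wedge\n$ equals the tangential gradient of $\theta$, which is $\w\wedge\n$, and the normal part vanishes because $\w\cdot\n=0$ and $\partial_\n\psi_*=0$ on all of $\partial\Omega$.

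In short: rather than lifting $\boldsymbol\zeta_*$ to higher regularity, recognize that only the trace on $\Gamma$ of a scalar potential matters, and replace that potential by a globally smooth one (via a biharmonic extension) sharing the same Dirichlet trace on $\Gamma$ and zero normal derivative on $\partial\Omega$.
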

 \begin{proof} Let $\bb\zeta_*$ as in Lemma \ref{euler}. Then $\hbox{curlcurl}\ \bb\zeta_*=0$ a.e. in $\om$ and by arguing as in Lemma
 \ref{minG} with the help  \cite[Lemma 2.1]{GR} there exists $\theta\in H^1(\om)$ such that  $-\nabla\theta= \hbox{curl}\ \zeta_*$ a.e. in $\om$, hence by \eqref{eulerG2}  there holds $\nabla\theta\wedge\n=\w\wedge\n$ in the sense of $H^{-1/2}(\Gamma;\mathbb R^3)$. By taking into account that  $\w\in H^{3/2}(\partial\om,\mathbb R^3)$ and that $\n\wedge(\nabla\theta\wedge\n)$ is the (weak) tangential gradient of $\theta$ on $\Gamma$ we get $\theta\in H^{5/2}(\Gamma) $, so there exists $\widetilde\theta\in H^{5/2}(\partial\om) $ such that 
 $\widetilde\theta=\theta$ on $\Gamma$ { (thanks to the regularity of $\partial\Gamma$)}.
 Let now $\psi_*$ be  the unique solution  to the biharmonic boundary value problem
\begin{equation*}\left\{\begin{array}{ll}
&\Delta^{2}\psi=0\ \hbox{in}\ \om\\
&\\
&\psi=\widetilde\theta\ \hbox{on}\  \partial\om\\
&\\
&\displaystyle \frac{\partial\psi} {\partial\n}=0\ \hbox{on}\  \partial\om.\\
\end{array}\right.
\end{equation*}
Since $\widetilde\theta\in H^{5/2}(\partial\om) $ then $\psi_*\in H^{3}(\om)$ so by setting $\widetilde \w:=\w-\nabla \psi_* \in H^{2}(\om,\mathbb R^{3})$ we get $\widetilde \w\wedge\n=(\w-\nabla \psi_*)\wedge\n= \w\wedge\n-\nabla\theta\wedge\n=0$ on $\Gamma$ and $curl\, \widetilde\w=curl\, \w=\v$ a.e. in $\om$ and thesis follows by recalling that $\w\cdot\n=0$ on the whole $\partial\om$.
\end{proof}
{Lastly, we prove a property of traces of $H^2(\om;\mathbb R^3)$ functions which will be extremely useful in the proof of our main result.}
%\begin{lemma}\lab{w=0}Assume \eqref{OMEGA}, \eqref{Gamma} and
% let $\v\in H^{1}(\Omega;\R^3)$ such that $div\, \v=0$ a.e. in $\om$ and $\v=0$ on $\Gamma$. Then there exists $\widetilde\w_0\in H^{2}(\Omega;\R^3)$ such that $\widetilde\w_0=0$ on $\Gamma$ and $\hbox{curl}\, \widetilde\w_0=\v$  in $\om$.
% \end{lemma}
% \begin{proof} Let $\widetilde\w$ as in Lemma \ref{wwedgen=0}: since $\widetilde\w\cdot\n\in H^{3/2}(\om) $ the problem
% \begin{equation}\lab{Delta2norm}\left\{\begin{array}{ll}
%&\Delta^{2}\varphi=0\ \hbox{in}\ \om\\
%&\\
%&\varphi=0\ \hbox{on}\  \partial\om\\
%&\\
%&\displaystyle \frac{\partial\varphi} {\partial\n}=\widetilde\w\cdot\n\ \hbox{on}\  \partial\om.\\
%\end{array}\right.
%\end{equation}
%has a unique solution $\widetilde\varphi\in H^{3}(\om)$ hence by setting $\widetilde\w_0:=\widetilde\w-\nabla\widetilde\varphi$ thesis follows easily.
%\end{proof}
\begin{lemma}\lab{dernormwtang} Assume \eqref{OMEGA}  and \eqref{Gamma}. %let $\Gamma$ be a relatively closed  subset of $\partial\om$  such that $\mathcal H^2(\Gamma)>0$. 
Let $\w\in H^{2}(\Omega,\R^3)$ such that $\w=0$ on $\Gamma$. Then
\begin{equation}\label{den}
\frac{\partial}{\partial\n}(\w\wedge\n) =0\  \ \hbox{ on}\ \Gamma
\end{equation}
if and only if
\begin{equation*}
\curl\w=0\  \ \hbox{on}\ \Gamma.
\end{equation*}
\end{lemma}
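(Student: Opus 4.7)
The plan is to reduce both conditions to the same pointwise-on-$\Gamma$ identity involving only $\partial_\n\w$. To this end I would first extend $\n$ to a tubular neighborhood of $\partial\om$ by $\n(x):=\n(\sigma(x))$, where $\sigma$ is the nearest-point projection, which is well defined and $C^{2}$ inside the reach of $\partial\om$ (recall $\partial\om$ is $C^{3}$). With this extension, $\n(x)$ is constant along the normal line through each boundary point, hence $(\n\cdot\nabla)\n=0$ in the neighborhood; at any rate we will not really need this, because in all intermediate identities the factor $\w$ multiplies potentially offending terms and vanishes on $\Gamma$.

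The first key step is the trace formula
\[
\nabla\w\,\big|_{\Gamma}=\partial_\n\w\otimes\n,
\]
valid because $\w\in H^{2}(\om,\mathbb R^{3})$ has traces of $\nabla\w$ in $H^{1/2}(\partial\om)$ and the assumption $\w=0$ on $\Gamma$ forces all tangential derivatives of $\w$ along $\Gamma$ to vanish; rigorously, picking a local orthonormal frame $(\bb\tau_{1},\bb\tau_{2},\n)$ on $\partial\om$ and writing $\partial_{j}\w=\sum_{i}(e_{j}\cdot\bb\tau_{i})\,\partial_{\bb\tau_{i}}\w+(e_{j}\cdot\n)\,\partial_{\n}\w$ on $\Gamma$, the tangential pieces drop out. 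From this, using the index formula $(\curl\w)_{i}=\varepsilon_{ijk}\partial_{j}w_{k}$, I get
\[
\curl\w\,\big|_{\Gamma}=\n\wedge\partial_\n\w .
\]

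The second key step is to compute $\partial_\n(\w\wedge\n)$ on $\Gamma$ by the product rule:
\[
\partial_\n(\w\wedge\n)=(\partial_\n\w)\wedge\n+\w\wedge(\partial_\n\n).
\]
On $\Gamma$ the second summand vanishes since $\w=0$ there (this is where the hypothesis on the boundary values of $\w$ really bites, and why the choice of smooth extension of $\n$ is immaterial). Thus on $\Gamma$
\[
\partial_\n(\w\wedge\n)=(\partial_\n\w)\wedge\n=-\n\wedge\partial_\n\w=-\curl\w,
\]
and the equivalence $\partial_\n(\w\wedge\n)=0 \Longleftrightarrow \curl\w=0$ on $\Gamma$ follows at once. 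The only mild obstacle is the rigorous handling of traces on the proper subset $\Gamma\subset\partial\om$; this can be dispatched either by approximating $\w$ in $H^{2}$ by smooth functions vanishing on a neighborhood of $\Gamma$ in $\partial\om$ (using the $C^{3}$ regularity of $\partial\Gamma$ built into \eqref{Gamma}), or simply by observing that both sides of the identity $\partial_\n(\w\wedge\n)+\curl\w=0$ are elements of $H^{1/2}(\partial\om,\mathbb R^{3})$ vanishing $\mathcal H^{2}$-a.e.\ on $\Gamma$ by the pointwise argument applied to smooth approximants.
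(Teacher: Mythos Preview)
Your proof is correct and, in fact, more direct than the paper's. You go straight to the unified identity $\partial_\n(\w\wedge\n)\big|_\Gamma=-\curl\w\big|_\Gamma$, obtained by combining the trace relation $\partial_j w_k=n_j\,\partial_\n w_k$ on $\Gamma$ (tangential derivatives of $\w$ vanish since $\w=0$ there), which gives $\curl\w\big|_\Gamma=\n\wedge\partial_\n\w$, with the product rule computation $\partial_\n(\w\wedge\n)=(\partial_\n\w)\wedge\n$ on $\Gamma$. The equivalence is then immediate.

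The paper treats the two implications separately. For the forward direction it uses that the extended normal satisfies $\curl\n=0$ (since $\n=\nabla b$ for the signed distance $b$), whence $\partial_k\n=\nabla n_k$, and reduces $\partial_\n(\w\wedge\n)$ to $\sum_k n_k\,\nabla w_k\wedge\n$, which vanishes because each $\nabla w_k$ is normal on $\Gamma$. For the converse it decomposes $\partial_k\w-\nabla w_k$ into its normal and tangential parts on $\Gamma$ and shows each piece vanishes by separate cancellations. Your route is shorter and isolates the essential content of the lemma in a single pointwise identity; the paper's approach never writes the rank-one trace formula for $\nabla\w$ explicitly, but this is a cosmetic difference rather than a substantive one. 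One minor remark: with the paper's convention $(\nabla\w)_{jk}=\partial_j w_k$ the rank-one trace formula reads $\nabla\w\big|_\Gamma=\n\otimes\partial_\n\w$ rather than $\partial_\n\w\otimes\n$, but this does not affect your index computation of the curl, which is correct as written.
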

\begin{proof} 
%Through the computations in this proof we make use of the usual summation convention on repeated indices.
As in the proof of Lemma \ref{euler} we define here $\n(x):=\n(\sigma(x))$ for every $x$ in a small neighborhood $\Omega_\delta$ of $\partial\om$, being $\sigma(x)$ the unique projection of $x$ on $\partial\om$. In particular, $\curl\n=0$ in $\overline{\Omega_\delta}$ so that $\partial_k\n=\nabla n_k$ on $\overline{\Omega_\delta}$ for any $k\in\{1,2,3\}$.  Suppose first that $\hbox{curl}\ \w=0$  on $\Gamma$. Then
we have on $\Gamma$ 
\begin{equation}\lab{dernorm}
\frac{\partial}{\partial\n}(\w\wedge\n)=\sum_{k=1}^3n_k\partial_k(\w\wedge\n)=\sum_{k=1}^3(n_k\partial_k\w\wedge\n+n_k\w\wedge\partial_k\n)=\sum_{k=1}^3 n_k\nabla\w_{k} \wedge\n
%=\\
%&\\
%&= n_k\nabla ((\widetilde\w\cdot\n)n_k+(\widetilde\w\wedge\n)_k)\wedge\n= 
%\nabla (\widetilde\w\cdot\n)\wedge\n=0
\end{equation}
%since $\nabla(\widetilde\w\cdot\n)\wedge\n=0$ on $\Gamma$  (it is the tangential gradient of  $\widetilde\w\cdot\n\equiv 0$ on $\Gamma$) and 
since
$\sum_{k=1}^3 n_k\nabla n_k=0$. But $\nabla\w_{k} \wedge\n=0$ on $\Gamma$ for any $k\in\{1,2,3\}$ since $\mathbf w=0$ on $\Gamma$, implying that the tangential derivative of  $\w$  vanishes  on $\Gamma$.
Conversely if  \eqref{den} holds, by arguing as in \eqref{dernorm} with the help of $\nabla\w_{k} \wedge\n=0$  on $\Gamma$  we get
\begin{equation}\lab{curlgamma1}
\sum_{k=1}^3n_k(\partial_k\w-\nabla\w_k)\wedge\n=0\  \ \hbox{ on}\ \Gamma.
\end{equation}
Since by symmetry $\sum_{i=1}^3\sum_{k=1}^3n_in_k(\partial_k\w_i-\partial_i\w_k)=0$, from \eqref{curlgamma1} we get $$\sum_{k=1}^3n_k(\partial_k\w-\nabla\w_k)=0\ \hbox{ on}\ \Gamma.$$ Therefore, fixing  $k\in\{1,2,3\}$,
\begin{equation*}\begin{aligned}
\partial_k\w-\nabla\w_k&=\sum_{i=1}^3 (\partial_k\w_i-\partial_i\w_k)n_i\n+ \n\wedge((\partial_k\w-\nabla\w_k)\wedge\n)= \n\wedge((\partial_k\w-\nabla\w_k)\wedge\n) \\
&= \n\wedge(\partial_k\w\wedge\n)
=\n\wedge(\partial_k(\w\wedge\n)-\w\wedge\partial_k\n)=0
\end{aligned}
\end{equation*}
  on  $\Gamma$, where the latter equality is due to the fact that $\nabla(\w\wedge \n)=0$ $\hbox{ on}\ \Gamma$ (since  \eqref{den} holds and since  $\w\wedge \n$ vanishes on $\Gamma$). The result is proven.
\end{proof}

\section{Approximation results}\label{approximationsection}

Several approximation results will be needed for obtaining $\Gamma$-convergence and for giving the proof of the main theorem.
The first one is contained in the next lemma, which is a consequence of the well known {\it Reynolds' Transport Theorem}. We will prove it in some details since 
its application in this context seems a novelty, at least to our present knowledge.
%\MMM  le ipotesi su $\om$ (bounded) e $\Gamma$ si possono rilassare \KKK
\begin{lemma}\lab{reynolds} Let $\Omega$ be a bounded open subset of $\mathbb R^3$ and let $\Gamma\subseteq\partial\om$.  Let $\om'\subset \mathbb R^{3}$ be an open set such that $\overline\om\subset \om'$.  Let $\v\in C^1(\Omega ';\R^3)$ be such that $\mathrm{div}\, \v=0$  in $\om '$ and  $\v=0$ on $\Gamma$. Then, for every sequence $(h_j)_{j\in\mathbb N}$ of strictly positive numbers such that $\lim_{j\to\infty }h_{j}= 0$,    there exists a sequence $(\v_{j})_{j\in\mathbb N}\subset C^1(\om',\R^3)$ such that 
\begin{equation}\label{1}
%\beeq\lab{det=1}\begin{array}{ll} &
\det (\mathbf I+h_{j}\nabla \v_{j})=1\ \hbox{  in}\ \om,
\end{equation}
\begin{equation}\label{2}
 \v_{j}=0\ \hbox{ on }\ \Gamma,
\end{equation} 
\begin{equation}\label{3}\v_{j}\to \v\ \hbox{ in}\  W^{1,p}(\om, \mathbb R^{3})\qquad \forall p\in [1,+\infty),\end{equation}
\begin{equation}\label{4} \|h_{j}\nabla \v_{j}\|_{L^{\infty}(\om,\mathbb R^{3\times 3})}\to 0.\end{equation}
%\end{array}
% \eneqx

 \end{lemma}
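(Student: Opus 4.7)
My plan is to construct $\v_j$ as the time-$h_j$ map of (a compactly supported extension of) the flow generated by $\v$, exploiting two well-known facts: (i) flows of divergence-free fields are volume-preserving, which is exactly the Reynolds/Liouville content and gives \eqref{1} for free; and (ii) zeros of $\v$ are equilibria of the flow, so points on $\Gamma$ are fixed, which gives \eqref{2} for free.

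\textbf{Setup.} Choose an auxiliary open set $\om''$ with $\overline\om\subset\om''\subset\overline{\om''}\subset\om'$ and a cutoff $\chi\in C^\infty_c(\om')$ with $\chi\equiv 1$ on $\om''$. Put $\tilde\v:=\chi\v$, extended by zero to $\R^3$. Then $\tilde\v\in C^1(\R^3,\R^3)$ is bounded, coincides with $\v$ on $\om''$ (so $\dv\tilde\v=0$ there), and still vanishes on $\Gamma$. Let $\Phi\in C^1(\R\times\R^3,\R^3)$ be its global flow, $\partial_t\Phi(t,x)=\tilde\v(\Phi(t,x))$, $\Phi(0,x)=x$, and define
\[
\v_j(x):=\frac{\Phi(h_j,x)-x}{h_j},\qquad x\in\om'.
\]

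\textbf{Verification of \eqref{1}--\eqref{4}.} The boundary condition \eqref{2} is immediate: if $x\in\Gamma$ then $\tilde\v(x)=0$, so the constant curve $t\mapsto x$ solves the defining ODE and uniqueness gives $\Phi(h_j,x)=x$, i.e.\ $\v_j(x)=0$. For \eqref{1}, note that $\mathbf I+h_j\nabla\v_j(x)=\nabla_x\Phi(h_j,x)$ and apply Liouville's formula
\[
\tfrac{d}{dt}\det\nabla_x\Phi(t,x)=(\dv\tilde\v)(\Phi(t,x))\,\det\nabla_x\Phi(t,x);
\]
for $x\in\overline\om$ and $h_j$ small enough, $\Phi([0,h_j],x)\subset\om''$ where $\dv\tilde\v=0$, hence $\det\nabla_x\Phi(h_j,x)=1$. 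The regularity of $\Phi$ in $(t,x)$ then yields, uniformly on $\overline\om$, the first-order expansions
\[
\Phi(h_j,x)=x+h_j\tilde\v(x)+o(h_j),\qquad \nabla_x\Phi(h_j,x)=\mathbf I+h_j\nabla\tilde\v(x)+o(h_j),
\]
so $\v_j\to\tilde\v=\v$ in $C^1(\overline\om)$ (giving \eqref{3} for every $p\in[1,\infty)$), while $\|h_j\nabla\v_j\|_{L^\infty(\om)}=\|\nabla_x\Phi(h_j,\cdot)-\mathbf I\|_{L^\infty(\om)}=O(h_j)\to 0$ (giving \eqref{4}).

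\textbf{Main difficulty.} The only nonroutine ingredient, in my view, is the interplay between the cutoff (which destroys the divergence-free property outside $\om''$) and the flow invariance required for Liouville: one must guarantee that trajectories starting in $\overline\om$ remain in the divergence-free region $\om''$ throughout $[0,h_j]$. This is achieved whenever $h_j\|\tilde\v\|_{L^\infty}<\mathrm{dist}(\overline\om,\partial\om'')>0$, which is the case for all $j$ large enough; the finitely many excluded initial indices can be set, e.g., $\v_j\equiv 0$ without affecting the asymptotic conclusions.
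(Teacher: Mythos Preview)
Your proof is correct and follows essentially the same approach as the paper: define $\v_j$ via the time-$h_j$ flow map of the (divergence-free) field, obtain \eqref{1} from Liouville's formula and \eqref{2} from the fact that zeros of $\v$ are equilibria, and deduce \eqref{3}--\eqref{4} from the regularity of the flow. The only cosmetic differences are that the paper restricts to a slightly smaller domain $\om_*\Subset\om'$ and a short time $T$ (instead of using a cutoff and the global flow), and that it spells out the convergence of $\nabla\v_j$ via explicit Gronwall-type estimates and dominated convergence rather than invoking the joint $C^1$ regularity of $\Phi$ directly; your route in fact yields the slightly stronger conclusion $\v_j\to\v$ in $C^1(\overline\om)$.
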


\begin{proof} 
Let $\om_*$ be an open set, compactly contained in $\Omega'$, such that $\overline\om\subset\Omega_*$. We choose $T\in(0,1)$ small enough, such that $\mathbf y(t,x)\in\om'$ for any $x\in\om_*$ and any $t\in[0,T]$, where $\mathbf y(\cdot,x)$ is the unique solution to
%We first consider the flow $\bf y$ associated to the vector field $\v$, i. e., for any $\mathbf x\in\om_*$ we consider the unique solution to 
\beeq \lab{flow}\left\{\begin{array}{ll} &\displaystyle\frac{\partial  {\bf y}}{\partial t}(t,x)=\v({\bf y}(t,x)),\qquad t\in(0,T]\\
&\\
& {\bf y}(0,x)=x.
\end{array}\right.
\eneq
We are indeed in the setting of \cite[Corollary 5.2.8, Remark 5.2.9]{HP}: we have that $\mathbf y\in C^1([0,T]; W^{1,\infty}(\om_*))$
is the flow associated to the vector field $\v$.
Moreover (letting $\nabla$ denote the derivative in the $x$ variable) $\mathbf Z(t,x):=\nabla \mathbf y(t,x)$  satisfies
\beeq \lab{flow2}\left\{\begin{array}{ll} &\displaystyle\frac{\partial  { \mathbf Z}}{\partial t}(t,x)=\nabla \v({\bf y}(t,x))\mathbf Z(t,x)\qquad t\in(0,T],\\
&\\
& {\mathbf Z}(0,x)=\mathbf I,
\end{array}\right.
\eneq
for a.e. $x\in\om_*$, { hence $\mathbf Z\in C^1([0,T]; L^\infty(\om_*))$ and
\begin{equation*}
\mathbf Z(t,x)=\exp\left (\int_0^t\nabla\v(\mathbf y(s,x))\,ds\right )
\end{equation*}
 for every $t\in [0,T]$ and for a.e. $x\in\om_*$. Therefore,
 \begin{equation}\lab{det}
\det \mathbf Z(t,x)=\exp\left (\int_0^t\tr\,\nabla\v(\mathbf y(s,x))\,ds\right )=\exp\left (\int_0^t\dv\v(\mathbf y(s,x))\,ds\right )=1
\end{equation}
for every $t\in [0,T]$ and for a.e. $x\in\om_*$.
 
 % we have $\mathbf y(t,\mathbf x)\in\Omega''$ for any $t\in[0,T]$ and some $\om''\supset\overline\om$ which is compactly contained in $\om'$.
%In particular,   given a measurable set $A\subset \Omega$, for any $t\in[0,T_*]$ we have $A_t:=\mathbf y(t,A)\subset\Omega'$  and \cite[Corollary 5.2.8]{HP} entails
%\[
%\frac{d}{dt}|A_t|=\int_{A_t}\mathrm{div}\, \mathbf v( x)\,d x=0,
%\]
%hence $|A_t|=|A|$ for any $t\in[0,T_*]$.
%Therefore, using  the change of variables formula
%\begin{equation*}
%|A_t|=\int_{A}\det \nabla{\mathbf y}(t,x)\,dx,
%\end{equation*}
%we conclude that for any $t\in[0,T_*]$ we get $\det\nabla \mathbf y(t,x)=1$ a.e. in $\Omega$. 
%There exists $T>0$ such that for every $x\in \overline \om$ there exists a unique flow ${\bf y}\in C^{1}([0,T); Lip (\om '; \mathbb R^{3}))$ associated to the vector field $\v$ according to the equation
%\beeq \lab{flow}\left\{\begin{array}{ll} &\displaystyle\frac{\partial  {\bf y}}{\partial t}(t,x)=\v({\bf y}(t,x))\\
%&\\
%& {\bf y}(0,x)=x
%\end{array}\right.
%\eneq
% and by setting, for every $t\in [0,T)$ and for every bounded and measurable set $A\subset \overline \om$,
%
%{\it Reynolds' Transport Theorem} yields
%\begin{equation*}
%J_{A}'(t)=\int_{A} div\,\v({\bf y}(t,x))\,dx.
%\end{equation*}
%By taking into account that $div\, \v=0$ a.e. in $\om '$ we get $J_{A}'(t)=0$ that is $J_{A}(t)=J_{A}(0)=|A|$ hence $\det \nabla_{x}{\bf y}(t,x)=1$
%for every $t\in [0, T)$ and for a.e. $x\in \om$.\\

Assuming wlog that $h_j<T$, we define $${\bf y}_{j}(x):={\bf y}(h_{j},x),\qquad  \v_{j}(x):= h_{j}^{-1}({\bf y}_{j}(x)-x),\qquad x\in\om_*.$$
 By taking into account that $\v=0$ on $\Gamma\subset \om_*$ we get ${\mathbf y}_{j}(x)\equiv x$ on $\Gamma$ so $\v_j$ vanishes on $\Gamma$ and \eqref{det} entails   $\det (\mathbf I+h_{j}\nabla \v_{j})=1$ a.e. in $\om$, thus proving \eqref{1} and \eqref{2}. }

We next prove \eqref{3}. Let $t\in [0,T]$. We notice that from \eqref{flow} we get
\begin{equation}\label{diff}
\frac1t\left(\mathbf y(t,x)-x\right)-\mathbf v(x)=\frac1t\int_0^t(\mathbf v(\mathbf y(s,x))-\mathbf v(x))\,ds
\end{equation}
and thus
\[
\frac1t|\mathbf y(t,x)-x|\le \|\mathbf v\|_{W^{1,\infty}(\Omega')}\int_0^t\frac1s|\mathbf y(s,x)-x|\,ds
\]
for any $x\in\Omega$,
and Gronwall lemma entails 
\[
\frac1t|\mathbf y(t,x)-x|\le |\mathbf v(x)| \exp\{\|\mathbf v\|_{W^{1,\infty}(\Omega')}t\}\le C_{\mathbf v},
\]
where $C_{\mathbf v}:=  {\|\mathbf v\|_{W^{1,\infty}(\Omega')}} \exp\{\|\mathbf v\|_{W^{1,\infty}(\Omega')}\}$. 
From the definition of $\mathbf v_j$, from \eqref{diff} and from the latter estimate we obtain
\[\begin{aligned}
\displaystyle\left |\v_{j}(x)- \v(x)\right |&=\left|\frac1{h_j}(\mathbf y(h_j,x)-x)-\mathbf v(x)\right|	\le  \|  \v\|_{W^{1,\infty}(\Omega)}\, \int_{0}^{h_{j}}\frac1s\,\left |{\bf y}(s,x)-x\right |\,ds\\&\le C_{\v}\|\v\|_{W^{1,\infty}(\Omega)}\,h_j
\end{aligned}\]
for any $x\in\Omega$ and any $j\in\mathbb N$. From the latter we get the convergence of $\v_j$ to $v$ in $L^1\cap L^\infty(\Omega)$ as $j\to0$.

We take now the gradient in \eqref{diff}, and since the map $\Omega_*\ni x\mapsto\v(\mathbf y(t,x))$ is Lipschitz continuous for any $t\in(0,T]$ we may take the gradient under integral sign and obtain
\begin{equation}\label{grad}
\begin{aligned}
&\frac1t (\nabla \mathbf y(t,x)-\mathbf I)-\nabla\mathbf v(x)=\frac1t\int_0^t\left(\nabla[\mathbf v(\mathbf y(s,x))]-\nabla \mathbf v(x)\right)\,ds\\
&=\frac1t \int_0^t\left(\nabla\mathbf v(\mathbf y(s,x))\nabla \mathbf y(s,x)-\nabla \mathbf v(x)\nabla\mathbf y(s,x)\right)\,ds+
\frac1t\int_0^t\left(\nabla \mathbf v(x)\nabla\mathbf y(s,x) -\nabla \mathbf v(x)\right)\,ds
\end{aligned}
\end{equation}
for a.e. $x\in\Omega$. Form the first equality of \eqref{grad} we get  
\begin{equation}\label{ad4}\begin{aligned}
\frac1t \|\nabla \mathbf y(t,\cdot)-\mathbf I\|_{L^\infty(\om_*)}&=\frac 1t\left\|\int_0^t\nabla \v(\mathbf y(s,\cdot))\nabla\mathbf y(s,\cdot)\,ds\,\right\|_{L^\infty(\om')}\\&\le \|\v\|_{W^{1,\infty}(\om_*)} \sup_{t\in[0,T]}\|\nabla \mathbf y(t,\cdot)\|_{L^\infty(\om_*)}=\mathcal Q \|\v\|_{W^{1,\infty}(\om_*)}
\end{aligned}\end{equation}
for any $t\in(0,T]$, where $\mathcal Q:=\|\nabla \mathbf y\|_{C^0([0,1]; L^\infty(\om_*))}<+\infty$ recalling \eqref{flow2}.
Still from the first equality of \eqref{grad} we have
\begin{equation*}
\begin{aligned}
&|\nabla\v_j(x)-\nabla \v(x)|=\left|\frac1{h_j}(\nabla\mathbf y(h_j,x)-I)-\nabla \v(x)\right|\\&\le \int_0^{h_j}\frac{|\nabla \mathbf y(s,x)|}{h_j}\,|\nabla \v(\mathbf y(s,x))-\nabla \v(x)|\,ds+\| \v\|_{W^{1,\infty}(\om')}\int_0^{h_j}\frac{|\nabla \mathbf y(s,x)-I|}{s}\,ds, 
\end{aligned}
\end{equation*}
for a.e. $x\in\om$ and any $j\in\mathbb N$.
Hence, \eqref{ad4} entails
\begin{equation}\label{tointegrate}
|\nabla\v_j(x)-\nabla \v(x)|\le \frac {\mathcal Q}{h_j}\int_0^{h_j}|\nabla \v (\mathbf y(s,x))-\nabla \v(x)|\,ds+\mathcal Q \|\v\|_{W^{1,\infty}(\om')} h_j
\end{equation}
for a.e. $x\in\om$ and any $j\in\mathbb N$.
We notice that for $p\in[1,+\infty)$, by Jensen inequality there holds
\[\begin{aligned}
\frac1{h_j^p}\left(\int_0^{h_j} |\nabla \v(\mathbf y(s,x))-\nabla \v(x)|\,ds\right)^p&\le\frac1{h_j}\int_0^{h_j}|\nabla\v(\mathbf y(s,x))-\nabla \v(x)|^p\,ds
\\&\le2\|v\|_{W^{1,\infty}(\om')}^{p-1}\frac1{h_j}\int_0^{h_j}|\nabla \v(\mathbf y(s,x))-\nabla \v(x)|\,ds
\end{aligned}\]
and the above right hand side vanishes for any $x\in\om$   as $j\to+\infty$ since $\nabla \v$ is continuous by assumption, so that we obtain
\begin{equation}\label{dominated}
\lim_{j\to\infty} \frac1{h_j^p}\int_\om\left(\int_0^{h_j} |\nabla \v(\mathbf y(s,x))-\nabla \v(x)|\,ds\right)^p\,dx=0
\end{equation}
by dominated convergence, using $2\|v\|_{W^{1,\infty}(\om')}^{p}$ as dominating function on the bounded domain $\om$.
From \eqref{tointegrate} we find
\[
\int_\om |\nabla\v_j(x)-\nabla \v(x)|^p\,dx\le \frac{\mathcal Q^p}{h_j^p}\int_\om\left(\int_0^{h_j}|\nabla \v(\mathbf y(s,x))-\nabla \v(x)|\,ds\right)^p\!\!\!dx+|\Omega|\mathcal Q^p\|\v\|_{W^{1,\infty}(\om_*)}^ph_j^p
\]
so that the $L^p(\Omega)$ convergence of $\nabla \v_j$ to $\nabla \v$ follows by taking the limit as $j\to+\infty$ and by using \eqref{dominated}. This concludes the proof of \eqref{3}.

Eventually, since $\nabla \v_j(x)=\frac1{h_j}(\nabla\mathbf y(h_j,x)-\mathbf I)$, \eqref{4} directly follows from \eqref{ad4}.
\end{proof}

%If now $X\subset \mathbb R^N$ we recall that the reach  of $X$, see \cite{F}, is defined by
%\begin{equation}\lab{reach}
%R(X):=\sup\{r>0: \forall x\in \mathbb R^N\setminus X, \ d(x,X)< r\ \ \exists ! \ y\in X : d(x,y)=d(x,X)\}
%\end{equation}
%and that $R(X)> 0$ whenever $X$ is a $C^{1,1}$ closed $1D$ or $2D$ manifold in $\mathbb R^3$.

The next step is an approximation of divergence-free $H^1(\Omega,\mathbb R^3)$ vector fields with divergence-free $C^1(\overline\om,\mathbb R^3)$ vector fields, in presence of suitable vanishing conditions on subsets of $\partial\om$. It is stated in Lemma \ref{gammal}. It requires the introduction of some notation about normal bundles and a couple of preliminary lemmas.
From here and through the rest of the paper, assumptions \eqref{OMEGA} and \eqref{Gamma} are always understood to hold.

Recalling the definition of reach from \eqref{reach}, with the convention $R(\emptyset)=+\infty$, let
\begin{equation}\label{mu0}
\mu_0:=\frac12\,\min\{R(\partial\om),R(\partial\Gamma)\}.
\end{equation}

\begin{remark}[Regularity of the squared distance function]\label{distancefunction}\rm
Assume \eqref{OMEGA} and \eqref{Gamma}. Let either $A=\partial\Gamma\neq\emptyset$ or $A=\partial\om$.
The distance function $d(x,A):=\min_{y\in A}|x-y|$ is differentiable at any point $x\in\mathbb R^3$ such that $0<d(x,A)<R(A)$, see \cite[Theorem 3.3, Chapter 6]{DZbook}. In particular, the squared distance function $d^2(\cdot, A)$ inherits the $C^3$ regularity of $A$ in the tubular neighbor $U_0(A):=\{x\in\mathbb R^3: d(x,A)<\mu_0\}$, see for instance \cite[Proposition 4.6]{MM}, see also \cite[Theorem 6.5, Chapter 6]{DZbook}. %These $C^3$ regularity properties are therefore satisfied in tubular neighbors of width $\mu_0$.
We deduce $d(\cdot, A)\in C^3(U_0(A)\setminus A)$.

\end{remark}

 For every $0< \mu< \mu_0$, let
 \begin{equation}\begin{array}{ll}\lab{SpartS}
 & S_{\mu}:=\{\sigma+t\n(\sigma):\ \sigma\in \Gamma,\ |t|\le \mu\}.
 %&\\
 %&\partial_l S_{\mu}:=\{\sigma+t\n(\sigma):\ \sigma\in \partial\Gamma,\ |t|\le \mu\}.
 \end{array}\end{equation}
We further define, for any
$0< \mu< \mu_0$ and any $0<\delta<\mu_0$,
 %and  for every $0< \delta <  \delta_0$, where $\delta_0$ is the threshold provided by Lemma \ref{phidelta},
%\begin{equation}\lab{TSigma}\begin{array}{ll}
%& T^\pm_{\mu,\delta}:=\{ s\pm\tau\bb\nu_l(s): s\in \partial_l S_{\mu},\ 0\le\tau \le\delta\},\\
% &\\
%% &T^-_{\mu,\delta}:=\{ s+\tau\bb\nu_l(s): s\in \partial_l S_{\mu},\ 0\le\tau \le\delta\}\\
% %&\\
% & T_{\mu,\delta}:=T^+_{\mu,\delta}\cup T^-_{\mu,\delta},\qquad
% 
% % &\\
%  %&\om':=\{x\in \mathbb R^3: d(x, \om)<R(\partial\om)/2\}\\
% \end{array}
% \end{equation}
% and
 \begin{equation}\lab{defgamma}\begin{aligned}
  & \Gamma_{\delta}:=\{x\in \partial\om: d(x, \Gamma)\le \delta\},\qquad S_{\mu,\delta}:=\{\sigma+t\n(\sigma):\ \sigma\in  \Gamma_{\delta}\}.\end{aligned}
 \end{equation}
 %We notice that  $\Sigma_{\mu,\delta}\cap\partial\om$ is indeed not depending on $\mu$.
%\begin{remark}\label{rem1}\rm $\Gamma_{\delta}$ is a (relatively closed connected) subset of $\partial\Omega$ that contains $\Gamma$, and it also contains the set of points of $\partial\om$ whose distance from $\partial\Gamma$ is less than $\eps$, for small enough $\eps$. Indeed, the projection on $\partial\om$ of a point $x\in T^{-}_{\mu,\delta}\setminus\partial_lS_\mu$ (resp.  $x\in T^+_{\mu,\delta}\setminus \partial_lS_\mu$) is in $\Gamma$ (resp. in $\partial\om\setminus\Gamma$). This follows from the fact that a point $x\in\mathbb R^3\setminus\om$ such that $d(x,\om)< R(\partial\om)$ projects on $\Gamma$ if and only if the corresponding projection ray goes through $S_\mu$, since we are within the reach of $\partial\om$.
 %\end{remark}
% We notice that  $\mathbf n$ can be extended to $\Omega'':=\{x\in\mathbb R^3: d(x,\partial\om)<R(\partial\om)/2\}\subset\om'$ in the usual way: $\mathbf n(x)=\mathbf n(\sigma(x))$, where $\sigma(x)$ is the unique projection of $x\in\om''$ on $\partial\om$, and therefore  $\mathbf n$ is a  $C^{1,1}(\Omega'',\mathbb R^3)$ vector field, so that there exists $K>1$ such that $|\nabla\mathbf n|+|\nabla^2\mathbf n|\le K$ a.e. on $\Omega''$. 
In case  $\Gamma=\partial \om$ (i.e., $\partial\Gamma=\emptyset$), we have $\Gamma_\delta\equiv\Gamma$ and $S_{\mu,\delta}\equiv S_\mu$, for any $0<\delta<\mu_0$. We stress that this case in encoded in Lemma \ref{eps4} and Lemma \ref{gammal} below. { On the other hand, if $\partial\Gamma\neq\emptyset$, then assumption \eqref{OMEGA} and the regularity properties of the distance function from $\partial\Gamma$  (see Remark \ref{distancefunction}) imply that  $\partial\Gamma_\delta$ is a $C^3$ one-dimensional submanifold of $\partial\Omega$. In particular, $\Gamma_\delta$ itself satisfies assumption \eqref{Gamma}}.

  Let us also introduce the following notation for  neighbors of $\om$ and $\partial\om$
 \begin{equation}\label{om''}
 \Omega':=\{x\in \mathbb R^3: d(x, \om)<\mu_0\}\quad\mbox{and}\quad\Omega'':=\{x\in\mathbb R^3: d(x,\partial\om)<\mu_0\}\subset\om'.
 \end{equation}
%where $\bb\nu_l$ denotes the outward unit vector normal to $\partial_l S_{\mu}$,
%\MMM which is the {\it lateral boundary}  of $S_\mu$. Notice that $\partial_l S_{\mu}$ is a $C^{1,1}$ manifold with boundary.   
We notice that  $\mathbf n$ can be extended to $\Omega''$ in the usual way: $\mathbf n(x)=\mathbf n(\sigma(x))$, where $\sigma(x)$ is the unique projection of $x\in\om''$ on $\partial\om$, and therefore  $\mathbf n$ is a  $C^{2}(\Omega'',\mathbb R^3)$ vector field, so that there exists $K>1$ such that \begin{equation}\label{KAPPA}|\nabla\mathbf n|+|\nabla^2\mathbf n|\le K\;\;  \mbox{in $\Omega''$}.\end{equation} 

Some auxiliary estimates are given by the next
\begin{lemma}\label{eps4} Assume 
\eqref{OMEGA} and
\eqref{Gamma}. Let %$0<\mu<\mu_0$ and
 $0<\delta<\mu_0$, where $\mu_0$ is defined by \eqref{mu0}.
%and where $\delta_0$ is the threshold from {\rm Lemma \ref{phidelta}}.
Let $\bb f\in H^2(\om'',\mathbb R^3)$ be such that $\bb f=0$ on $\Gamma_{\delta}$.
   %let $Z_\eps:=\{\sigma+t\mathbf n(\sigma): \sigma\in\Gamma\,\; |t|<\eps\}$.
%Letting $\widetilde{\Sigma}_{\eps,\lambda_*}:=\Sigma_{2\eps,2\lambda_*}\setminus \Sigma_{\eps,\lambda_*}$, this implies
Then there exists $\eps_0\in(0,\mu_0)$ such that for any $\eps\in(0,\eps_0)$ and any $\lambda\in(0,\delta)$ there holds
\begin{equation}\label{chart1}
\int_{S_{\eps,\lambda}} |\bb f|^2\,dx\le
2\eps^2 \int_{S_{\eps,\lambda}}|\nabla\bb f|^2\,dx,
\end{equation}
and if $\ \dfrac{\partial\bb f}{\partial\mathbf n}=0$  on $\Gamma_{\delta}$ as well,  there holds
\begin{equation}\label{chart2}
\int_{S_{\eps,\lambda}}|\bb f|^2\,dx
\le \frac{\eps^4}2 \int_{S_{\eps,\lambda}}|\nabla^2\bb f|^2\,dx.
\end{equation}
\end{lemma}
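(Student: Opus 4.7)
The plan is to perform a one-dimensional Poincar\'e-type estimate along the fibres of the normal bundle of $\Gamma_\delta$ and then integrate over the base $\Gamma_\lambda$, with a quantitative control of the tubular-coordinate Jacobian. Since $\mu_0<R(\partial\om)$, the map $\Phi(\sigma,t):=\sigma+t\,\n(\sigma)$ is a $C^2$-diffeomorphism of $\partial\om\times(-\mu_0,\mu_0)$ onto $\om''$, with Jacobian $J(\sigma,t)\to 1$ uniformly in $\sigma$ as $t\to 0$. I would first fix $\eps_0\in(0,\mu_0)$ so small that $1-\tfrac15\le J(\sigma,t)\le 1+\tfrac15$ on $\partial\om\times(-\eps_0,\eps_0)$, so that the ratio $(1+\tfrac15)/(1-\tfrac15)=\tfrac32$ is smaller than both $2$ and $3$; this is what makes the constants in \eqref{chart1}--\eqref{chart2} close. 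A Fubini/Sobolev-slicing argument in the coordinates $(\sigma,t)$ then yields, for $\mathcal H^2$-a.e.\ $\sigma\in\Gamma_\delta$, that the slice $\bb f_\sigma(t):=\bb f(\sigma+t\,\n(\sigma))$ lies in $H^2(-\eps_0,\eps_0)$ and hence is $C^1$; the traces $\bb f=0$ on $\Gamma_\delta$ (and, in the Taylor case, $\partial\bb f/\partial\n=0$ on $\Gamma_\delta$) translate into $\bb f_\sigma(0)=0$ and $\bb f_\sigma'(0)=\nabla\bb f(\sigma)\,\n(\sigma)=0$ respectively.

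For \eqref{chart1}, the fundamental theorem of calculus gives $\bb f_\sigma(t)=\int_0^t\nabla\bb f(\sigma+s\n(\sigma))\,\n(\sigma)\,ds$, so by Cauchy--Schwarz $|\bb f_\sigma(t)|^2\le|t|\int_{-\eps}^{\eps}|\nabla\bb f(\sigma+s\n(\sigma))|^2\,ds$; integration in $t\in(-\eps,\eps)$ produces the factor $\int_{-\eps}^{\eps}|t|\,dt=\eps^2$. Then integrating in $\sigma\in\Gamma_\lambda$, switching back to Cartesian coordinates via $\Phi$, and using the two-sided Jacobian bound yields
\[
\int_{S_{\eps,\lambda}}|\bb f|^2\,dx\,\le\,\tfrac32\,\eps^2\!\int_{S_{\eps,\lambda}}|\nabla\bb f|^2\,dx\,\le\,2\,\eps^2\!\int_{S_{\eps,\lambda}}|\nabla\bb f|^2\,dx.
\]
For \eqref{chart2} I would replace the FTC by Taylor's formula with integral remainder: since $\bb f_\sigma(0)=\bb f_\sigma'(0)=0$,
\[
\bb f_\sigma(t)=\int_0^t(t-s)\,\bb f_\sigma''(s)\,ds,\qquad \bb f_\sigma''(s)=\nabla^2\bb f(\sigma+s\n(\sigma))(\n(\sigma),\n(\sigma)),
\]
so $|\bb f_\sigma''|\le|\nabla^2\bb f|$ since $|\n|=1$; Cauchy--Schwarz gives $|\bb f_\sigma(t)|^2\le\tfrac{|t|^3}{3}\int_{-\eps}^{\eps}|\nabla^2\bb f|^2\,ds$, and integration in $t\in(-\eps,\eps)$ contributes $\int_{-\eps}^{\eps}\tfrac{|t|^3}{3}\,dt=\tfrac{\eps^4}{6}$. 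The very same tubular change of variables then produces the factor $\tfrac32\cdot\tfrac{\eps^4}{6}=\tfrac{\eps^4}{4}\le\tfrac{\eps^4}{2}$, which is \eqref{chart2}.

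The main obstacle I foresee is the rigorous justification of the slicing step: one has to show that for $\mathcal H^2$-a.e.\ $\sigma\in\Gamma_\delta$ the slice $\bb f_\sigma$ is indeed in $H^2$ (respectively $H^1$) of $(-\eps_0,\eps_0)$ and that the trace hypotheses on $\Gamma_\delta$ produce the pointwise values $\bb f_\sigma(0)=0$ and $\bb f_\sigma'(0)=0$ on the corresponding slice. This follows from the $C^2$-regularity of $\Phi$ (inherited from \eqref{OMEGA}) combined with a standard Fubini/trace argument for $H^2$ functions on the product domain $\partial\om\times(-\mu_0,\mu_0)$, but it needs to be handled with some care, in particular to make sure that the almost-everywhere statement really covers $\Gamma_\delta$ and not a strictly smaller set. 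Everything else is a routine calculus exercise once the uniform Jacobian bound on $(-\eps_0,\eps_0)$ has been pinned down.
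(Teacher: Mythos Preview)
Your proposal is correct and follows essentially the same route as the paper: a one-dimensional Poincar\'e/Taylor estimate along the normal fibres $t\mapsto\sigma+t\,\n(\sigma)$, integrated over $\Gamma_\lambda$ and converted back via a uniform bound on the tubular Jacobian. The only cosmetic differences are that the paper works in local charts $\bb\psi:B\to\partial\om$ (and then covers $\Gamma_\lambda$) rather than with the global normal map, and that it controls the Jacobian by the ratio bound $|\det D\bb\Phi(u,t)|/|\det D\bb\Phi(u,s)|\le 2$ rather than by your two-sided absolute bound; the resulting constants differ but both fit into \eqref{chart1}--\eqref{chart2}. Your identification of the one delicate point (that for $\mathcal H^2$-a.e.\ $\sigma$ the slice lies in $H^2$ and inherits the trace conditions) matches exactly what the paper appeals to, namely the $C^2$-regularity of the change of variables together with standard Sobolev/absolute-continuity-on-lines arguments.
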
 %%%%%psi%%%%
\begin{proof}
Let $B$ denote the unit ball in $\mathbb R^2$ and 
let  $\bb\psi\in C^{3}(\overline B; \mathbb R^3) $ be any local chart parametrizing
a subset of $\partial\om$. Let $B_{\lambda}:=\bb\psi^{-1}(\Gamma_{\lambda}\cap\bb\psi(B))$. Let $ B\times(-\eps,\eps)\ni (u,t)\mapsto\bb\Phi(u,t):=\bb\psi(u)+t\mathbf n(\bb\psi(u))$.  Up to covering  $\Gamma_{\lambda}$ with local charts, it is enough to show that \eqref{chart1} and \eqref{chart2} hold, for suitably small $\eps$, with $\bb\Phi(B_{\lambda}\times(-\eps,\eps))$ in place of $S_{\eps,\lambda}$.
 We have $|\det D \bb\Phi(u,0)|=|\partial_1\bb\psi(u)\wedge\partial_2\bb\psi(u)|>0$, where $D$ denotes the gradient in the variables $(u,t)$.   $|\det D \bb\Phi(u,0)|$ is bounded away from zero on $B$ 
and  $|\det D \bb\Phi(u,t)|=|\partial_1\bb\psi(u)\wedge\partial_2\bb\psi(u)|+o(1)$ as $t\to 0$, uniformly with respect to $u\in B$. 
We notice that
 $|D\bb\Phi|$ is bounded   on $B\times(-\eps,\eps)$. Moreover, it is not difficult to check that for any small enough $\eps$ there holds
\begin{equation}\label{detratio}
\frac{|\det D\bb\Phi(u,t)|}{|\det D \bb\Phi(u,s)|}\le 2\quad\mbox{for any $(u,t,s)\in B\times(-\eps,\eps)\times(-\eps,\eps)$}.
\end{equation}
%This can be seen for instance by taking advantage of the trace expansion of the determinant
%$6\det A=\mathrm{Tr}^3 A-3\;\mathrm{Tr}A^2\;\mathrm{Tr}A+2\;\mathrm{Tr}A^3$.
 
By the properties of Sobolev functions (see for instance \cite[Chapter1]{M}),
$\bb f\circ \bb \Phi\in H^2(B\times(-\eps,\eps))$, as $\bb\Phi$ is a $C^{2}$ homemorphism whose Jacobian is bounded away from $0$ and $+\infty$ on $B\times(-\eps,\eps)$, 
and $t\mapsto \bb f(\Phi(u,t))$ is absolutely continuous for $\mathcal L^2$-a.e. $u\in B$. Thus
%see \ref{Brezis, Adams, Mazya, what you wish} 
  %by the absolute continuity on lines of Sobolev functions, by \eqref{detratio} and by Jensen inequality we get
  for $\mathcal L^2$-a.e $u\in B$ we get
\[
\bb f(\bb\Phi(u,t))=\bb f(\bb \Phi(u,0))+t\int_0^1\frac{d(\bb f\circ\bb\Phi)}{ds}\left(u,st\right)\,ds,
\]
so that by Jensen inequality, and since $\bb f=0$ on $\Gamma$ and $|\partial_t\bb\Phi|=1$, we obtain
\[
|\bb f(\bb\Phi(u,t))|^2\le |t|\int_{0\wedge t}^{0\vee t}|\nabla\bb f(\bb\Phi(u,s))|^2\,ds.
\]
By the latter inequality, by changing variables and by \eqref{detratio} we get for any small enough $\eps$
\[\begin{aligned}
&\int_{ \Phi(B_{\lambda}\times(-\eps,\eps))}\left|\bb f\right|^2\,dx=\int_{B_{\lambda}}\int_{-\eps}^{\eps}
|\bb f(\bb\Phi(u,t))|^2\,|\det D \bb\Phi(u,t)|\,dt\,du \\
&\qquad\le \int_{B_{\lambda}} \int_{-\eps}^{\eps} |t| \left(\int_{0\wedge t}^{0\vee t}|\nabla \bb f(\bb\Phi(u,s))|^2\,ds\right)|\det D\bb\Phi(u,t)|\,dt\,du\\
%&\qquad\le\int_{B}\int_0^{2\eps} t^2\left(\int_0^1 |\nabla\widetilde\w(\bb\Phi(u,st))|^2\,ds\right) |\det\nabla\bb\Phi(u,t)|\,dt\,du\\
%&\qquad\le\int_B \int_0^{2\eps} t^2\left(\int_0^t|\nabla\widetilde \w(\bb\Phi(u,\tau))|^2\,\frac{d\tau}{t}\right)|\det\nabla\bb\Phi(u,t)|\,dt\,du\\
&\qquad\le \int_{B_{\lambda}} \int_{-\eps}^{\eps} 2|t| \int_{0\wedge t}^{0\vee t}|\nabla \bb f(\bb\Phi(u,s))|^2\,|\det D\bb\Phi(u,s)|\,{ds}\,dt\,du\\
&\qquad\le \int_{B_{\lambda}} \int_{-\eps}^{\eps} 2|t|\,dt \int_{-\eps}^{\eps}|\nabla \bb f(\bb\Phi(u,s))|^2\,|\det D\bb\Phi(u,s)|\,{ds}\,du\\
&\qquad\le 2\eps^2\int_{B_{\lambda}}\int_{-\eps}^{\eps}|\nabla\bb f(\bb\Phi(u,\tau))|^2\,|\det D\bb\Phi(u,s)|\,ds\,du=2\eps^2\iint_{\bb\Phi(B_{\lambda}\times(-\eps,\eps))}|\nabla \bb f|^2\,dx.
\end{aligned}\]

%\[\begin{aligned}
%&\int_{ \Phi(B\times(0,2\eps))}\left|\widetilde\w\right|^2\,dx=\int_{B}\int_0^{2\eps}
%|\widetilde \w(\bb\Phi(u,t))|^2\,|\det\nabla \bb\Phi(u,t)|\,dt\,du \\
%&\qquad\le\int_{B}\int_0^{2\eps} t^2\left(\int_0^1 |\nabla\widetilde\w(\bb\Phi(u,st))|^2\,ds\right) |\det\nabla\bb\Phi(u,t)|\,dt\,du\\
%&\qquad\le\int_B \int_0^{2\eps} t^2\left(\int_0^t|\nabla\widetilde \w(\bb\Phi(u,\tau))|^2\,\frac{d\tau}{t}\right)|\det\nabla\bb\Phi(u,t)|\,dt\,du\\
%&\qquad\le \int_B \int_0^{2\eps} 2t \int_0^t|\nabla\widetilde \w(\bb\Phi(u,\tau))|^2\,|\det\nabla\bb\Phi(u,\tau)|\,{d\tau}\,dt\,du\\
%&\qquad\le 4\eps^2\int_B\int_0^{2\eps}|\nabla\widetilde \w(\bb\Phi(u,\tau))|^2\,|\det\nabla\bb\Phi(u,\tau)|\,d\tau\,du=4\eps^2\iint_{\bb\Phi(B\times(0,2\eps))}|\nabla \widetilde \w|^2\,dx
%\end{aligned}\] 

Similarly,   under the further null trace assumption of $\frac{\partial\bb f}{\partial\n}$ on $\Gamma_{\mu,\delta}$ we deduce that
\[
\frac{d}{dt}\left(\bb f(\bb\Phi(u,t))\right){\Big{|}_{t=0}}=\frac{\partial\bb f}{\partial \bb n}(\bb\psi(u))
\]
vanishes  as well  and we obtain for $\mathcal L^2$-a.e $u\in B$, since $\partial^2_t\bb\Phi=0$,
\[\begin{aligned}
|\bb f(\bb\Phi(u,t))|&=\left|\bb f (\bb\Phi(u,0))+t\,\frac{d}{dt}\left(\bb f(\bb\Phi(u,t))\right){\Big{|}_{t=0}}
+\frac12 t^2\int_0^1 \frac{d^2(\bb f\circ\bb\Phi)}{ds^2}\left(u,st\right)\,ds\right|\\
&\le \frac12 t^2\int_0^1|\nabla^2\bb f(\bb\Phi(u,st))|\,ds,
\end{aligned}\]
%so that  we get
%\[
%|\bb f(\bb\Phi(u,t))|\le \frac12 t^2\int_0^1|\nabla^2\bb f(\bb\Phi(u,st))|\,ds
%\]
thus
\[
|\bb f(\bb\Phi(u,t))|^2\le \frac{|t|^3}{2}\int_{0\wedge t}^{0\vee t}|\nabla^2\bb f(\bb\Phi(u,s))|^2\,ds.
\]
Arguing as above we get for any small enough $\eps$
\[\begin{aligned}
&\int_{ \Phi(B_{\lambda}\times(-\eps,\eps))}\left|\bb f\right|^2\,dx=\int_{B_{\lambda}}\int_{-\eps}^{\eps}
|\bb f(\bb\Phi(u,t))|^2\,|\det D \bb\Phi(u,t)|\,dt\,du \\
&\quad\le
\int_{B_{\lambda}}\int_{-\eps}^{\eps} \frac{|t|^3}2\left(\int_{0\wedge t}^{0\vee t}|\nabla^2\bb f(\Phi(u,s))|^2\,ds\right)|\det D \bb\Phi(u,t)|\,dt\,du\\
&\quad\le 
\int_{B_{\lambda}}\int_{-\eps}^{\eps} {|t|^3}\,dt\int_{-\eps}^\eps|\nabla^2\bb f(\Phi(u,s))|^2\,|\det D \bb\Phi(u,s)|\,ds\,du
=\frac{\eps^4}2 \int_{ \Phi(B_{\lambda}\times(-\eps,\eps))}\left|\nabla^2\bb f\right|^2\,dx,
\end{aligned}\] 
as desired.
\end{proof}
\KKK

We are ready for the statement of the approximation result
{\begin{lemma}\lab{gammal}
 Assume 
\eqref{OMEGA} and
\eqref{Gamma}.
 Let  $0<\delta<\mu_0$, where $\mu_0$ is defined by \eqref{mu0}.
% Let $0<\lambda_0 < R(\partial\Gamma)$ and
 Let $ \v\in H^{1}(\om,\mathbb R^{3})$ such that $div\, \v=0$ a.e. in $\om$ and $\v=0$ on $\Gamma_{\delta}$. Then there exists
 %an open set $\om '\subset\mathbb R^3$ such that $\overline\om\subset \om ' $ 
  a sequence
 $(\v_{j})_{j\in\mathbb N}\subset C^1(\overline{\om '},\R^3)$ such that $\mathrm{div}\, \v_{j}=0$ a.e. in $\om '$, $\v_{j}=0$ on $\Gamma$ and $\v_{j}\to \v$ in   $H^{1}(\om,\mathbb R^{3})$.
 \end{lemma}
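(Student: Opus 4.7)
The approach is to realize $\v$ as $\curl\w$ for a suitable vector potential $\w$, then approximate $\w$ in $H^2$ by smooth fields that vanish in a neighborhood of $\Gamma$; taking curls will then produce the desired $\v_j$. First I apply Lemma \ref{w=0} with $\Gamma_\delta$ in place of $\Gamma$---admissible since $\Gamma_\delta$ inherits the regularity \eqref{Gamma}, as observed in the paper. This yields $\w\in H^2(\om,\R^3)$ with $\w=0$ on $\Gamma_\delta$, $\w\cdot\n=0$ on $\partial\om$ and $\curl\w=\v$ a.e. in $\om$. Since $\curl\w=\v=0$ on $\Gamma_\delta$, Lemma \ref{dernormwtang} gives $\frac{\partial}{\partial\n}(\w\wedge\n)=0$ on $\Gamma_\delta$, which combined with $\w=0$ on $\Gamma_\delta$ and $\partial_\n\n=0$ in a tubular neighborhood forces $\partial_\n\w\wedge\n=0$ on $\Gamma_\delta$: only the normal component of $\partial_\n\w$ may still fail to vanish. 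To kill this component on a slightly smaller subset $\Gamma_{\delta'}$ (for a fixed $\delta'\in(0,\delta)$) I exploit the gauge freedom $\w\mapsto\w+\nabla\varphi$ (which preserves the curl) by choosing $\varphi\in H^3(\om)$ with $\varphi=\partial_\n\varphi=0$ on $\Gamma_{\delta'}$ and $\partial_\n^2\varphi=-\partial_\n\w\cdot\n$ on $\Gamma_{\delta'}$; near the boundary this can be realized by $\tfrac12 d(x,\partial\om)^2\,\widetilde g(\sigma(x))$ for a suitable extension $\widetilde g$ of the boundary datum, localized by a smooth cutoff and patched globally by a partition of unity. Replacing $\w$ by $\w+\nabla\varphi$ (still called $\w$), I obtain an $H^2$ field with $\curl\w=\v$, $\w\equiv 0$ and $\partial_\n\w\equiv 0$ on $\Gamma_{\delta'}$.

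Next I extend $\w$ to $\widetilde\w\in H^2(\R^3,\R^3)$ (e.g.\ by reflection across $\partial\om$, which is $H^2$-compatible across $\Gamma_{\delta'}$ thanks to the simultaneous vanishing of $\w$ and $\partial_\n\w$ there). I pick smooth cutoffs $\eta_\eps\in C^\infty(\R^3)$ with $\eta_\eps\equiv 0$ on the Euclidean tube $T_\eps:=\{d(\cdot,\Gamma)\le\eps\}$, $\eta_\eps\equiv 1$ outside $T_{2\eps}$, and $|\nabla^k\eta_\eps|\le C\eps^{-k}$ for $k=1,2$, and set $\w_\eps:=\eta_\eps\widetilde\w$. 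To prove $\w_\eps\to\w$ in $H^2(\om')$ one expands $\|(1-\eta_\eps)\widetilde\w\|^2_{H^2(\om')}$ by the Leibniz rule; the critical contributions are
\[
\int_{\om'}|\nabla^2\eta_\eps|^2|\widetilde\w|^2\le C\eps^{-4}\!\int_{T_{2\eps}}|\widetilde\w|^2,\qquad \int_{\om'}|\nabla\eta_\eps|^2|\nabla\widetilde\w|^2\le C\eps^{-2}\!\int_{T_{2\eps}}|\nabla\widetilde\w|^2.
\]
For $\eps$ small enough $T_{2\eps}$ sits inside a normal tube $S_{c'\eps,c\eps}$ with $c\eps<\delta'$, so Lemma \ref{eps4}(2) applied to $\widetilde\w$ (using $\widetilde\w=\partial_\n\widetilde\w=0$ on $\Gamma_{\delta'}$) controls the first term by $C\!\int_{T_{2\eps}}|\nabla^2\widetilde\w|^2$, while Lemma \ref{eps4}(1) applied to $\nabla\widetilde\w$ (which also vanishes on $\Gamma_{\delta'}$) controls the second by the same quantity. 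As the tube shrinks, $\int_{T_{2\eps}}|\nabla^2\widetilde\w|^2\to 0$, and the remaining Leibniz terms are dominated by shrinking-tube integrals of $\widetilde\w$, $\nabla\widetilde\w$, $\nabla^2\widetilde\w$ which also vanish.

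Finally I mollify $\w_\eps$ at scale $\rho\ll\eps$ to obtain $C^\infty$ approximants on $\overline{\om'}$ that still vanish in the inner tube $T_{\eps-\rho}$; a diagonal sequence then produces $\w_j\to\w$ in $H^2(\om')$ with each $\w_j\in C^\infty(\overline{\om'},\R^3)$ vanishing in a neighborhood of $\Gamma$. Setting $\v_j:=\curl\w_j\in C^\infty(\overline{\om'},\R^3)\subset C^1(\overline{\om'},\R^3)$, automatically $\dv\v_j=0$ in $\om'$, $\v_j\equiv 0$ in a neighborhood of $\Gamma$ (so $\v_j=0$ on $\Gamma$), and $\v_j=\curl\w_j\to\curl\w=\v$ in $H^1(\om)$, which is the required conclusion. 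The delicate point is the gauge construction of the first paragraph: $\nabla\varphi$ must belong to $H^2(\om)$, i.e.\ $\varphi\in H^3$, while the prescribed second-order Cauchy datum $\partial_\n\w\cdot\n|_{\Gamma_{\delta'}}$ lies only in $H^{1/2}$, so the construction of $\varphi$ demands a careful choice of the extension $\widetilde g$ and attention to anisotropic (weighted) Sobolev regularity in a tubular neighborhood of $\partial\om$.
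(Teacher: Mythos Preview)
Your overall strategy coincides with the paper's: produce a vector potential $\widetilde\w\in H^2$ via Lemma~\ref{w=0}, extend to $\mathbb R^3$, multiply by a cutoff that vanishes near $\Gamma$, mollify, and take the curl. The divergence point is how you control the cutoff error, and this is where your argument has a genuine gap.

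You want to force $\partial_\n\w=0$ on $\Gamma_{\delta'}$ via a gauge correction $\w\mapsto\w+\nabla\varphi$ so that the $\eps^{-4}$ term from $|\nabla^2\eta_\eps|^2|\widetilde\w|^2$ can be killed by Lemma~\ref{eps4}\eqref{chart2}. But your explicit candidate $\varphi(x)=\tfrac12\,d(x,\partial\om)^2\,\widetilde g(\sigma(x))$ is \emph{not} in $H^3$ for generic $\widetilde g\in H^{1/2}(\partial\om)$: in tubular coordinates $(y,t)$ one has $\partial_t^2\nabla_y\varphi=\nabla_y\widetilde g$, whose $L^2$-norm on the collar requires $\widetilde g\in H^1(\partial\om)$, which you do not have (the datum $\partial_\n\w\cdot\n$ is only $H^{1/2}$). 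The vague appeal to ``anisotropic weighted Sobolev regularity'' does not fix this; what would fix it is the higher-order trace extension theorem, namely surjectivity of $H^3(\om)\ni\varphi\mapsto(\varphi,\partial_\n\varphi,\partial_\n^2\varphi)\in H^{5/2}\times H^{3/2}\times H^{1/2}(\partial\om)$, which produces the required $\varphi$ but via a genuinely different construction. As written, this step is incomplete.

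The paper sidesteps this entirely. It never tries to make $\partial_\n\widetilde\w$ vanish; instead it only estimates $\curl(\eta_{\eps,\lambda}\widetilde\w)$ (not all of $\nabla^2(\eta_{\eps,\lambda}\widetilde\w)$), and uses a cutoff $\eta_{\eps,\lambda}$ whose gradient is split into normal and tangential parts with separate scales $\eps^{-1}$ and $\lambda^{-1}$. In the worst term $\widetilde\w\wedge\partial_i\nabla\eta_{\eps,\lambda}$, the $\eps^{-2}$ contribution from the normal part couples only to $\widetilde\w\wedge\n$, and \emph{that} field automatically satisfies both $\widetilde\w\wedge\n=0$ and $\partial_\n(\widetilde\w\wedge\n)=0$ on $\Gamma_\delta$ by Lemma~\ref{dernormwtang}, so Lemma~\ref{eps4}\eqref{chart2} applies to it without any gauge fix. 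Your route, had the gauge step been completed via trace extension, would yield a somewhat cleaner endgame (a blunt Euclidean-tube cutoff and $H^2$ convergence of the potentials), at the cost of importing a nontrivial extension theorem; the paper's route is more hands-on but self-contained. A minor additional point: you invoke \eqref{chart1} for $\nabla\widetilde\w$, which is only $H^1$, while Lemma~\ref{eps4} is stated for $H^2$ fields; the inequality still holds for $H^1$ fields with vanishing trace, but you should note this.
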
%%%%%Q_%%%%%%%%%%
 \begin{proof} % For every $0< \mu< R(\partial\om),\ 0<\delta< \lambda_0$ \MMM why $2\lambda_0$? \KKK let
% \begin{equation}\begin{array}{ll}\lab{SpartS}
% & S_{\mu}:=\{\sigma+t\n(\sigma):\ \sigma\in \Gamma,\ |t|< \mu\}\\
% &\\
% &\partial_l S_{\mu}:=\{S+t\n(\sigma):\ \sigma\in \partial\Gamma,\ |t|< \mu\}\\
% &\\
% & Q_{\mu,\delta}:=\{ s+\tau\bb\nu_l(s): s\in \partial_l S_{\mu},\ 0<\tau <\delta\}\\
% &\\
% &S_{\mu,\delta}:= S_{\mu}\cup  Q_{\mu,\delta}\\
% &\\
%  &\om':=\{x\in \mathbb R^3: d(x, \om)<R(\partial\om)/2\}\MMM \mbox{ usato la reach invece che $1$ } \KKK\\
% \end{array}
% \end{equation}
%%%lambdaT_%%%%
Let $0<\mu<\mu_0$. 
% In view of assumption \ref{Gamma}, let us assume first that $\partial\Gamma\not=\emptyset$. Only at the end of the proof we will treat the much easier case $\Gamma=\partial\om$. 
% Let $0<\mu<\min\{R(\partial\om),R(\partial\Gamma)\}/2$ and $0<\delta<\delta_0$, where $\delta_0$ is given by Lemma \ref{phidelta}.
Recalling  \eqref{SpartS}, \eqref{TSigma} and  \eqref{defgamma}, 
notice that if $\lambda\in(0,\delta/2)$,  $\eps\in(0,\mu/2)$  then there hold
$S_{2\eps,2\lambda}\subset S_{\mu,\delta}$ and
$ S_{2\eps,2\lambda}\cap\partial\om\subset\Gamma_{\delta}$, which will be crucial for the proof: the projection on $\partial \om$ of any point in $S_{2\eps,2\lambda}$ lies on $\Gamma_{\delta}$. On the other hand it  clear that $S_{2\eps,2\lambda}\subset\om''$, where $\om''$ is defined by \eqref{om''}, thus $\n$ is well defined on $S_{2\eps,2\lambda}$.

 %It is readily seen that there exists $\lambda\in (0,{\lambda_0}/2)$ such that $T_{\mu,2\lambda}\cap\partial\om\subset\Gamma_{\lambda_0} $ for every $\mu\in (0,R(\partial\Gamma))$, which will be crucial for the proof.
   Let $\zeta\in C^{2}(\mathbb R)$ be defined by
  \begin{equation*}	\label{poly}
\zeta(\xi):=\left\{\begin{array}{ll}
 (3\xi^2-2\xi^3)^2\quad &\hbox{if}\ \xi\in [0,1]\\
0\quad&\hbox{if}\ \xi<0\\
 1\quad &\hbox{if}\ \xi>1.\\
\end{array}\right.
\end{equation*}
We introduce some notation: $ S^{*}_{\eps,\lambda}:=S_{2\eps,\lambda}\setminus S_{\eps,\lambda},\  S^{**}_{\eps,\lambda}:=S_{2\eps,2\lambda}\setminus S_{2\eps,\lambda}$, $\tilde S_{\eps,\lambda}:=S^{*}_{\eps,\lambda}\cup S^{**}_{\eps,\lambda}$.
Let $\lambda\in(0,\delta/2)$, let  $\eps\in(0,\lambda\wedge(\mu/2))$ and let 
%\MMM nella terza riga $\zeta$ si calcola anche su valori negativi\KKK
\begin{equation*}
\eta_{\eps,\lambda}(x):=\!\!\left\{ \begin{array}{ll}&\!\!\!\! 0\ \hbox{if}\ x\in S_{\eps,\lambda}\\
&\\
&\displaystyle\!\!\!\!\!\zeta \!\left (\frac{d(x,\partial\om)-\eps}{\eps}\right )\ \hbox{if}\ x\in S^*_{\eps,\lambda}\\
&\\
&\displaystyle\!\!\!\!\!\zeta \!\left (\frac{d(x,\partial\om)-\eps}{\eps}\right )\!+\!\zeta\!\left (\frac{d(\sigma(x),\partial\Gamma)-\lambda}{\lambda}\! \right )\!\!\left(1-\zeta \!\left (\frac{d(x,\partial\om)-\eps}{\eps}\!\right )\!\!\right)\ \hbox{if}\ x\in S^{**}_{\eps,\lambda}\\
&\\
&\!\!\!\! 1\ \hbox{otherwise in}\ \mathbb R^3\\
\end{array}\right.
\end{equation*}
We notice that in the particular case $\partial\Gamma=\emptyset$, we have $S_{\eps,\lambda}=S_\eps$, $S_{\eps,\lambda}^*=S_{2\eps}\setminus S_\eps$, $S_{\eps,\lambda}^{**}=\emptyset$, and in fact $\eta_{\eps,\lambda}$ does not depend on $\lambda$.  
Taking advantage of the $C^{3}$ regularity of $d(\cdot, \partial\om)$ in $\om''\cap\{x\in\mathbb R^3:d(x,\partial\om)>\eps/2\}$, of the $C^2$ regularity of $\sigma$ in $\om''$ and of the $C^{2}$ regularity of $x\mapsto d(\sigma(x),\partial\Gamma)$ in $S_{2\eps,2\lambda}\setminus S_{2\eps,\lambda}$ (see Remark \ref{distancefunction}),  it can be easily checked that $\eta_{\eps,\lambda}\in C^{1,1}(\mathbb R^3)$. Moreover,  we have $|\nabla d(x,\partial\om)|\le 1$ and $|\nabla^2 d(x,\partial\om)|\le C_*/\eps$ on $S^*_{\eps,\lambda}$ (and similarly, $|\nabla(d(\sigma(x),\partial\Gamma))|\le C_*$,  and $|\nabla^2(d(\sigma(x),\partial\Gamma))|\le C_*/\lambda$ on $S^{**}_{\eps,\lambda}$) for some $C_*>$ that is independent of $\eps$ and $\lambda$. Taking advantage of these distance estimates, a computation shows  that there is a constant $C>0$ (not depending on $\eps$ and $\lambda$) such that 
 \begin{equation}\lab{eta}
 \begin{aligned}
&   \nabla \eta_{\eps,\lambda}\cdot\n=0\ \hbox{in} \ S_{\eps,\lambda},\qquad  2\,|\nabla \eta_{\eps,\lambda}\cdot\n|\le \dfrac C{\eps}\ \hbox{in}\ \tilde S_{\eps,\lambda}\\
& \n\wedge(\nabla \eta_{\eps,\lambda}\wedge\n)=0\ \hbox{in} \ S^{*}_{\eps,\lambda},\qquad 2\,|\n\wedge(\nabla \eta_{\eps,\lambda}\wedge\n)|\le \dfrac C{\lambda}<\dfrac C{\eps}\ \hbox{in} \ S^{**}_{\eps,\lambda},\\
&2|\nabla(\nabla\eta_{\eps,\lambda}\cdot\mathbf n)|\le \dfrac{C}{\eps^2}\ \hbox{in}\ S^{*}_{\eps,\lambda},\qquad 2|\nabla(\n\wedge(\nabla\eta_{\eps,\lambda}\wedge\mathbf n))|\le \dfrac{C}{\lambda\eps}\ \hbox{in} \ S^{**}_{\eps,\lambda}.
\end{aligned}
\end{equation}
%for some $C> 0$.\\

Thanks to Lemma \ref{w=0}, we find  $\widetilde\w\in H^{2}(\Omega,\R^3)$ such that $\widetilde\w=0$ on $\Gamma_{\delta}$ and $\hbox{curl}\, \widetilde\w=\v$ a.e. in $\om$ (in particular, $\mathrm{curl}\,\widetilde \w=0$ on $\Gamma_{\delta}$). Let us consider  a $H^2(\mathbb R^3, \mathbb R^3)$ extension of $\widetilde\w$, still denoted by $\widetilde\w$, and set $$\w_{\eps,\lambda}:=\eta_{\eps,\lambda}\widetilde\w.$$ Therefore, $\w_{\eps,\lambda}\in H^2(\om',\mathbb R^3),\ \w_{\eps,\lambda}= 0$ on $\Gamma_{\delta}$  and
%on $T_{\eps,\lambda}\cap\partial\om$ (hence on $\Gamma_{\mu,\delta}$) and 
\begin{equation}\lab{curlweps}
\curl\ \w_{\eps,\lambda}= \eta_{\eps,\lambda}\curl{\widetilde\w}\ -\widetilde\w\wedge\nabla \eta_{\eps,\lambda}
\end{equation}
so that $\curl\ \w_{\eps,\lambda}=0$ on $\Gamma_{\delta}$ as well.
 Moreover, for $i=1,2,3$,
\begin{equation}\lab{gradcurlweps}\begin{array}{ll}
&\partial_i\hbox{curl}\ \w_{\eps,\lambda}= \partial_i\eta_{\eps,\lambda}\curl{\widetilde\w}\ +\eta_{\eps,\lambda}\partial_i\curl{\widetilde\w}-\partial_i\widetilde\w\wedge\nabla \eta_{\eps,\lambda}-\widetilde\w\wedge\partial_i\nabla \eta_{\eps,\lambda}\\
\end{array}
\end{equation}
and it is readily seen that, as $\eps\to 0^+$, 
\begin{equation}\label{twoconvergence}\eta_{\eps,\lambda}\curl{\widetilde\w}\to \curl{\widetilde\w}\quad\mbox{ and }\quad \eta_{\eps,\lambda}\partial_i\curl{\widetilde\w}\to
\partial_i\curl{\widetilde\w}
\end{equation}
 in $L^2(\om,\mathbb R^3)$. 
 We  claim that 
\begin{equation}\label{fourconvergence}
\widetilde\w\wedge\nabla \eta_{\eps,\lambda}\to 0,\qquad \partial_i\eta_{\eps,\lambda}\curl{\widetilde\w}\to0,\qquad\partial_i\widetilde\w\wedge\nabla \eta_{\eps,\lambda}\to0,\qquad\widetilde\w\wedge\partial_i\nabla \eta_{\eps,\lambda}\to 0
\end{equation}
in $L^2(\om,\mathbb R^3)$ as $\eps\to 0^+$, which will then imply, along with \eqref{curlweps}-\eqref{gradcurlweps}-\eqref{twoconvergence}, that
\begin{equation}\lab{curlwtov}
\curl\w_{\eps,\lambda}\to \curl\widetilde\w=\v\ \ \hbox{in}\ H^1(\om,\mathbb R^3)\ \hbox{as}\ \eps\to 0^+.
\end{equation}
In order to prove the claim, we separately treat the four terms in \eqref{fourconvergence}, by taking into account \eqref{eta} and  
 Lemma \ref{dernormwtang}, which entails
$\widetilde\w=\widetilde\w\wedge\mathbf n=\frac{\partial}{\partial\n}(\widetilde\w\wedge\n)=0$ on $\Gamma_{\delta}$ (thus, $\nabla(\widetilde\w\wedge\n)=0$ on $\Gamma_{\delta}$ as well). We get, thanks to the usual decomposition $\mathbf a=(\mathbf a\cdot\mathbf n)\,\mathbf n+\mathbf n\wedge(\mathbf a\wedge\mathbf n)$, thanks to Lemma \ref{eps4} and to \eqref{KAPPA}, and by recalling that $\nabla\eta_{\eps,\lambda}=0$ outside $\tilde S_{\eps,\lambda}$ we get, \begin{equation*}\lab{convl2}\begin{aligned}
\displaystyle\int_{\om}|\widetilde\w\wedge\nabla \eta_{\eps,\lambda}|^2\,dx&\le\int_{\tilde S_{\eps,\lambda}}|\widetilde\w\wedge\nabla \eta_{\eps,\lambda}|^2\,dx
%+\int_{S^{**}_{\eps,\lambda}}|\widetilde\w\wedge\nabla \eta_{\eps,\lambda}|^2\le
\\
&\le \displaystyle 4\int_{\tilde S_{\eps,\lambda}}|\nabla\eta_{\eps,\lambda}\cdot\n|^2|\widetilde\w\wedge\n|^2\,dx+4\int_{S^{**}_{\eps,\lambda}}  |\widetilde\w|^2\,|\n\wedge(\nabla\eta_{\eps,\lambda}\wedge\n)|^2\\
&\le\displaystyle \frac{C^2}{\lambda^2}\int_{S^{**}_{\eps,\lambda}}|\widetilde\w|^2\,dx+\frac{C^2}{\eps^2}\int_{S_{2\eps,2\lambda}}|\widetilde\w|^2\,dx\\
&\le\displaystyle \frac{C^2}{\lambda^2}\int_{S^{**}_{\eps,\lambda}}|\widetilde\w|^2\,dx+ 2C^2\int_{{S}_{2\eps,2\lambda}}|\nabla\widetilde\w|^2\,dx\\
\end{aligned}
\end{equation*}
so that $\widetilde\w\wedge\nabla \eta_{\eps,\lambda}\to 0$ in $L^2(\om,\mathbb R^3)$ as $\eps\to0^+$. 
On the other hand, still making use of Lemma \ref{eps4},
\begin{equation*}\lab{convl2der1}\begin{aligned}
&\displaystyle\int_{\om}|\partial_i\widetilde\w\wedge\nabla \eta_{\eps}|^2\,dx\le\int_{\tilde S_{\eps,\lambda}}|\partial_i\widetilde\w\wedge\nabla \eta_{\eps,\lambda}|^2\,dx\\
&\qquad\le \displaystyle 4\int_{\tilde S_{\eps,\lambda}}\left ( |\partial_i\widetilde\w|^2\,|\n\wedge\nabla\eta_{\eps,\lambda}\wedge\n|^2+|\nabla\eta_\eps\cdot\n|^2|\partial_i\widetilde\w\wedge\n|^2\right )\,dx\\
&\qquad\le \displaystyle \frac{C^2}{\lambda^{2}}\int_{\tilde S_{\eps,\lambda}}|\partial_i\widetilde\w|^2\,dx
+\frac{C^2}{\eps^2}\int_{S_{2\eps,2\lambda}}|\partial_i(\widetilde\w\wedge\n)|^2\,dx 
\displaystyle + \frac{C^2}{\eps^2}\int_{{S}_{2\eps,2\lambda}}|\widetilde\w\wedge\partial_i\n|^2\,dx
\\&\qquad
\le \displaystyle \frac{C^2}{\lambda^{2}}\int_{\tilde S_{\eps,\lambda}}|\partial_i\widetilde\w|^2\,dx
+\frac{C^2}{\eps^2}\int_{{S}_{2\eps,2\lambda}}|\nabla(\widetilde\w\wedge\n)|^2\,dx 
\displaystyle + \frac{K^2C^2}{\eps^2}\int_{{S}_{2\eps,2\lambda}}|\widetilde\w|^2\,dx
\\&\qquad
\le \displaystyle \frac{C^2}{\lambda^{2}}\int_{\tilde S_{\eps,\lambda}}|\partial_i\widetilde\w|^2\,dx
+{8C^2}\int_{{S}_{2\eps,2\lambda}}|\nabla^2(\widetilde\w\wedge\n)|^2\,dx 
\displaystyle + {8K^2C^2}\int_{{S}_{2\eps,2\lambda}}|\nabla\widetilde\w|^2\,dx
\\&\qquad\le \frac{C^2}{\lambda^{2}}\int_{\tilde S_{\eps,\lambda}}|\partial_i\widetilde\w|^2\,dx+ 24K^2\displaystyle C^2\int_{{S}_{2\eps,2\lambda}}(|\widetilde\w|^2+|\nabla \widetilde\w|^2+|\nabla^2 \widetilde\w|^2)\,dx
\end{aligned}
\end{equation*}
hence $\partial_i\widetilde\w\wedge\nabla \eta_{\eps,\lambda}\to 0$ in $L^2(\om,\mathbb R^3)$ as $\eps\to0^+$.
Similarly, taking advantage of the fact that $\curl\widetilde\w=0$ on $\Gamma_{\mu,\delta}$ and of Lemma \ref{eps4}, we get as $\eps\to 0^+$
\begin{equation*}\lab{convrot}
\displaystyle\int_{\om} |\partial_i\eta_{\eps,\lambda}\curl\widetilde\w|^2\,dx\le \frac{C^2}{\eps^2}\int_{\tilde{S}_{\eps,\lambda}}|\curl\widetilde\w|^2\,dx \le {8C^2} \int_{{S}_{2\eps,2\lambda}}|\nabla\mathrm{curl}\,\widetilde\w|^2\,dx\to 0,
\end{equation*}
thus $\partial_i\eta_{\eps,\lambda}\curl\widetilde\w\to 0$ in $L^2(\om,\mathbb R^3)$ as $\eps\to0^+$.
Moreover,  by applying the usual decomposition in normal and tangential part to $\nabla\eta_{\eps,\lambda}$ we get
\begin{equation*}\label{fourmore}\begin{aligned}
\widetilde\w\wedge\partial_i(\nabla\eta_{\eps,\lambda})&=
\partial_i(\nabla \eta_{\eps,\lambda}\cdot\mathbf n)\,\widetilde\w\wedge\n+(\nabla \eta_{\eps,\lambda}\cdot\mathbf n)\,\widetilde\w\wedge\partial_i\mathbf n\\&\qquad+\widetilde\w\wedge(\partial_i\mathbf n\wedge(\nabla \eta_{\eps,\lambda}\wedge\mathbf n))+\widetilde\w\wedge(\mathbf n\wedge\partial_i(\nabla \eta_{\eps,\lambda}\wedge\mathbf n))\\
&=\partial_i(\nabla \eta_{\eps,\lambda}\cdot\mathbf n)\,\widetilde\w\wedge\n+(\nabla \eta_{\eps,\lambda}\cdot\mathbf n)\,\widetilde\w\wedge\partial_i\mathbf n+\widetilde\w\wedge(\partial_i(\n\wedge(\nabla\eta_{\eps,\lambda}\wedge\n))),
\end{aligned}\end{equation*}
and by taking again advantage of \eqref{eta}  we find %\tilde S_{\eps,\lambda}%\tilde S_{\eps,\lambda}%%
\[
\begin{aligned}
&\int_{\om}|\partial_i(\nabla\eta_{\eps,\lambda}\cdot\mathbf n)\,\widetilde\w\wedge\mathbf n|^2\,dx \le \int_{\tilde S_{\eps,\lambda}}|\nabla(\nabla\eta_{\eps,\lambda}\cdot\mathbf n)|^2\,|\widetilde\w\wedge\n|^2\,dx\le\frac{C^2}{4\eps^4}\int_{ S_{2\eps,2\lambda}}|\widetilde\w\wedge\mathbf n|^2\,dx,\\
&\int_\om|(\nabla\eta_{\eps,\lambda})\,\widetilde\w\wedge\partial_i\mathbf n|^2\,dx\le  K^2 \int_{\tilde S_{\eps,\lambda}} |\nabla\eta_{\eps,\lambda}\cdot\mathbf n|^2\,|\widetilde\w|^2\,dx\le\frac{K^2C^2}{4\eps^2}\int_{ S_{2\eps,2\lambda}}|\widetilde\w|^2\,dx,\\
%&\int_\om|\widetilde\w\wedge(\partial_i\mathbf n\wedge(\nabla\eta_{\eps,\lambda}\wedge\mathbf n))|^2\,dx\le K^2 \int_{\widetilde{\Sigma}_{\eps,\lambda}} |\nabla\eta_{\eps,\lambda}\wedge\mathbf n|^2\,|\widetilde\w|^2\,dx\le \frac{K^2C^2}{4\lambda^2}\int_{\widetilde{\Sigma}_{\eps,\lambda}}|\widetilde\w|^2\,dx,\\
%&\int_\om|\widetilde\w\wedge(\mathbf n\wedge\partial_i(\nabla\eta_{\eps,\lambda}\wedge\mathbf n))|^2\,dx\le \int_{\widetilde{\Sigma}_{\eps,\lambda}} |\nabla(\nabla\eta_{\eps,\lambda}\wedge\mathbf n)|^2\,|\widetilde\w|^2\,dx\le\frac{C^2}{4\eps^2\lambda^2}\int_{\widetilde{\Sigma}_{\eps,\lambda}}|\widetilde\w|^2\,dx\\
&\int_\om\!|\widetilde\w\wedge(\partial_i(\n\wedge(\nabla\eta_{\eps,\lambda}\wedge\mathbf n)))|^2\,dx\le \!\int_{\tilde S_{\eps,\lambda}}\! |\nabla(\n\wedge(\nabla\eta_{\eps,\lambda}\wedge\mathbf n))|^2\,|\widetilde\w|^2\,dx\le\frac{C^2}{4\eps^2\lambda^2}\int_{ S_{2\eps,2\lambda}}\!|\widetilde\w|^2dx
\end{aligned}
\]
and we see that all these integrals are  reduced to the ones of the previous estimates, so that indeed by using  Lemma \ref{eps4} they all vanish as $\eps\to 0^+$, showing that  $\widetilde\w\wedge\partial_i(\nabla\eta_{\eps,\lambda})\to0$ in $L^2(\om,\mathbb R^3)$ as $\eps\to0^+$.
This proves the claim, so that \eqref{curlwtov} holds true.

%\begin{equation*}\begin{array}{ll}
%&\widetilde\w\wedge\nabla_i(\nabla\eta_\eps)= \widetilde\w\wedge\nabla_i(\nabla_\n\eta_\eps\n+\n\wedge\nabla\eta_\eps\wedge\n)=\\
%&\\
%&=\widetilde\w\wedge (\nabla_i(\nabla_\n\eta_\eps)\n+\nabla_\n\eta_\eps\nabla_i\n+\\
%&\\
%&+\nabla_i\n\wedge\nabla\eta_\eps\wedge\n+\n\wedge\nabla_i(\nabla\eta_\eps\wedge\n))=\\
%&\\
%&(\nabla_i\nabla_\n\eta_\eps)\widetilde\w\wedge\n+(\nabla_\n\eta_\eps)\widetilde\w\wedge\nabla_i\n
%+\\
%&\\
%&+\widetilde\w\wedge\nabla_i\n\wedge(\nabla\eta_\eps\wedge\n)+\widetilde\w\wedge\n\wedge\nabla_i(\nabla\eta_\eps\wedge\n)\\
%\end{array}
%\end{equation*}
%we obtain by arguing as in the previous estimates
%\begin{equation}
%\displaystyle\int_\om|\widetilde\w\wedge\nabla_i(\nabla\eta_\eps)|^2\,dx\le \frac{C}{\eps^2}\int_{\Sigma_{2\eps,2\lambda}\setminus \Sigma_{\eps,\lambda}}(|\widetilde\w|^2+\eps^{-2}|\widetilde\w\wedge\n|^2)\,dx \to 0
%\end{equation}
%as $\eps\to 0^+$ 
%We conclude that $\w_{\eps,\lambda}\to \widetilde\w$ in $H^2(\om;\mathbb R^3)$ as $\eps\to0^+$.
%\MMM dovrebbe bastare che i curl convergono in H1\KKK
 
 We stress that $\widetilde\w_{\eps,\lambda}$ and $\curl\widetilde\w_{\eps,\lambda}$ vanish in $S_{\eps,\lambda}$,
%on $T_{\eps,\lambda}\cap\partial\om$, hence there exists $\lambda\in (0,\lambda)$ such that $\widetilde\w_{\eps,\lambda}=\curl\widetilde\w_{\eps,\lambda}=0$ on $\Gamma_\lambda$
hence in an    open neighbor of $\Gamma$ in $\mathbb R^3$.
Let now $(\eps_j)_{j\in\mathbb N} \subset (0,+\infty)$ be  such that $\eps_j\to 0^+$ as $j\to +\infty$,  let $(\rho_j)_{j\in\mathbb N}$ be a sequence of smooth mollifiers such that the support of $\rho_j$ is so small that $\widetilde \w_{\eps_j,\lambda}\ast\rho_j$ still vanishes on a neighbor of $\Gamma$. Then, we define $\widetilde\w_{j,\lambda}:=\widetilde\w_{\eps_j,\lambda}\ast \rho_j$,  and $\v_j:=\curl\widetilde \w_{j,\lambda}$. It is readily seen that  $\v_{j}\in C^1(\overline{\om '},\R^3)$, that $\dv\, \v_{j}=0$ in $\om '$ and that  $\v_{j}=0$ on $\Gamma$. By recalling \eqref{curlwtov} we get also $\v_{j}\to \v$ in   $H^{1}(\om,\mathbb R^{3})$ thus concluding the proof. \end{proof}

\begin{remark}\label{C21}\rm
If $\Gamma=\partial\om$ (i.e., if $\partial\Gamma=\emptyset$), then it is enough to assume $C^{2,1} $ regularity of $\partial\om$ in Lemma \ref{gammal}. Indeed, we still get boundedness of $\nabla^2\n$ in $\om''$, which allows the latter proof to carry over.
\end{remark}
\KKK

The final step of this section is another suitable approximation property of divergence-free $H^1$ vector fields, that is required to treat the case $\partial\Gamma\neq\emptyset$. It is stated in Lemma \ref{GlvsG}. It also requires some  preliminary lemmas and  some further notation.

 Suppose that $\partial\Gamma\neq\emptyset$. For every $0< \mu< \mu_0$, where $\mu_0$ is defined by \eqref{mu0}, let
 \begin{equation*}\begin{array}{ll}%\lab{SpartS}
 %& S_{\mu}:=\{\sigma+t\n(\sigma):\ \sigma\in \Gamma,\ |t|\le \mu\},\\
 %&\\
 \partial_l S_{\mu}:=\{\sigma+t\n(\sigma):\ \sigma\in \partial\Gamma,\ |t|\le \mu\}.
 \end{array}\end{equation*}
 Since $\n\in C^2(\partial\om)$, we see that $\partial_lS_\mu$ is a compact $C^{2}$ manifold with boundary. It is the {\it lateral boundary}  of $S_\mu$, and we denote by  $\bb\nu_l$  the corresponding outward unit  normal vector to $\partial_l S_{\mu}$.
 Moreover,  for every $0< \delta< \mu_0$ and $0<\mu<\mu_0$, 
  let $\bb\phi_{\mu,\delta}: \partial_l S_{\mu}\times [-\delta,\delta]\to \mathbb R^3$ be defined by \begin{equation}\label{fidelta}\bb\phi_{\mu,\delta}(s,\tau):= s+\tau\bb\nu_l(s).\end{equation}    It is clear that each point of $\bb\phi_{\mu,\delta}(\partial_l S_\mu\times[-\delta,\delta])$ is within the reach of both $\partial\om$ and $\partial\Gamma$. $\bb\phi_{\mu,\delta}$ inherits the $C^1$ regularity of $\bb\nu_l$.
 We recall the following simple property.
\begin{lemma}\lab{phidelta} Assume 
\eqref{OMEGA} and
\eqref{Gamma}, with $\partial\Gamma\neq\emptyset$. Let $\mu<\mu_0$.
There exists  $\delta_0\in(0,\mu_0)$ such that the map $\bb\phi_{\mu,\delta}:\partial_l S_\mu\times[-\delta_0,\delta_0]\to\mathbb R^3$, defined by  \eqref{fidelta}, is one-to-one.
\end{lemma}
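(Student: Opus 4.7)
The plan is to prove this as a standard tubular neighborhood statement for the compact $C^2$ surface $\partial_l S_\mu \subset \mathbb R^3$, whose outward unit normal field is precisely $\bb\nu_l$. I would proceed in three steps: local injectivity via the inverse function theorem, uniform local injectivity via compactness, and global injectivity by a contradiction/compactness argument.

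First, I would establish local injectivity. Fix $s_0\in \partial_l S_\mu$. Since $\partial\Gamma\in C^3$ and $\n\in C^2(\partial\om)$, the surface $\partial_l S_\mu$ is a compact $C^{2}$ manifold (with boundary), and $\bb\nu_l$ is $C^1$. Pick a $C^2$ local chart $\bb\psi:B\subset\mathbb R^2\to\partial_l S_\mu$ with $\bb\psi(u_0)=s_0$ (extending $\partial_l S_\mu$ slightly, e.g.\ by allowing $|t|<\mu+\eta$ in its definition, so that boundary points of $\partial_l S_\mu$ become interior points of a slightly larger surface; this does not affect $\bb\nu_l$ or the map $\bb\phi_{\mu,\delta}$ restricted to the original domain). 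Consider $\bb\Psi(u,\tau):=\bb\psi(u)+\tau\,\bb\nu_l(\bb\psi(u))$. The columns of $D\bb\Psi(u_0,0)$ are $\partial_{u_1}\bb\psi(u_0),\,\partial_{u_2}\bb\psi(u_0),\,\bb\nu_l(s_0)$: the first two span $T_{s_0}\partial_l S_\mu$, while the third is unit and orthogonal to this plane by definition of $\bb\nu_l$. Hence $D\bb\Psi(u_0,0)$ is invertible, and the inverse function theorem provides an open neighborhood $V_{s_0}$ of $s_0$ in $\partial_l S_\mu$ and a radius $\eta_{s_0}\in(0,\mu_0)$ such that $\bb\phi_{\mu,\delta}$ is a $C^1$ diffeomorphism from $V_{s_0}\times(-\eta_{s_0},\eta_{s_0})$ onto its image, and in particular injective there.

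Second, by compactness of $\partial_l S_\mu$ I would extract a finite subcover $\{V_{s_i}\}_{i=1}^N$ and set $\eta_*:=\min_{i=1,\dots,N}\eta_{s_i}>0$. This gives a uniform \emph{local} injectivity radius: for every $s\in \partial_l S_\mu$ there is $i$ with $s\in V_{s_i}$, and $\bb\phi_{\mu,\delta}$ is injective on $V_{s_i}\times(-\eta_*,\eta_*)$.

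Third, I would obtain global injectivity by contradiction. Assume no $\delta_0$ works. Then for each $n\in\mathbb N$ there exist pairs $(s_n,\tau_n)\ne(s_n',\tau_n')$ in $\partial_l S_\mu\times[-1/n,1/n]$ with $\bb\phi_{\mu,1/n}(s_n,\tau_n)=\bb\phi_{\mu,1/n}(s_n',\tau_n')$, i.e.\ $s_n+\tau_n\bb\nu_l(s_n)=s_n'+\tau_n'\bb\nu_l(s_n')$. Since $\partial_l S_\mu$ is compact, up to subsequences $s_n\to s_*$ and $s_n'\to s_*'$; since $|\tau_n|,|\tau_n'|\le 1/n\to 0$ and $\bb\nu_l$ is bounded, passing to the limit yields $s_*=s_*'$. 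Choosing $i$ with $s_*\in V_{s_i}$, for $n$ large enough both $(s_n,\tau_n)$ and $(s_n',\tau_n')$ belong to $V_{s_i}\times(-\eta_*,\eta_*)$, contradicting injectivity of $\bb\phi$ on this set. Hence some $\delta_0>0$ as required exists.

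The main potential obstacle is the handling of boundary points of $\partial_l S_\mu$ when invoking the inverse function theorem, but as explained in the first step this is bypassed by a trivial extension of the parametrizing interval for $t$, which preserves the $C^2$ structure and the definition of $\bb\nu_l$. Everything else is a routine compactness/inverse-function-theorem argument.
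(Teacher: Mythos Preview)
Your proof is correct and follows essentially the same approach as the paper: a contradiction argument via compactness of $\partial_l S_\mu$, extracting convergent subsequences that collide at a single point, and then invoking local invertibility there via the inverse function theorem. The paper is slightly more economical in that it skips your Step~2 (the finite-cover uniform local radius) and applies the inverse function theorem only at the limit point $s_*$; your additional care about boundary points of $\partial_l S_\mu$ is a nice touch that the paper leaves implicit.
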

\begin{proof} 
Assume by contradiction that there is no $\delta_0>0$ with the required property. Then there exist sequences $(s_j)_{j\in\mathbb N}\subset\partial_l S_\mu$, $(s'_j)_{j\in\mathbb N}\subset\partial_l S_\mu$ and $(t_j)_{j\in\mathbb N}\subset(0,1/n)$, $(t'_j)_{j\in\mathbb N}\subset(0,1/n)$ such that $(s_j,t_j)\neq (s'_j,t'_j)$ and
$\bb\phi_{\mu,\delta}(s_j,t_j)=\bb\phi_{\mu,\delta}(s'_j,t'_j)$ for any $j\in\mathbb N$ (hence, $s_j\neq s'_j$ for any $j\in\mathbb N$). Since $\partial_l S_\mu$ is compact, up to subsequences we have $s_j\to  s\in\partial_lS_\mu$,  $s'_j\to s'\in\partial_lS_\mu$, $t_j\to 0$ and $t'_j\to0$. Therefore, the continuity of $\bb\phi_{\mu,\delta}$ implies $\bb\phi_{\mu,\delta}(s,0)=\bb\phi_{\mu,\delta}(s',0)$, i.e., $s'=s$. This means that  $(s,0)$ has no open neighbor in $\mathbb R^3$ where $\bb\phi_{\mu,\delta}$ is invertible. Let $B$ denote the unit open ball in $\mathbb R^2$:  for a given $C^2$ local chart $u\ni B\mapsto {\bb\psi}(u)\in\mathbb R^3$ describing a neighbor of $s$ on the surface $\partial_l S_\mu$,  such that $\bb\psi(0)=s$, we have $|\det D\bb\phi_{\mu,\delta}(\bb\psi(0),0)|=|\partial_1\bb\psi(0)\wedge\partial_2\bb\psi(0)|\neq 0$ (by the regularity of the surface), where $D$ denotes the gradient in the variable $(u,t)$. Therefore, the $C^1(B\times(-\delta,\delta))$ mapping $\bb\phi_{\mu,\delta}\circ(\bb\psi,\bb{i})$ has non vanishing Jacobian at the point $(s,0)=\bb\phi_{\mu,\delta}(\bb\psi(0),0)$, hence it is invertible in a neighbor of such point,
a contradiction.
%If $s+\tau\bb\nu_l(s)=s'+\tau'\bb\nu_l(s')$ then $\tau=d(s+\tau\bb\nu_l(s),\partial_l S_{\mu})=d(s'+\tau'\bb\nu_l(s'),\partial_l S_{\mu})=\tau'$ that is if we assume $s\not =s'$
%\begin{equation*}
%0< |s-s'|=|\tau(\bb\nu_l(s)-\bb\nu_l(s'))|\le \delta \|\nabla\nu\|_{\infty}|s-s'| < |s-s'|
%\end{equation*}
%a contradiction.
\end{proof}

 Still for $\partial\Gamma\neq\emptyset$, for every
$0< \mu< \mu_0$
 and  for every $0< \delta <  \delta_0$, where $\delta_0$ is the threshold provided by Lemma \ref{phidelta}, we define
\begin{equation}\lab{TSigma}\begin{array}{ll}
& T^\pm_{\mu,\delta}:=\{ s\pm\tau\bb\nu_l(s): s\in \partial_l S_{\mu},\ 0\le\tau \le\delta\},\qquad
% &T^-_{\mu,\delta}:=\{ s+\tau\bb\nu_l(s): s\in \partial_l S_{\mu},\ 0\le\tau \le\delta\}\\
 %&\\
  T_{\mu,\delta}:=T^+_{\mu,\delta}\cup T^-_{\mu,\delta}.
 % &\\
  %&\om':=\{x\in \mathbb R^3: d(x, \om)<R(\partial\om)/2\}\\
 \end{array}
 \end{equation}

We have the following
%The final approximation result is the following. 

%\begin{lemma}\label{lastlemma}
%Let $\w\in H^2(\om;\mathbb R^3)$ such that $\w=\frac{\partial}{\partial\n} (\w\wedge\n)=0$ on $\Gamma$. Then there exists a sequence $\w_\lambda\in H^2(\om;\mathbb R^3)$ such that $\w_\lambda=\frac{\partial}{\partial\n} (\w_\lambda\wedge\n)=0$ on $\Gamma_\lambda$ and $\w_\lambda\to \w$ in $H^2(\om;\mathbb R^3)$ as $\lambda\to 0^+$.
%\end{lemma} 
\begin{lemma}\label{lastlemma} 
Assume 
\eqref{OMEGA} and
\eqref{Gamma}, with $\partial\Gamma\neq\emptyset$.
Let $\w\in H^2(\om,\mathbb R^3)$ be such that $\w=\frac{\partial}{\partial\n} (\w\wedge\n)=0$ on $\Gamma$. Then there exist a vanishing sequence $(\lambda_j)_{j\in\mathbb N}\subset(0,\mu_0)$ and a sequence $(\w_{j})_{j\in\mathbb N}\subset H^2(\om,\mathbb R^3)$ such that $\w_j=\frac{\partial}{\partial\n} (\w_j\wedge\n)=0$ on $\Gamma_{\lambda_j}$ and $\w_{j}\to \w$ in $H^2(\om,\mathbb R^3)$ as $j\to+\infty$.
\end{lemma}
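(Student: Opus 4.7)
My plan is to define $\w_j$ by transporting $\w$ along a $C^2$ flow that is tangent to $\partial\om$ and pushes points of $\partial\om\setminus\Gamma$ near $\partial\Gamma$ into $\Gamma$, while simultaneously twisting the values by a rotation that realigns the local normal direction. If $\partial\Gamma=\emptyset$ then $\Gamma_\lambda\equiv\Gamma$ and $\w_j=\w$ works, so I assume $\partial\Gamma\neq\emptyset$.

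First I would construct a $C^2$ vector field $\mathbf X$ on $\mathbb R^3$, compactly supported near $\partial\Gamma$, with $\mathbf X\cdot\n=0$ on $\partial\om$ and $\mathbf X|_{\partial\Gamma}=-\bb\nu_l$, using the tubular structure around $\partial_l S_\mu$ and a partition of unity. Let $\Psi_t$ denote its flow; for small $|t|$, $\Psi_t$ is a $C^2$ diffeomorphism of $\mathbb R^3$ preserving $\partial\om$ (hence $\om$), and since $|\mathbf X|=1$ on $\partial\Gamma$ there exist $c>1$ and $t_*>0$ such that $\Psi_{ct}(\Gamma_t)\subset\Gamma$ for every $t\in(0,t_*)$. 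Next I would observe that the hypotheses $\w=0$ and $\frac{\partial}{\partial\n}(\w\wedge\n)=0$ on $\Gamma$ together force $\nabla\w=\alpha\,\n\otimes\n$ on $\Gamma$ for some scalar $\alpha$: the first gives $\nabla\w=\frac{\partial\w}{\partial\n}\otimes\n$ (vanishing tangential gradient), and the second reduces on $\Gamma$ to $\frac{\partial\w}{\partial\n}\wedge\n=0$. To preserve this rank-one factorization under transport, I would extend $\n$ by $\n(x):=\n(\sigma(x))$ to a tubular neighborhood of $\partial\om$ and define, via Rodrigues' formula, a $C^2$ mapping $R:\mathbb R^3\times(-t_*,t_*)\to SO(3)$ with $R(x,0)=I$, $R(x,t)=I$ outside a tubular neighborhood of $\partial\om$, and $R(x,t)\n(\Psi_t(x))=\n(x)$ near $\partial\om$; this is well-defined and smooth for small $t$ since $\n(\Psi_t(x))$ stays close to $\n(x)$.

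For a vanishing sequence $\lambda_j\in(0,t_*)$ the approximation is then
\[
\w_j(x):=R(x,c\lambda_j)\,\w(\Psi_{c\lambda_j}(x)),\qquad x\in\om.
\]
The regularity $\w_j\in H^2(\om,\mathbb R^3)$ is immediate from the chain rule. For $x\in\Gamma_{\lambda_j}$, writing $y:=\Psi_{c\lambda_j}(x)\in\Gamma$, one has $\w(y)=0$, so $\w_j(x)=0$, and the chain rule yields $\nabla\w_j(x)=R(x,c\lambda_j)\,\nabla\w(y)\,D\Psi_{c\lambda_j}(x)$. Substituting $\nabla\w(y)=\alpha(y)\n(y)\otimes\n(y)$, using $R(x,c\lambda_j)\n(y)=\n(x)$ by construction, and using the tangency identity $D\Psi_{c\lambda_j}(x)^T\n(y)=\beta(x)\n(x)$ (which holds since $\Psi_t$ preserves $\partial\om$, so $D\Psi_t$ preserves the tangent bundle), one gets $\nabla\w_j(x)=\alpha(y)\beta(x)\,\n(x)\otimes\n(x)$, which combined with $\w_j=0$ on $\Gamma_{\lambda_j}$ yields $\frac{\partial}{\partial\n}(\w_j\wedge\n)=0$ on $\Gamma_{\lambda_j}$ via Lemma \ref{dernormwtang}. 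Finally, $\w_j\to\w$ in $H^2(\om,\mathbb R^3)$ will follow from the $C^2$-convergence $\Psi_{c\lambda_j}\to\mathrm{id}$ and $R(\cdot,c\lambda_j)\to I$ as $\lambda_j\downarrow0$ together with standard continuity of composition with $C^2$-diffeomorphisms and pointwise multiplication by smooth matrices in Sobolev spaces.

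The hardest part is the coupling between the flow and the rotation: a naive transport $\w\circ\Psi_{c\lambda_j}$ alone does not preserve the normal-derivative condition, because the point $y\in\Gamma$ has a different normal $\n(y)$ from $\n(x)$ at $x\in\Gamma_{\lambda_j}$. The rotation $R$ is designed precisely to realign these two directions, so that the rank-one factorization $\alpha\,\n\otimes\n$ is faithfully transported from $y$ to $x$ and the second boundary condition survives the construction.
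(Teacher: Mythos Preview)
Your approach is correct and conceptually parallel to the paper's, but the implementation is genuinely different. The paper does not use a flow: it writes down an explicit shift map $\psi_\lambda$ on $\partial\om$ that translates points near $\partial\Gamma$ by $-2\gamma_\lambda(d^2(\cdot,\partial\Gamma))\,(\mathbf t\wedge\n)$, then sets $g_\lambda(x)=\sigma(\psi_\lambda(\sigma(x)))$ and defines
\[
\mathbf h_\lambda(x)=\big[\w(q_\lambda(x))\cdot\n(g_\lambda(x))\big]\,\n(x)+\n(x)\wedge\big[\w(q_\lambda(x))\wedge\n(g_\lambda(x))\big],\qquad q_\lambda(x)=g_\lambda(x)+b(x)\n(g_\lambda(x)),
\]
afterwards gluing $\mathbf h_\lambda$ to $\w$ by a cutoff $\eta$. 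Your rotation $R(x,t)$ plays the role of the paper's linear map $v\mapsto (v\cdot\n_y)\n_x+\n_x\wedge(v\wedge\n_y)$: both send $\n(\Psi_t(x))$ to $\n(x)$, which is exactly what is needed to transport the rank--one factorization $\nabla\w=\alpha\,\n\otimes\n$ on $\Gamma$ to the same structure on $\Gamma_{\lambda_j}$. The paper's map is \emph{not} a rotation, but that is immaterial for the argument. Your flow formulation has the pleasant feature that no cutoff is needed: $\Psi_t$ and $R(\cdot,t)$ are automatically the identity away from the support of $\mathbf X$, so the construction is self--gluing.

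The one place where you should expect to do real work is your assertion that $\Psi_{ct}(\Gamma_t)\subset\Gamma$ for some $c>1$ and all small $t$. In the paper this is isolated as a separate lemma (Lemma~\ref{auxiliar}) and given a careful proof using the tubular structure $T_{\mu,\delta}^{\pm}$ around $\partial_lS_\mu$; the argument is elementary but not one--line, because one must control how the ``inward'' direction $-\bb\nu_l$ varies along $\partial\Gamma$ (via $\mathrm{Lip}(\bb\nu_l)$). For your flow version the analogous estimate is that $d(\Psi_s(y),\partial\Gamma)$, measured along $\partial\om$, decreases at rate at least $1-O(s)$ for $y$ near $\partial\Gamma$; this follows from $\mathbf X|_{\partial\Gamma}=-\bb\nu_l$ together with a Lipschitz bound on $\mathbf X$, and a Gronwall step then gives the claim. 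Everything else in your sketch (the $C^2$ regularity of $\mathbf X$, $\Psi_t$ and $R$; the identity $D\Psi_t(x)^T\n(y)\parallel\n(x)$ from tangency; the $H^2$ convergence via $C^2$--continuity of composition and multiplication) is routine under the $C^3$ assumption on $\partial\om$ and $\partial\Gamma$.
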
 

\begin{proof} 
Let   $\delta_0$ be the threshold from Lemma \ref{phidelta}. %n particular, $\delta_0<\mu_0$, where $\mu_0$ is defined by \eqref{mu0}.
  Let $\mu$ and $\delta$ be such that $$0<3\delta<\mu<\frac12\,\delta_0\qquad\mbox{and}\qquad 4\delta<\frac{1}{\mathrm{Lip}(\bb\nu_l)},$$
where 
$ \mathrm{Lip}(\bb\nu_l):=\sup\left\{\frac{|\bb\nu_l(z)-\bb\nu_l(z')|}{|z-z'|}:\;z\in\partial_lS_\mu,\; z'\in\partial_lS_\mu,\, z\neq z'\right\}<\!+\infty,$ since $\bb\nu_l\in C^1(\partial_lS_\mu)$.

 We define for every $y\in \Gamma_{2\delta}$ and for every $0< \lambda<  (\delta/2)\wedge1$ % \MMM ci ho messo la distanza al quadrato, mi pare che vada tutto bene quasi senza alcun cambiamento\KKK
\begin{equation*} \psi_\lambda(y):=\left\{\begin{array}{ll}
y-2\gamma_\lambda(d^2(y,\partial\Gamma)){(\bf t\wedge \bf n)}(s(y))&\quad \hbox{if}\ y\in
\{x\in\Gamma_{2\delta}: d(x,\partial\Gamma)\le 2\delta\}\\
&\\
 y&\quad \hbox{otherwise in}\ \Gamma_{2\delta}\\
\end{array}\right.
\end{equation*}
where $s(y)\in \partial\Gamma$ is the unique nearest point of $\partial\Gamma$ to $ y\in \{x\in\Gamma_{2\delta}:\ d(x,\partial\Gamma)\le 2\delta\}$ (recalling \eqref{mu0} so that $2\delta<\delta_0<\mu_0$ implies that $y$ is within the reach of $\partial\Gamma$), and  ${\bf t}$ is the unit tangent vector to $\partial\Gamma$ (positively orienting $\partial\Gamma$ with respect to $\mathbf n$, so that $(\mathbf t\wedge\n)(s(y))$ coincides with the outward unit vector $\bb\nu_l(s(y))$ to $\partial_lS_{\mu}$). Moreover, here $\gamma_\lambda\in C^2(\mathbb R)$ is a decreasing  cutoff function such that
$\gamma_\lambda(\xi)= 0$ if $  \xi\ge \delta$ and
$\gamma_\lambda(\xi)= \lambda$ if  $\xi\le \delta/2$. We stress that $d^2(\cdot,\partial\Gamma)$ is a $C^3$ function on the set $\{x\in\mathbb R^3: d(x,\partial\Gamma)<\mu_0\}$, so that since $s(y)=y-\tfrac12\nabla(d^2(y,\partial\Gamma))$, $s(\cdot)$ is $C^2$ on such set (see Remark \ref{distancefunction}).

The following property holds: there exists $\lambda_0\in(0,(\delta/2)\wedge1)$ such that, for any $\lambda<\lambda_0$,  \begin{equation}\label{crucial}\sigma(\psi_\lambda(y))\in\Gamma\quad\mbox {for every $y\in \Gamma_\lambda$},\end{equation}
where $\sigma(\cdot)$ denotes as usual the unique projection  on $\partial\om$ (since $2\gamma_\lambda\le 2\lambda<\mu_0$, then $\psi_\lambda(y)$ is within the reach of $\partial\om$). This crucial property is proved in a separate statement, i.e., in Lemma \ref{auxiliar} below.
  %while  we complete  the proof of Lemma \ref{lastlemma}

{

Let us consider an $H^2(\mathbb R^3, \mathbb R^3)$ extension of $\w$, still denoted by $\w$.
% Let moreover $$\w_0:=(1-\eta)\w,$$
 %so that $\w_0\in H^{2}(\mathbb R^3,\mathbb R^3)$ and $\w_0$ is supported in $\mathring S_{2\mu,2\delta}$.
For any $x\in S_{2\mu,2\delta}$ (thus within the reach of $\partial\om$, since $2\mu<\delta_0<\mu_0$), 
we  introduce the signed distance function
\[
b(x):=\left\{\begin{array}{ll} d(x,\partial\om)\quad&\mbox{ if $x\notin\om$}\\
-d(x,\partial\om)\quad&\mbox{ if $x\in\om$,}\end{array}\right.
\]
and we notice that $b(x)$ is the unique real number such that $x=\sigma(x)+t(x)\n(\sigma(x))$. We notice that   $b\in C^3(\om'')$: indeed, we have $\n=\nabla b$.
%it has the same regularity of $d^2(\cdot,\partial\om)$ in $\om''\setminus\partial\om$, and it is in fact $C^3$ in the whole of $\om''$,  
See also \cite[Theorem 3.1]{DZ} for regularity results about the signed distance function. 
We let $g_\lambda:S_{2\mu,2\delta}\to\partial\om$ be defined by $g_\lambda(x):=\sigma(\phi_\lambda(\sigma(x)))$ and we further define
\begin{equation*}\lab{hl}\begin{aligned}
\mathbf h_\lambda(x):&=\left[\w(g_\lambda(x)+b(x)\n(g_\lambda(x)))\cdot\n(g_\lambda(x))\right]\;\n(\sigma(x))\\
&\qquad+\n(\sigma(x))\,\wedge\, \left[ \w(g_\lambda(x)+b(x)\n(g_\lambda(x)))\wedge\n(g_\lambda(x))\right].
\end{aligned}
\end{equation*}
We note that $\n$ is extended to a $C^2(\om'',\mathbb R^3)$ vector field  in the usual way, see \eqref{om''}, so that $\n(\sigma(x))=\n(x)$.
We let $ \eta\in C^{2}(\mathbb R^3)$ be a cutoff function, such that  $0\le \eta\le 1$,  $\eta\equiv 0\ \hbox{in}\  S_{\mu,\delta},$ and the support of $ 1-\eta$ is contained in $\mathring S_{2\mu,2\delta}$. Then we define, for every $x\in \mathbb R^3$,
\begin{equation*}\lab{wstarlambda}
\w^*_\lambda(x)=(1-\eta(x)) \mathbf h_\lambda(x),
\end{equation*}
so that indeed $\w_\lambda^*$ is supported in $\mathring S_{2\mu,2\delta}$, and
\begin{equation*}\lab{wlambda}
\w_\lambda(x):= \w^*_\lambda(x)+\eta(x) \w(x).
\end{equation*}

We claim that $\w_\lambda\in H^2(\om,\mathbb R^3)$ for any small enough $\lambda$ and that  $\w_\lambda\to \w$ in $H^2(\om,\mathbb R^3)$ as $\lambda\to 0$. Indeed, recalling the definition of $\gamma_\lambda$ and the fact that $d^2(\cdot,\partial\Gamma)$ and $s(\cdot)$ are $C^2$ functions on $\{x\in\mathbb R^3: d(x,\partial\Gamma)<\mu_0\}$, it is readily seen that $\psi_\lambda\in C^{2}(\Gamma_{2\delta})$ and that $\psi_\lambda(y)\to y$ in $C^{2}(\Gamma_{2\delta})$ as $\lambda\to0$. 
We also observe that $\sigma:\om''\to\partial\om$ is a $C^2$ function by assumption \eqref{OMEGA}. Therefore, we obtain $ \psi_\lambda\circ\sigma\to \sigma $ in $C^2(S_{2\mu,2\delta})$ as $\lambda\to 0$, and similarly $g_\lambda=\sigma\circ \psi_\lambda\circ\sigma\to\sigma$ in $C^2(S_{2\mu,2\delta})$ and $\n\circ g_\lambda\to \n\circ\sigma=\n$ in $C^2(S_{2\mu,2\delta})$. Since $x= \sigma(x)+b(x)\n(\sigma(x))$, for small enough $\lambda$ we see that the mapping $$S_{2\mu,2\delta}\ni x\mapsto q_\lambda(x):= g_\lambda(x)+b(x)\n(g_\lambda(x))\in \om''$$ is a $C^2$ homeomorphism whose Jacobian is bounded away from $0$, and moreover by the previous remarks it converges to the identity as $\lambda\to 0$ in $C^2(S_{2\mu,2\delta})$. Since $\w\in H^2(\mathbb R^3,\mathbb R^3)$, we obtain by the properties of Sobolev functions (as in Lemma \ref{eps4}), that $\w\circ q_\lambda\in H^2(\mathring S_{2\mu,2\delta},\mathbb R^3)$, and  moreover it is easy to check that
 $\w\circ q_\lambda\to\w$ in $H^2(\mathring S_{2\mu,2\delta},\mathbb R^3)$ as $\lambda\to 0 $. Taking the product with the smooth cutoff function $1-\eta$ (supported on $S_{2\mu,2\delta}$),  we deduce that $(1-\eta)(\w\circ q_\lambda)\to (1-\eta)\w$ in $H^2(\mathbb R^3,\mathbb R^3)$. Moreover, since we also have $\mathbf n\circ g_\lambda\to\n\circ\sigma=\n$ in $C^2(S_{2\mu,2\delta})$, we obtain
\[
\w^*_\lambda=(1-\eta)\mathbf h_\lambda\to(1-\eta)\left\{[\w\cdot\n]\,\n+\n\wedge\,[\w\wedge\n]\right\}=(1-\eta)\w
\]
in $H^2(\mathbb R^3,\mathbb R^3)$ as $\lambda\to 0$. 
%Still because of the presence of the smooth cutoff $\eta$, we conclude that 
%$\w_\lambda^*\to\w_0=(1-\eta)\w$ in $H^2(\mathbb R^3,\mathbb R^3)$, 
Thus  $\w_\lambda\to \w$ in $H^2(\om,\mathbb R^3)$ as $\lambda\to 0$. The claim is proved.

We shall now prove that  $\w_\lambda=\frac{\partial}{\partial\n} (\w_\lambda\wedge\n)=0$ on $\Gamma_\lambda$ for any small enough $\lambda$. Indeed, let $\lambda<\lambda_0$ be small enough, such that $\w_\lambda\in H^2(\om,\mathbb R^3)$. If $x\in \Gamma_\lambda$, then by the property \eqref{crucial} we get $g_\lambda(x)\in\Gamma$, and since $b(x)=0$ and $\w=0$ on $\Gamma$ we directly obtain 
%and since $\sigma(\psi_\lambda(x))\in \Gamma$ we get $h_\lambda(x+t\n(x))=0$ that is 
$\mathbf h_\lambda(x)=\w^*_\lambda (x)=0$ on $\Gamma_\lambda$. But $\eta(x)=0$ as well (because $\eta=0$ on $S_{\mu,\delta}$ and $\lambda<\delta/2$ so that $\Gamma_\lambda\subset\Gamma_{\delta}\subset S_{\mu,\delta}$), hence $\w_\lambda(x)$=0. 
On the other hand for every $x\in \Gamma_\lambda$, we have $g_\lambda(x)=\sigma(\psi_\lambda(x))$, and we have $\sigma(x+r\n(x))=x$, $b(x+r\n (x))=r$ when $|r|$ is small enough, therefore
\begin{equation*}
\begin{aligned}
\mathbf h_\lambda(x+r\n(x)):&=\left[\w(\sigma(\psi_\lambda(x))+r\n(\sigma(\psi_\lambda(x)))\cdot\n(\sigma(\psi_\lambda(x)))\right]\;\n(x)\\ 
&\qquad+\n(x)\wedge\left[ \w(\sigma(\psi_\lambda(x))+r\n(\sigma(\psi_\lambda(x))))\wedge\n(\sigma(\psi_\lambda(x)))\right]
\end{aligned}
\end{equation*}
and 
\begin{equation*}\begin{aligned}
&\mathbf h_\lambda(x+r\n(x))\wedge\n(x+r\n(x))=\mathbf h_\lambda(x+r\n(x))\wedge\n(x)\\
&\qquad\qquad=\left\{\n(x)\wedge\left[(\w(\sigma(\phi_\lambda(x))+r\n(\sigma(\phi_\lambda(x))))\wedge\n(\sigma(\phi_\lambda(x)))\right]\right\}\wedge\n(x).
\end{aligned}
\end{equation*}
As a consequence, by taking into account that  $\sigma(\psi_\lambda(x))\in \Gamma$ and that $\frac{\partial}{\partial\n}(\w\wedge\n)=0$ on $\Gamma$ we get
\begin{equation*}\begin{aligned}
\displaystyle\frac{\partial}{\partial\n}(\mathbf h_\lambda\wedge\n)(x)&=\lim_{r\to 0^+}\frac{\mathbf h_\lambda(x+r\n(x))\wedge\n(x+r\n(x))-\mathbf h_\lambda(x)\wedge\n(x)}{r}\\
&\displaystyle=\n(x)\wedge\left(\frac{\partial (\w\wedge\n)}{\partial\n}(\sigma(\psi_\lambda(x)))
\wedge\n(x)\right)=0,
\end{aligned}
\end{equation*}
and since we have already shown that $\mathbf h_\lambda$ vanishes on $\Gamma_\lambda$, we conclude that
 $\frac{\partial}{\partial\n} (\w^*_\lambda\wedge\n)=0$ on $\Gamma_\lambda$. Again, $\eta$ is vanishing on $S_{\mu,\delta}$, hence in an open neighbor of $\Gamma_\lambda$. We deduce that $\frac{\partial}{\partial\n}(\w_\lambda\wedge\n)=0$ on $\Gamma_\lambda$.

Eventually, by taking a vanishing sequence of small enough positive numbers $(\lambda_j)_{j\in\mathbb N}$, we conclude that $\w_j:=\w_{\lambda_j}$ satisfies all the desired properties.
}
\end{proof}

\begin{lemma}\label{auxiliar}
Assume 
\eqref{OMEGA} and
\eqref{Gamma}, with $\partial\Gamma\neq\emptyset$.
Let $\mu,\delta,\lambda, \phi_\lambda$  as in the proof of {\rm Lemma \ref{lastlemma}}. There exists $\lambda_0\in(0,(\delta/2)\wedge1)$ such that, for any $\lambda<\lambda_0$, \eqref{crucial} holds.
%Then $\sigma(\psi_\lambda(y))\in\Gamma$ for every $y\in \Gamma_\lambda$, 
%where $\sigma(\cdot)$ is unique projection  on $\partial\om$. 
\end{lemma}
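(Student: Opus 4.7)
The plan is to choose $\lambda_0$ small enough (depending on $\delta$ and the $C^3$ geometry of $\partial\om$ and $\partial\Gamma$) and to split into cases according to the size of $d(y,\partial\Gamma)$. The starting point is the following observation: for every $y\in\Gamma_\lambda\setminus\Gamma$ with $\lambda<\mu_0$, the Euclidean closest point $z_0\in\Gamma$ realizing $|y-z_0|=d(y,\Gamma)$ must lie on $\partial\Gamma$, and therefore $d(y,\partial\Gamma)=d(y,\Gamma)\le\lambda$. Indeed, otherwise $z_0$ would be an interior critical point of $z\mapsto|y-z|^2$ on the smooth surface $\partial\om$, yielding $y-z_0\parallel\mathbf n(z_0)$; since $y\in\partial\om$ and $|y-z_0|<\mu_0\le R(\partial\om)$, this would force $y=z_0$, contradicting $y\notin\Gamma$.

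If $d(y,\partial\Gamma)\ge\sqrt\delta$, then $\gamma_\lambda(d^2(y,\partial\Gamma))=0$ by construction of $\gamma_\lambda$, hence $\psi_\lambda(y)=y$; choosing $\lambda_0<\sqrt\delta$, the observation above forces $y\in\Gamma$, so $\sigma(\psi_\lambda(y))=y\in\Gamma$. In the remaining regime $d(y,\partial\Gamma)<\sqrt\delta$ I would work in a local $C^3$ chart near $s_0:=s(y)\in\partial\Gamma$. With the orthonormal frame $(\mathbf t,\bb\nu_l,\mathbf n)$ at $s_0$, where $\bb\nu_l=\mathbf t\wedge\mathbf n$ points away from $\Gamma$, I parametrize $\partial\om$ by
\[
F(\xi,\eta):=s_0+\xi\,\mathbf t+\eta\,\bb\nu_l+h(\xi,\eta)\,\mathbf n,
\]
with $h\in C^3$, $h(0,0)=0$, $\nabla h(0,0)=0$; in this chart $\partial\Gamma=\{\eta=\phi(\xi)\}$ with $\phi\in C^3$, $\phi(0)=\phi'(0)=0$, and $\Gamma=\{\eta\le\phi(\xi)\}$. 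The normality condition $(y-s_0)\cdot\mathbf t=0$ gives $y=F(0,\eta_y)$ with $|\eta_y|=d(y,\partial\Gamma)(1+O(\delta))$ and $\eta_y\le0\Leftrightarrow y\in\Gamma$. Since $\psi_\lambda(y)=s_0+(\eta_y-2\gamma_\lambda)\bb\nu_l+h(0,\eta_y)\mathbf n$ differs from the point $F(0,\eta_y-2\gamma_\lambda)\in\partial\om$ only along $\mathbf n$ by an amount $|h(0,\eta_y)-h(0,\eta_y-2\gamma_\lambda)|=O(\lambda(\sqrt\delta+\lambda))$, the smoothness of the nearest-point projection $\sigma$ near $\partial\om$ yields $\sigma(\psi_\lambda(y))=F(\xi^*,\eta^*)$ with
\[
\xi^*=O(\lambda(\sqrt\delta+\lambda)),\qquad \eta^*=\eta_y-2\gamma_\lambda+O(\lambda(\sqrt\delta+\lambda)),
\]
uniformly in $s_0\in\partial\Gamma$ thanks to compactness and uniform $C^3$-bounds.

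Membership $\sigma(\psi_\lambda(y))\in\Gamma$ then reduces to $\eta^*\le\phi(\xi^*)=O((\xi^*)^2)=O(\lambda^2)$, which I would verify in two sub-cases. If $d(y,\partial\Gamma)\le\sqrt{\delta/2}$, then $\gamma_\lambda=\lambda$ and the preliminary observation combined with $y\in\Gamma\Rightarrow\eta_y\le0$ yields $\eta_y\le\lambda$, whence $\eta^*\le-\lambda+O(\lambda(\sqrt\delta+\lambda))$, negative of order $\lambda$ and dominating $\phi(\xi^*)=O(\lambda^2)$ for $\lambda_0$ sufficiently small. If $\sqrt{\delta/2}<d(y,\partial\Gamma)<\sqrt\delta$, the constraint $\lambda_0<\sqrt{\delta/2}$ forces $y\in\Gamma$, so $\eta_y<0$ and $|\eta_y|\ge c(\delta)$ for a positive constant $c(\delta)$ depending only on $\delta$ and the chart; hence $\eta^*\le-c(\delta)+O(\lambda)<\phi(\xi^*)$ for $\lambda_0$ small enough. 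This yields \eqref{crucial}.

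The main obstacle is the uniform quantitative control of the projection in the chart: one must track the dependence of the error terms on both $\lambda$ and $\delta$ uniformly as $s_0$ ranges over the compact curve $\partial\Gamma$. Once the coordinates are normalized, the estimates reduce to a routine Taylor expansion and an application of the implicit function theorem to the system $\nabla_{(\xi,\eta)}|F(\xi,\eta)-\psi_\lambda(y)|^2=0$.
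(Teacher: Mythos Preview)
Your argument is correct and follows a genuinely different route from the paper.

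The paper's proof is purely geometric and never introduces a chart for $\partial\om$. It works instead with the two-dimensional lateral surface $\partial_l S_\mu=\{\sigma+t\mathbf n(\sigma):\sigma\in\partial\Gamma,\ |t|\le\mu\}$ and its one-sided tubular neighborhoods $T^\pm_{\mu,\cdot}$. After showing the set-theoretic inclusions $T^-_{\mu,3\delta}\subset S_{2\mu}$ and $T^+_{\mu,3\delta}\cap\mathring S_{2\mu}=\emptyset$ (Step~0) and that every relevant $y$ already belongs to $T_{\mu,\delta}$ (Step~1), the core computation (Steps~2--3) writes $\psi_\lambda(y)$ as a point of the form $s_*(y)+\rho\,\bb\nu_l(s_*(y))$ plus an error controlled by $\mathrm{Lip}(\bb\nu_l)$, and checks that $\rho<0$ with $|\rho|\le 3\delta$, so that $\psi_\lambda(y)\in T^-_{\mu,3\delta}\subset S_{2\mu}$; membership in $S_{2\mu}$ immediately gives $\sigma(\psi_\lambda(y))\in\Gamma$.

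Your approach bypasses the $\partial_l S_\mu$ apparatus entirely: you parametrize $\partial\om$ locally as a graph over its tangent plane at $s(y)\in\partial\Gamma$, compute the chart coordinates $(\xi^*,\eta^*)$ of $\sigma(\psi_\lambda(y))$ by Taylor expansion and the smoothness of the nearest-point projection, and verify the defining inequality $\eta^*\le\phi(\xi^*)$ directly. This is more elementary in that it needs none of the tubular machinery of Lemma~\ref{phidelta} or the sets $T^\pm_{\mu,\delta}$; the price is that the error tracking is more delicate. In particular, in your sub-case $d(y,\partial\Gamma)\le\sqrt{\delta/2}$ the decisive estimate $\eta^*\le -\lambda+O(\lambda(\sqrt\delta+\lambda))$ needs the implicit geometric constant times $\sqrt\delta$ to be strictly less than~$1$: this is a smallness condition on $\delta$ (relative to the second fundamental form of $\partial\om$), not on $\lambda_0$. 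Since $\delta$ is a free parameter chosen at the beginning of the proof of Lemma~\ref{lastlemma}, the extra constraint is harmless, but it should be declared there rather than absorbed into ``for $\lambda_0$ sufficiently small''. Your preliminary observation that $d(y,\partial\Gamma)=d(y,\Gamma)$ for $y\in\Gamma_\lambda\setminus\Gamma$ is also used (without proof) in the paper's Step~2.
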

\begin{proof}
If $d(y,\partial\Gamma)\ge \delta$ there is nothing to prove, since in this case either $y=\phi_\lambda(y)\in\Gamma$ or $y\notin \Gamma_\lambda$ (because $\lambda<\delta/2$).
 Therefore, we assume from now on that $d(y,\partial\Gamma)<\delta$ and we prove the result in four steps.\\
 
\noindent\textbf{Step 0}. We start by showing the elementary properties  $T^{-}_{\mu,3\delta}\subset S_{2\mu}$ and $T^+_{\mu,3\delta}\cap \mathring S_{2\mu}=\emptyset$.  Indeed, concerning the first property, we may prove that for any $s\in\partial_lS_\mu$, a point of the form $s-\alpha\bb\nu_l(s)$, with $0\le\alpha\le 3\delta$, belongs to $S_{2\mu}$. This is obvious if   $\alpha$ is small, since $\bb\nu_l$ is normal to $\partial S_{2\mu}$. Moreover, as $\alpha$ increases without reaching the threshold $\delta_0$, the closest point of $\partial_lS_{2\mu}$ is always $s$, by Lemma \ref{phidelta} which gives the unique projection property on $\partial_lS_{2\mu}$. This shows that no other point of $\partial_lS_{2\mu}$ can be a reached. And since $\alpha\le3\delta$, we are also far from $\partial S_{2\mu}\setminus\partial_lS_{2\mu}\subset\{x\in\mathbb R^3: d(x,\partial\om)=2\mu\}$, because $d(s-\alpha\bb\nu_l(s),\partial\om)\le d(s,\partial\om)+3\delta=\mu+3\delta<2\mu$. The second property is proved in the same way. \\

\noindent \textbf{Step 1}. We  check that 
  $y\in T_{\mu,\delta}$. %and that $\bb\nu_l(s(y))=\bb\nu_l(s_*(y))$, where $s_*(y)$ is the projection of $$
  Indeed, we have $d(y,\partial_lS_\mu)\le d(y,\partial\Gamma)=d(y,s(y))<\delta$. We take a point $s_*\in\partial_lS_\mu$ such that $|y-s_*|=d(y, \partial_lS_\mu)$, therefore $d(s_*,s(y))\le d(y,s(y))+d(y,s_*(y))<2\delta<\mu$ so that $s_*$ is not on the boundary of $\partial_lS_\mu$.  Since  $s_*$ is a minimizer of the distance function from the $C^2$ manifold $\partial_lS_\mu$, the corresponding first order minimality conditions immediately imply that $y-s_*$ is orthogonal to the tangent plane to $\partial_lS_\mu$ at $s_*$, so that  %coincides with the unique projection  $s_*(y)$ of $y$ on $\partial_lS_\mu$, so that 
  \begin{equation*}\label{eins}
  y=s_*+\tau_*(y)\bb\nu_l(s_*), 
  \end{equation*}
  where $|\tau_*(y)|=d(y,\partial_lS_\mu)<\delta$. Thus, $y\in T_{\mu,\delta}$.
  We notice that by Lemma \ref{phidelta}, $s_*$  coincides in fact with  the unique  projection  $s_*(y)$ of $y$ on $\partial_lS_\mu$.
  \\

   \noindent \textbf{Step 2}.
We prove the result in the case $y\in \Gamma_\lambda\setminus\Gamma$. 
In this case we have $d(y,\partial\Gamma)=d(y,s(y))\le\lambda$ and as a consequence $d(s(y),s_*(y))\le 2\lambda$. We also have $\gamma_\lambda(d^2(y,\partial\Gamma))=\lambda$ because $\lambda<(\delta/2)\wedge 1$,  and moreover using Step 1 and $y\notin\Gamma$ we have $y\in T^+_{\mu,\delta}$, so that $\tau_*(y)>0$. Actually, $y\in T^+_{2\lambda,\delta}$ as well, since $d(s(y),s_*(y))\le 2\lambda$.

By taking into account that $(\mathbf t\wedge \mathbf n)(s(y))=\bb\nu_l(s(y))$ we get
\begin{equation}\label{zwei}\begin{aligned}
\psi_\lambda(y)&=y-2\gamma_\lambda(d^2(y,\partial\Gamma))(\mathbf t\wedge \mathbf n)(s(y))=y-2\lambda\bb\nu_l (s(y))\\&=s_*(y)+\tau_*(y)\bb\nu_l(s_*(y))-2\lambda\bb\nu_l (s(y))\\
&=s_*(y)+(\tau_*(y)-2\lambda)\bb\nu_l (s_*(y))+2\lambda (\bb\nu_l (s_*(y))-\bb\nu_l (s(y))).
\end{aligned}
\end{equation}
We notice that the point $s_*(y)+(\tau_*(y)-2\lambda)\bb\nu_l (s_*(y))$ belongs to $T^{-}_{2\lambda,\delta}$, because $\tau_*(y)=d(y,\partial_lS_\mu)\le d(y,s(y))\le \lambda$ and therefore $-\delta<-2\lambda\le \tau_*(y)-2\lambda\le-\lambda$, and we have in particular
\begin{equation}\label{drei}
d(s_*(y)+(\tau_*(y)-2\lambda)\bb\nu_l (s_*(y)), \partial_lS_\mu)\ge \lambda.\end{equation}
%Since $\tau_*(y)-2\lambda < -\lambda$ then $s_*(y)+(\tau_*(y)-2\lambda)\bb\nu_l (s_*(y))\in\ \hbox{int}\ (S_{2\mu_0})$ and 
%$d(s_*(y)+(\tau_*(y)-2\lambda)\bb\nu_l (s_*(y)),\partial_l S_{2\mu_0}) > \lambda/2$
But
\begin{equation}\label{vier}
|2\lambda (\bb\nu_l (s_*(y))-\bb\nu_l (s(y)))|\le 2\lambda \,\mathrm{Lip}(\bb\nu_l)\,|s_*(y)-s(y)|\le 4\lambda^2 \,\mathrm{Lip}(\bb\nu_l).
\end{equation}
%$ \mathrm{Lip}(\bb\nu_l):=\sup\{|\bb\nu_l(z)-\bb\nu_l(z')|:\;z\in\partial_lS_\mu,\; z'\in\partial_lS_\mu\}<+\infty$, since $\bb\nu_l\in C^1(\partial_lS_\mu)$.
By \eqref{zwei}, \eqref{drei} and \eqref{vier}, there exists a small enough $\lambda_0$  depending on  $ \mathrm{Lip}(\bb\nu_l)$ such that for any $\lambda<\lambda_0$ we have $\psi_\lambda(y)\in T^-_{\mu,\delta}$. Since $T^-_{\mu,\delta}\subset S_{2\mu}$ by Step 0, the result is proved.\\

%\noindent  \textbf{Step 2}. We prove that $\phi_\lambda(y)\in T_{\mu,2\delta}$ and that $s_*(\phi_\lambda(y))=s_*(y)$.
%
%  Indeed, $\phi_\lambda(y)$ is obtained by $y$ by moving along the projection ray of $y$ on $\partial_lS_\mu$, at distance at most $2\lambda$. Since $\delta+2\lambda<2\delta$, we obtain $\phi_\lambda(y)\in T_{2\mu,2\delta}$ as well, so that Lemma \ref{phidelta} again shows that $\phi_\lambda(y)$ has a unique projection on $\partial_lS_\mu$, coinciding with $s_*(y)$, i.e. $s_*(\phi_\lambda(y))=s_*(y)$.
%  In particular, there exists a unique number $\tau_*(\phi_\lambda(x))$ in the interval $[-2\delta,2\delta]$ such that 
%  \begin{equation*}
%  \phi_\lambda(y)=s_*(\phi_\lambda(y))+\tau_*(\phi_\lambda(y))\bb\nu_l(s_*(\phi_\lambda(y))).\\
%  \end{equation*}

 \noindent \textbf{Step 3}. 
We prove the result in case $y\in\Gamma$. Since $y\in T_{\mu,\delta}$ by Step 1, we have in this case
$y\in T^-_{\mu,\delta}$, therefore we have $\tau_*(y)=-d(y, \partial_lS_\mu)$. By  $d(s_*(y), s(y))<2\delta$, we have in particular, $y\in T^-_{2\delta,\delta}$.
With the same computation of Step 2, we obtain an expression which is analogous to \eqref{zwei}, that is,
\begin{equation}\label{funf}
\psi_\lambda(y)=s_*(y)-(d(y,\partial_lS_\mu)+2\gamma_\lambda(d^2(y,\partial\Gamma)))\bb\nu_l (s_*(y))+2\gamma_\lambda(d^2(y,\partial\Gamma)) (\bb\nu_l (s_*(y))-\bb\nu_l (s(y))).
\end{equation}
In particular, since $0\le d(y,\partial_lS_\mu)+2\gamma_\lambda(d^2(y,\partial\Gamma))\le \delta+2\lambda<2\delta$, we have \begin{equation}\label{sechs}s_*(y)-(d(y,\partial_lS_\mu)+2\gamma_\lambda(d^2(y,\partial\Gamma)))\bb\nu_l (s_*(y))\in T^{-}_{2\delta,\delta+2\lambda},\end{equation} with
\begin{equation}\label{sieben}
d(s_*(y)-(d(y,\partial_lS_\mu)+2\gamma_\lambda(d^2(y,\partial\Gamma)))\bb\nu_l (s_*(y)),\partial_lS_\mu)\ge 2\gamma_\lambda(d^2(y,\partial\Gamma)).
\end{equation}
 By the assumptions on $\delta$, we have
 \begin{equation}\label{acht}
 |\bb\nu_l (s_*(y))-\bb\nu_l (s(y))|\le \mathrm{Lip}(\bb\nu_l)|s_*(y)-s(y)|\le 2\delta\,\mathrm{Lip}(\bb\nu_l)\le \frac12.
 \end{equation}
By \eqref{funf}, \eqref{sechs}, \eqref{sieben} and \eqref{acht}, we conclude that $\psi_\lambda(y)\in T^-_{2\delta+\lambda,\delta+3\lambda}$. Therefore, $y\in T^{-}_{\mu,3\delta}$, since $2\delta+\lambda<\mu$ and $\delta+3\lambda<3\delta$.  By Step 0,   the result  is proved.
 \end{proof}

Thanks to the results of Section \ref{Sectionpotential} and to Lemma \ref{lastlemma}, we  deduce the final  approximation result for curl vector fields.

\begin{lemma} \lab{GlvsG}
 Assume 
\eqref{OMEGA} and
\eqref{Gamma}, with $\partial\Gamma\neq\emptyset$.
Let $ \v\in H^{1}(\om,\mathbb R^{3})$ such that $\mathrm{div}\, \v=0$ a.e. in $\om$ and $\v=0$ on $\Gamma$. Then there exist a vanishing sequence  $ (\lambda_j)_{j\in\mathbb N}\subset(0,\mu_0)$ and a sequence $(\v_j)_{j\in\mathbb N}\subset H^{1}(\om,\mathbb R^{3})$ such that  $\mathrm{div}\, \v_j=0$ a.e. in $\om$, $\v_j=0$ on $\Gamma_{\lambda_j}$ and such that $\v_j\to\v$ in $H^{1}(\om,\mathbb R^{3})$ as $j\to+\infty$.
\end{lemma}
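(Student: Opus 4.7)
The plan is to produce $\v_j$ as the curl of a suitably approximated vector potential, using the machinery already developed in Section \ref{Sectionpotential} and Lemma \ref{lastlemma}.

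\textbf{Step 1: Construct a good potential for $\v$.} First I will invoke Lemma \ref{w=0} to obtain $\widetilde\w\in H^2(\om,\mathbb R^3)$ with $\widetilde\w=0$ on $\Gamma$ and $\curl\widetilde\w=\v$ a.e.\ in $\om$. Since by hypothesis $\v=0$ on $\Gamma$, this means $\curl\widetilde\w=0$ on $\Gamma$. Because $\widetilde\w$ vanishes on $\Gamma$, I can apply Lemma \ref{dernormwtang} to $\widetilde\w$ to conclude that $\frac{\partial}{\partial\n}(\widetilde\w\wedge\n)=0$ on $\Gamma$ as well.

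\textbf{Step 2: Approximate the potential on enlarged boundary portions.} Having secured the two boundary conditions $\widetilde\w=0$ and $\frac{\partial}{\partial\n}(\widetilde\w\wedge\n)=0$ on $\Gamma$, I am in exactly the hypotheses of Lemma \ref{lastlemma}. This yields a vanishing sequence $(\lambda_j)_{j\in\mathbb N}\subset(0,\mu_0)$ and a sequence $(\w_j)_{j\in\mathbb N}\subset H^2(\om,\mathbb R^3)$ such that
\begin{equation*}
\w_j=0\ \hbox{ and }\ \tfrac{\partial}{\partial\n}(\w_j\wedge\n)=0\ \hbox{ on }\Gamma_{\lambda_j},\qquad \w_j\to\widetilde\w\ \hbox{ in }H^2(\om,\mathbb R^3).
\end{equation*}

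\textbf{Step 3: Define $\v_j$ and verify its properties.} I set $\v_j:=\curl\w_j$. Then $\mathrm{div}\,\v_j=0$ a.e.\ in $\om$ automatically, and since the curl is continuous from $H^2$ to $H^1$, the $H^2$-convergence $\w_j\to\widetilde\w$ gives $\v_j=\curl\w_j\to\curl\widetilde\w=\v$ in $H^1(\om,\mathbb R^3)$. The boundary condition $\v_j=0$ on $\Gamma_{\lambda_j}$ is the key payoff of Step 2: as noted in the paper between \eqref{defgamma} and \eqref{om''}, $\Gamma_{\lambda_j}$ satisfies assumption \eqref{Gamma} thanks to the regularity of $\partial\Gamma$, so Lemma \ref{dernormwtang} applies to $\w_j$ on $\Gamma_{\lambda_j}$. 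Because both $\w_j=0$ and $\tfrac{\partial}{\partial\n}(\w_j\wedge\n)=0$ on $\Gamma_{\lambda_j}$, this lemma gives $\curl\w_j=0$ on $\Gamma_{\lambda_j}$, i.e.\ $\v_j=0$ on $\Gamma_{\lambda_j}$, as desired.

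\textbf{Main obstacle.} There is essentially no new obstacle here, since the delicate work has already been carried out: Lemma \ref{w=0} to get a potential vanishing on $\Gamma$, Lemma \ref{dernormwtang} to convert between $\curl\w=0$ and $\partial_\n(\w\wedge\n)=0$ on the Dirichlet set, and Lemma \ref{lastlemma} to push the two boundary conditions from $\Gamma$ onto a slightly enlarged $\Gamma_{\lambda_j}$ while keeping $H^2$ convergence. The only point one must be careful about is the regularity of $\Gamma_{\lambda_j}$ needed to invoke Lemma \ref{dernormwtang}, which is handled by the remark following \eqref{defgamma} that $\Gamma_\delta$ itself satisfies \eqref{Gamma} whenever $\partial\Gamma$ is $C^3$.
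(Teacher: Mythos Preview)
Your proof is correct and follows essentially the same route as the paper: construct a vanishing potential $\widetilde\w$ via Lemma \ref{w=0}, use Lemma \ref{dernormwtang} to deduce $\frac{\partial}{\partial\n}(\widetilde\w\wedge\n)=0$ on $\Gamma$, apply Lemma \ref{lastlemma} to push both boundary conditions to $\Gamma_{\lambda_j}$, and define $\v_j=\curl\w_j$. The additional care you take in noting that $\Gamma_{\lambda_j}$ satisfies \eqref{Gamma} (so that Lemma \ref{dernormwtang} applies on the enlarged set) is appropriate and matches what the paper implicitly uses.
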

\begin{proof} %Let $ \v\in H^{1}(\om;\mathbb R^{3})$ such that $\mathrm{div}\, \v=0$ a.e. in $\om$ and $\v=0$ on $\Gamma$. 
By Lemma \ref{w=0} there exists $\widetilde\w\in H^{2}(\Omega,\R^3)$ such that $\widetilde\w=0$ on $\Gamma$ and $\hbox{curl}\, \widetilde\w=\v$ a.e. in $\om$, so that Lemma \ref{dernormwtang} implies $\frac{\partial}{\partial \n}(\widetilde\w\wedge\n)=0$ on $\Gamma$.  Hence, by Lemma \ref{lastlemma} there exist a vanishing sequence $(\lambda_j)_{j\in\mathbb N}\subset(0,+\infty)$ and a sequence $(\w_j)_{j\in\mathbb N}\subset H^2(\om,\mathbb R^3)$ such that $\w_j=\frac{\partial}{\partial\n} (\w_j\wedge\n)=0$ on $\Gamma_{\lambda_j}$ and $\w_j\to \widetilde\w$ in $H^2(\om,\mathbb R^3)$. By Lemma \ref{dernormwtang} we get $\curl\w_j=0$ on $\Gamma_{\lambda_j}$, hence by setting $\v_j:=\curl\w_j$ the result follows.
\end{proof}

%{\begin{lemma}\lab{gammalbis}
% Assume 
%\eqref{OMEGA} and
%\eqref{Gamma}, with $\partial\Gamma=\emptyset$.
%% Let  $0<\delta<\mu_0$, where $\mu_0$ is defined by \eqref{mu0}.
%% Let $0<\lambda_0 < R(\partial\Gamma)$ and
% Let $ \v\in H^{1}(\om;\mathbb R^{3})$ such that $div\, \v=0$ a.e. in $\om$ and $\v=0$ on $\Gamma$. Then there exists
% %an open set $\om '\subset\mathbb R^3$ such that $\overline\om\subset \om ' $ 
%  a sequence
% $(\v_{j})_{j\in\mathbb N}\subset C^1(\overline{\om '},\R^3)$ such that $\mathrm{div}\, \v_{j}=0$ a.e. in $\om '$, $\v_{j}=0$ on $\Gamma$ and $\v_{j}\to \v$ in   $H^{1}(\om;\mathbb R^{3})$.
% \end{lemma}%%%%%Q_%%%%%%%%%%
% 
% 
%\begin{proof} Let $0<\mu<\mu_0$, where $\mu_0$ is defined by \eqref{mu0}.
%We may follow the very same arguments of the proof of Lemma \ref{gammal}.
%Since $\Gamma_{\delta}\equiv \Gamma$  it will be enough to define $S_\eps$ as in \eqref{SpartS} for every $0 < \eps < \mu/2$ and a $C^{2}(\mathbb R^3)$  cut-off function $\eta_\eps$ such that $0\le \eta_{\eps}\le 1,\ \eta_{\eps}=0\ \hbox{on}\ S_{\eps}$ and  $\eta_{\eps}=1\ \hbox{on}\ \mathbb R^3\setminus  S_{2\eps}$. We may define for instance  $\eta_\eps(x)$ by  $\zeta((d(x,\partial\om)-\eps)/\eps)$ if $x\in S_{2\eps}\setminus S_\eps$, where $\zeta$ is given by \eqref{poly}.
%Then, by taking $\widetilde\w$ as above, the result follows by defining $\w_{\eps}:=\eta_{\eps}\widetilde\w$ and by arguing as before, the argument  being in fact a reduced version of the previous one  since only the parameter $\eps$ appears.
%\end{proof}
%}

 \section{Proof of the main result}

Let us start by recalling the following version of the rigidity inequality by Friesecke, James and M\"uller \cite{FJM0}.
\begin{lemma} {\rm ({\bf Geometric Rigidity Inequality} \cite{FJM}, \cite{ADMDS}).} Let $g_{p}$ the function defined in \eqref{gp}. There exists a constant $C_p=C_{p}(\om) >0$ such that for every $\yy\in W^{1,p}(\om,\mathbb R^{3})$
there exists a constant $\mathbf R\in SO(3)$ such that we have
\begin{equation}\label{muller}
\int_{\om}g_{p}(|\nabla\yy-\mathbf R|)\,dx\le C_{p}\int_{\om}g_{p}(d(\nabla \yy, SO(3)))\,dx.
\end{equation}
\end{lemma}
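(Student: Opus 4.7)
The plan is to reduce \eqref{muller} to the classical $L^2$-version of the Friesecke--James--M\"uller rigidity inequality \cite{FJM0} via a Lipschitz truncation at the unit threshold. The rationale is that $g_p(t)$ behaves like $t^2$ for $t\in[0,1]$ and like $t^p$ for $t\ge 1$, so the desired inequality prescribes quadratic rigidity on the set where $d(\nabla\mathbf{y},SO(3))\le 1$ and the cheaper $p$-growth rigidity on its complement: the former regime is the delicate one, addressed by the $L^2$-FJM argument, while the latter is easily absorbed because $g_p(d)\ge 1$ when $d\ge 1$.

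First I would set $E:=\{x\in\Omega:d(\nabla\mathbf{y}(x),SO(3))>1\}$ and invoke the Whitney-type truncation developed in \cite{FJM} (cf.\ also \cite{ADMDS}) to produce $\tilde{\mathbf{y}}\in W^{1,\infty}(\Omega,\mathbb{R}^3)$ and a measurable set $F\supset E$ with $\tilde{\mathbf{y}}=\mathbf{y}$ a.e.\ on $\Omega\setminus F$, $d(\nabla\tilde{\mathbf{y}},SO(3))\le M$ a.e.\ in $\Omega$ for some universal $M>0$, and
$$|F|\;\le\;C\int_E d^p(\nabla\mathbf{y},SO(3))\,dx\;\le\;C\int_\Omega g_p(d(\nabla\mathbf{y},SO(3)))\,dx,$$
where I used that $d^p\le g_p(d)$ on $\{d>1\}$. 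Then I would apply the $L^2$-rigidity of \cite{FJM0} to $\tilde{\mathbf{y}}$ to find $\mathbf{R}\in SO(3)$ with
$$\int_\Omega|\nabla\tilde{\mathbf{y}}-\mathbf{R}|^2\,dx\;\le\;C\int_{\Omega\setminus F}d^2(\nabla\mathbf{y},SO(3))\,dx+CM^2|F|\;\le\;C\int_\Omega g_p(d(\nabla\mathbf{y},SO(3)))\,dx,$$
the last inequality following from $g_p(d)=d^2$ on $\Omega\setminus F\subset\{d\le 1\}$ together with the bound on $|F|$.

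To conclude I would split the left-hand side of \eqref{muller}. On $\Omega\setminus F$, $\nabla\mathbf{y}=\nabla\tilde{\mathbf{y}}$, and the elementary bound $g_p(t)\le (2/p)t^2$ (valid for all $t\ge 0$ and $p\in(1,2]$, since $t^p\le t^2$ when $t\ge 1$ while $g_p(t)=t^2$ when $t\le 1$) combined with the $L^2$ estimate above gives $\int_{\Omega\setminus F}g_p(|\nabla\mathbf{y}-\mathbf{R}|)\,dx\le C\int_\Omega g_p(d(\nabla\mathbf{y},SO(3)))\,dx$. On $F$, since $\mathbf{R}\in SO(3)$ forces $|\mathbf{R}|=\sqrt{3}$ and $|\nabla\mathbf{y}|\le d(\nabla\mathbf{y},SO(3))+\sqrt{3}$, I have the pointwise bound $|\nabla\mathbf{y}-\mathbf{R}|\le d(\nabla\mathbf{y},SO(3))+2\sqrt{3}$, whence $g_p(|\nabla\mathbf{y}-\mathbf{R}|)\le Cg_p(d(\nabla\mathbf{y},SO(3)))+C$ by the doubling behaviour of $g_p$; integrating over $F$ and using $|F|\le C\int_\Omega g_p(d(\nabla\mathbf{y},SO(3)))\,dx$ closes the argument.

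The hard part will be the Lipschitz truncation with the precise properties stated above: one must extend $\mathbf{y}$ across the bad set $E$ in such a way that not just $|\nabla\tilde{\mathbf{y}}|$ but even $d(\nabla\tilde{\mathbf{y}},SO(3))$ is uniformly bounded by an absolute constant. This requires combining a Whitney decomposition with pointwise estimates for a suitable maximal function of $d(\nabla\mathbf{y},SO(3))$, and is the technical core of the result, worked out carefully in \cite{FJM,ADMDS}; once the truncation is in hand, the interpolation estimates above are straightforward.
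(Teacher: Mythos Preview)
The paper does not give its own proof of this lemma: it merely recalls the statement from the literature, with citations to \cite{FJM} and \cite{ADMDS}. Your sketch outlines essentially the argument used in \cite{ADMDS} to derive the $g_p$-version of rigidity from the classical $L^2$-result of \cite{FJM0}, namely Lipschitz truncation at the unit threshold followed by the $L^2$-rigidity estimate on the truncated map and a separate, elementary bound on the bad set. That is the correct strategy and matches the cited references; since the paper itself offers no proof to compare against, your proposal is an appropriate way to fill in what the authors simply quote.
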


Based on the above result, we deduce compactness of minimizing sequences, which follows in fact from the results in \cite{ADMDS}.

\begin{lemma}%([$\mbox{\cite[Lemma 3.3]{ADMDS}}$]
\label{add} Assume \eqref{OMEGA}, \eqref{Gamma}, \eqref{framind}, \eqref{Z1}, \eqref{reg},    
\eqref{coerc}. 
%Under the same assumptions of {\rm Theorem \ref{mainth1}}, 
Let $(h_j)_{j\in\mathbb N}$ be a sequence of positive real numbers  and let $(\v_j)\subset W^{1,p}(\om,\mathbb R^3)$ be a sequence such that  $\v_j=0$ on $\Gamma$ for any $j\in\mathbb N$. For every $j\in\mathbb N$,  let $\mathbf y_j=\bb{i}+h_j\v_j$  and let $\mathbf R_j\in SO(3)$ be a constant rotation satisfying  \eqref{muller}. Then there exists a constant $C>0$ (only depending on $p$, $\Omega$ and $\Gamma$) such that for any $j\in\mathbb N$ there hold
\begin{equation}\label{borrow}
|\mathbf I-\mathbf R_j|^2\le  C\,\int_\Omega \mathcal W^I(x, \mathbf I+h_j \v_j)\,dx
\end{equation}
and
\begin{equation}\label{bsec}
\int_\om |\nabla\v_j|^p\,dx\le  C\left(1+\int_\om\mathcal W^I(x, \mathbf I+h_j\nabla\v_j)\,dx\right).
\end{equation}
If we assume in addition that $h_j\to 0$ as $j\to+\infty$ and that
\begin{equation}\label{minus}
\lim_{j\to+\infty}\left(\mathcal G^I_{h_j}(\v_j)-\inf_{W^{1,p}(\om,\mathbb R^3)}\mathcal G^I_{h_j}\right)=0,
\end{equation}
then $\sup_{j\in\mathbb N}\|\v_j\|_{W^{1,p}(\om,\mathbb R^3)}<+\infty$.
\end{lemma}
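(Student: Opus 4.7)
My plan is to combine the geometric rigidity inequality \eqref{muller} with the structural information that $\v_j = 0$ on $\Gamma$. I apply \eqref{muller} to $\mathbf y_j := \bb{i} + h_j \v_j$ to obtain a constant rotation $\mathbf R_j \in SO(3)$ satisfying
\[
\int_\om g_p\bigl(|\mathbf I + h_j \nabla \v_j - \mathbf R_j|\bigr)\,dx \;\le\; C_p \int_\om g_p\bigl(d(\nabla \mathbf y_j, SO(3))\bigr)\,dx \;\le\; C\int_\om \mathcal W^I(x, \mathbf I + h_j\nabla\v_j)\,dx,
\]
the last inequality following from the coercivity assumption \eqref{coerc}. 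I then set $\mathbf z_j(x) := \mathbf y_j(x) - \mathbf R_j\, x$, so that $\nabla \mathbf z_j = \mathbf I + h_j\nabla\v_j - \mathbf R_j$ and, crucially, $\mathbf z_j(x) = (\mathbf I - \mathbf R_j) x$ on $\Gamma$ because $\mathbf y_j = x$ there.

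For \eqref{borrow} the plan is to transfer the $g_p$-bound on $\nabla \mathbf z_j$ into a trace bound on $\Gamma$. Since $g_p(t) \ge t^2$ on $[0,1]$ and $|\mathbf I - \mathbf R_j|$ is bounded a priori by $2\sqrt 3$ on $SO(3)$, a truncation splitting $\{|\nabla\mathbf z_j|\le 1\}$ from $\{|\nabla\mathbf z_j|>1\}$ produces $\|\nabla\mathbf z_j\|_{L^2(\om)}^2 \le C\bigl(\int_\om\mathcal W^I\,dx + 1\bigr)$. Subtracting from $\mathbf z_j$ its $\Gamma$-mean $(\mathbf I-\mathbf R_j)\bar x_\Gamma$, where $\bar x_\Gamma := \frac{1}{\mathcal H^2(\Gamma)}\int_\Gamma x \,d\mathcal H^2$, and applying the Poincar\'e--trace inequality yield
\[
\int_\Gamma |(\mathbf I - \mathbf R_j)(x - \bar x_\Gamma)|^2\,d\mathcal H^2 \;\le\; C\,\|\nabla\mathbf z_j\|_{L^2(\om)}^2.
\]
The integral on the left is a quadratic form in $\mathbf I - \mathbf R_j$ whose restriction to the tangent space of $SO(3)$ at $\mathbf I$ (skew matrices $\mathbf W$ acting as $\mathbf W v = \bb\omega\times v$) is coercive, because $\mathcal H^2(\Gamma)>0$ forbids the family $\{x-\bar x_\Gamma\}_{x\in\Gamma}$ from being everywhere parallel to any given direction; this gives $c_\Gamma|\mathbf I - \mathbf R_j|^2\le$ left-hand side for $\mathbf R_j$ in a neighborhood of $\mathbf I$, while the complementary case is handled by using that $|\mathbf I-\mathbf R_j|$ is uniformly bounded whereas the right-hand side can be forced small only if $\mathbf R_j\to\mathbf I$. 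Combining these produces \eqref{borrow}.

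For \eqref{bsec} I use the decomposition $h_j\nabla\v_j = \nabla\mathbf z_j + (\mathbf R_j - \mathbf I)$, together with the doubling property $g_p(2t)\le C_p g_p(t)$ and convexity of $g_p$, to deduce $\int_\om g_p(|h_j\nabla\v_j|)\,dx \le C\bigl(\int_\om g_p(|\nabla\mathbf z_j|)\,dx + g_p(|\mathbf I-\mathbf R_j|)|\om|\bigr)$; invoking the rigidity bound and \eqref{borrow} reduces this to $C(\int_\om\mathcal W^I\,dx + |\om|)$. The lower bound $g_p(t)\ge (2/p)t^p - C_p$ for $t\ge 1$, combined with the pointwise bound $t^p \le g_p(t)+1$ for $t\le 1$, then produces the stated $L^p$-estimate.

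Finally, for the uniform bound on $\|\v_j\|_{W^{1,p}}$, I test $\mathcal G^I_{h_j}$ on the admissible $\v\equiv \mathbf 0$: this satisfies $\v=0$ on $\Gamma$ and $\det(\mathbf I+h_j\nabla\mathbf 0)=1$, so $\mathcal G^I_{h_j}(\mathbf 0)=0$ and hence $\inf\mathcal G^I_{h_j}\le 0$. Hypothesis \eqref{minus} then yields $\mathcal G^I_{h_j}(\v_j)\le 1$ for all large $j$, whence $h_j^{-2}\int_\om\mathcal W^I(x,\mathbf I+h_j\nabla\v_j)\,dx \le 1 + |\mathcal L(\v_j)|$. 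Since $\v_j$ vanishes on a set of positive $\mathcal H^2$-measure, Poincar\'e gives $\|\v_j\|_{W^{1,p}(\om)}\le C\|\nabla\v_j\|_{L^p(\om)}$ and therefore $|\mathcal L(\v_j)|\le C_{\mathcal L}\|\nabla\v_j\|_{L^p(\om)}$; inserting the resulting bound on $\int\mathcal W^I$ into \eqref{bsec} and absorbing $\|\nabla\v_j\|_{L^p(\om)}$ via Young's inequality delivers $\sup_{j}\|\v_j\|_{W^{1,p}(\om)}<+\infty$. The main technical hurdle is in \eqref{borrow}: securing the coercivity constant $c_\Gamma$ of the trace quadratic form \emph{uniformly} over $\mathbf R_j\in SO(3)$, which away from $\mathbf I$ requires exploiting the fact that no nontrivial rigid rotation can fix $\Gamma$ pointwise.
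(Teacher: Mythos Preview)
Your arguments for \eqref{bsec} and for the final $W^{1,p}$-bound are essentially correct and match the paper's (the paper absorbs the linear $\|\v_j\|$-term by sending $h_j\to 0$ rather than by Young's inequality, but either device works).

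There is, however, a genuine gap in your proof of \eqref{borrow} when $p<2$. You assert that the truncation splitting yields $\|\nabla\mathbf z_j\|_{L^2(\om)}^2 \le C\bigl(\int_\om\mathcal W^I\,dx + 1\bigr)$, but on the set $\{|\nabla\mathbf z_j|>1\}$ the function $g_p$ only controls $|\nabla\mathbf z_j|^p$, not $|\nabla\mathbf z_j|^2$; no additive constant rescues this, and the a priori bound on $|\mathbf I-\mathbf R_j|$ is irrelevant to bounding $\nabla\mathbf z_j$. The $L^2$ Poincar\'e--trace step is therefore unavailable. Falling back to the $L^p$ trace would only give $|\mathbf I-\mathbf R_j|^p \le C\bigl(|\om| + \int_\om\mathcal W^I\,dx\bigr)$, and the additive $|\om|$ cannot be removed by your ``complementary case'' reasoning: precisely when $\int\mathcal W^I$ is small you need the estimate \emph{without} additive constant, not merely the qualitative fact that $\mathbf R_j$ is close to $\mathbf I$.

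The paper bypasses this entirely by citing \cite[Lemma~3.3]{ADMDS}. A self-contained fix is to split into the case $\int_\om g_p(|\nabla\mathbf z_j|)\,dx>|\om|$ (trivial, since then $|\mathbf I-\mathbf R_j|^2\le 12\le C\int_\om\mathcal W^I\,dx$) and the case $\le|\om|$; in the latter, Jensen for the convex $g_p$ (with $g_p(0)=0$ and $g_p^{-1}(s)=\sqrt{s}$ on $[0,1]$) gives $\|\nabla\mathbf z_j\|_{L^1(\om)}\le|\om|^{1/2}\bigl(\int_\om g_p(|\nabla\mathbf z_j|)\,dx\bigr)^{1/2}$. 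The $W^{1,1}$ Poincar\'e--trace inequality then yields $\int_\Gamma|(\mathbf I-\mathbf R_j)(x-\bar x_\Gamma)|\,d\mathcal H^2\le C\|\nabla\mathbf z_j\|_{L^1(\om)}$, and the coercivity of $\mathbf R\mapsto\int_\Gamma|(\mathbf I-\mathbf R)(x-\bar x_\Gamma)|\,d\mathcal H^2$ on $SO(3)$ (which, as you observe, follows from $\mathcal H^2(\Gamma)>0$) delivers $|\mathbf I-\mathbf R_j|^2\le C\int_\om g_p(|\nabla\mathbf z_j|)\,dx\le C\int_\om\mathcal W^I\,dx$.
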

\begin{proof}
We have $\mathcal W^I\ge \mathcal W$, and
\eqref{borrow} holds true with $\mathcal W$ in place of $\mathcal W^I$ as proven in \cite[Lemma 3.3]{ADMDS}  (by taking advantage of  assumption \eqref{coerc} on $\mathcal W$). Therefore, \eqref{borrow} holds.

%Moreover, let $(h_j)_{j\in\mathbb N}\subset(0,+\infty)$ and let $(\v_j)\subset W^{1,p}(\Omega,\mathbb R^3)$ be a sequence such that or any $j\in\mathbb N$ there holds $\v_j=0$ on $\Gamma$. For any $j\in\mathbb N$ let $\mathbf y_j:=\mathrm{id}+h_j\v_j$ and let $\mathbb R_j\in SO(3)$ be a constant rotation such that \eqref{muller} holds.

 Using the form of $g_p$ 
it is clear that there exists a constant $c$ (only depending on $p$) such that
\[
\int_\om g_p(h_j|\nabla\v_j|)\,dx\le c\int_\om\left(g_p(|\mathbf I+h_j\nabla\v_j-\mathbf R_j|)+|\mathbf I-\mathbf R_j|^2\right)\,dx.
\]
Hence, by invoking the  rigidity estimate \eqref{muller}, there is another constant $K$ (only depending on $p$ and $\om$) such that
\[
\int_\om g_p(h_j|\nabla\v_j|)\,dx\le K\left(\int_\om g_p(d(\mathbf I+h_j\nabla \v_j, SO(3)))\,dx+ |\mathbf I-\mathbf R_j|^2 \right),
\]
and 
since  $x^p\le1+2g_p(x)$ holds for $x\ge 0$,  by making use of \eqref{coerc} and \eqref{borrow} it follows that there is a further constant $C$ (only depending on $\Omega$, $\Gamma$, p) such that 
\[
\int_\om |\nabla\v_j|^p\,dx\le \int_\om (1+2g_p(h_j|\nabla\v_j|))\,dx\le C\left(1+\int_\om\mathcal W(x, \mathbf I+h_j\nabla\v_j)\,dx\right).
\]
Since $\mathcal W\le \mathcal W^I$, 	\eqref{bsec} follows.

Let us prove the last statement.
Assuming \eqref{minus} and assuming wlog that $\|\v_j\|_{W^{1,p}(\om,\mathbb R^3)}\ge 1$ for any $j\in\mathbb N$, we get for any large enough $j$
\[
\frac1{h_j^2}\int_\om \mathcal W^I(x,\mathbf I+h_j\nabla\v_j)\,dx-\mathcal L(\v_j)=\mathcal G^I_{h_j}(\v_j)\le \mathcal G^I_{h_j}(\mathbf 0)+1=1,
\]
thus \eqref{bsec} implies
\begin{equation*}\
\int_\om |\nabla\v_j|^p\,dx\le  C\left(1+\int_\om\mathcal W^I(x, \mathbf I+h_j\nabla\v_j)\,dx\right)
\le C+C(h_j^2+C_{\mathcal L} 	\,h_j^2 \,\|\v_j\|^p_{W^{1,p}(\om,\mathbb R^3)}).
\end{equation*}
Since $h_j$ goes to zero, the result follows by Friedrichs inequality.
\end{proof}

We next prove $\Gamma$-convergence. The limsup inequality is based on the approximation results from Section \ref{approximationsection}. The liminf inequality builds on previous arguments from \cite{ADMDS, DMPN,  MPTARMA}. 
%Let us state in particular a useful technical lemma from \cite{DMPN}, where we recall that  $(\om,\mathbb{R}^{3\times3})\ni(x, \mathbf E)\mapsto \varphi(x,\mathbf E)\in[0,+\infty)$ is a Carath\'eodory function if $\varphi$ is Lebesgue measurable in $x$ and continuous in $\mathbf E$.

%\begin{lemma}
%Let $(h_j)_{j\in\mathbb N}\subset(0,+\infty)$ be a vanishing sequence. Let $\mathcal V$ be the function that was introduced in \eqref{}. For any $j\in\mathbb N$ and any $k\in\mathbb N$, there exist a   Carath\'eodory function $\mathcal V_j^k:\om\times\mathbb R^{3\times3_{{sym}}}\to[0,+\infty)$ and a  Lebesgue measurable function $\mu_k:\om\to(0,+\infty)$ such that for any $j\in\mathbb N$ and any $k\in\mathbb N$ the following hold:
%$\mathcal V_j^k\le \mathcal V_j^{k+1}$,
%$
%\mathcal V_j^k(x,\cdot)$ is convex for a.e. $x\in\om$, and moreover 
%\begin{equation}
%\mathcal V_j^k(x,\mathbb E) \le \frac{1}{h_j^2}\,\mathcal V(x, h_j\mathbf E)\qquad \mbox{ $\forall\mathbf E\in\mathbb R^{3\times3}_{sym},\;\;$ for a.e. $x\in\om$},
%\end{equation}
%\[
%\mathcal V_j^k(x,\mathbf E)=\frac12\left(1-\frac1k\right)\,\mathbf E^T D^2\mathcal V(x,\mathbf 0)\mathbf E\qquad \mbox{ if }\;\frac12\, \mathbf E^2D^2\mathcal V(x,\mathbf 0)\mathbf E\le \sqrt{\frac{\mu_k(x)}{h_j}}.
%\]
%\end{lemma}
%

\begin{lemma} {\rm ({\bf Energy convergence}).}
\label{convfunc}
Assume \eqref{OMEGA}, \eqref{Gamma}, \eqref{framind}, \eqref{Z1}, \eqref{reg},    
\eqref{coerc}.  Let $(h_j)_{j\in\mathbb N}$ be a vanishing sequence of positive numbers.
Then the sequence of functionals $(\mathcal G_{h_j}^I)_{j\in\mathbb N}$ is $\Gamma$-converging to functional $\mathcal G^I$ with respect to the weak topology of $W^{1,p}(\om,\mathbb R^3)$.
%\begin{equation}
%\displaystyle \Gamma (w-W^{1,p})\lim_{j\to\infty} \mathcal G_{h_j}^I=\mathcal G^I
%\end{equation}
\end{lemma}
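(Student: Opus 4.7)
My plan is to prove the $\Gamma$-convergence in the standard way, by establishing separately the liminf inequality and the existence of a recovery sequence for every $\v\in W^{1,p}(\om,\mathbb R^3)$.

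For the liminf inequality I take $\v_j\wconv \v$ in $W^{1,p}(\om,\mathbb R^3)$ and assume $\liminf_j\mathcal G^I_{h_j}(\v_j)<+\infty$, so that (up to a non-relabelled subsequence) $\v_j=0$ on $\Gamma$, whence $\v=0$ on $\Gamma$ by weak continuity of traces, and $\det(\mathbf I+h_j\nabla\v_j)=1$ a.e. in $\om$. Following the scheme of \cite{DMPN,ADMDS,MPTARMA}, I apply the rigidity inequality \eqref{muller} to $\mathbf y_j:=\bb{i}+h_j\v_j$ to produce rotations $\mathbf R_j\in SO(3)$; Lemma \ref{add}, combined with $\mathcal H^2(\Gamma)>0$ and $\v_j=0$ on $\Gamma$, then yields $|\mathbf R_j-\mathbf I|\le C h_j$. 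Setting $\mathbf B_j:=h_j^{-1}(\mathbf R_j^\top(\mathbf I+h_j\nabla\v_j)-\mathbf I)$, frame indifference gives $\mathcal W^I(x,\mathbf I+h_j\nabla\v_j)=\mathcal W^I(x,\mathbf I+h_j\mathbf B_j)$, and a standard truncation on the set $\{h_j|\mathbf B_j|\le 1\}$ (exploiting \eqref{coerc}) produces, up to a subsequence, a weak $L^2$ limit $\mathbf B$ with $\sym\mathbf B=\mathbb E(\v)$. Expanding the constraint $\det(\mathbf I+h_j\mathbf B_j)=1$ to first order in $h_j$ and passing to the weak limit gives $\tr\mathbf B=\dv\,\v=0$, matching the domain of $\mathcal G^I$. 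A Fatou-type argument based on the $C^2$ regularity of $\mathcal W$ in \eqref{reg} and the weak lower semicontinuity of the positive definite quadratic form \eqref{ellipticity} then gives
\[
\liminf_{j\to\infty}\frac{1}{h_j^2}\int_\om \mathcal W^I(x,\mathbf I+h_j\nabla\v_j)\,dx\ \ge\ \frac12\int_\om \mathbb E(\v)\,D^2\mathcal W(x,\mathbf I)\,\mathbb E(\v)\,dx,
\]
and combined with $\mathcal L(\v_j)\to\mathcal L(\v)$ (by weak continuity of $\mathcal L$ on $W^{1,p}$) this closes the liminf step.

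For the recovery sequence I fix $\v\in H^1_{\dv}(\om,\mathbb R^3)$ with $\v=0$ on $\Gamma$ (otherwise the inequality is trivial) and proceed in three successive approximations followed by a diagonal extraction. First, if $\partial\Gamma\neq\emptyset$, Lemma \ref{GlvsG} produces $\widetilde\v_k\to\v$ in $H^1(\om)$ with $\dv\widetilde\v_k=0$ a.e.\ in $\om$ and $\widetilde\v_k=0$ on an enlarged strip $\Gamma_{\lambda_k}$, $\lambda_k\to 0^+$; this step is trivial when $\partial\Gamma=\emptyset$, taking $\widetilde\v_k=\v$. Second, Lemma \ref{gammal} yields $\widetilde\v_{k,n}\in C^1(\overline{\om'},\mathbb R^3)$, divergence-free in $\om'$, vanishing on $\Gamma$, and converging to $\widetilde\v_k$ in $H^1(\om)$ as $n\to\infty$. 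Third, applying Lemma \ref{reynolds} to $\widetilde\v_{k,n}$ along the given sequence $(h_j)$ produces $\v_{k,n,j}\in C^1(\om',\mathbb R^3)$ with the exact pointwise incompressibility $\det(\mathbf I+h_j\nabla\v_{k,n,j})=1$ in $\om$, the boundary condition $\v_{k,n,j}=0$ on $\Gamma$, strong convergence $\v_{k,n,j}\to\widetilde\v_{k,n}$ in $W^{1,p}$ for every finite $p$, and the key uniform smallness $\|h_j\nabla\v_{k,n,j}\|_{L^\infty}\to 0$. The latter allows the pointwise Taylor bound
\[
\left|\tfrac{1}{h_j^2}\mathcal W^I(x,\mathbf I+h_j\nabla\v_{k,n,j})-\tfrac12\nabla\v_{k,n,j}^\top D^2\mathcal W(x,\mathbf I)\nabla\v_{k,n,j}\right|\le \omega(h_j\|\nabla\v_{k,n,j}\|_{L^\infty})\,|\nabla\v_{k,n,j}|^2,
\]
where $\omega$ is the $x$-independent modulus of continuity of $D^2\mathcal W(x,\cdot)$ from \eqref{reg}. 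Integration, combined with the $L^2$ convergence of $\nabla\v_{k,n,j}\to\nabla\widetilde\v_{k,n}$ and $\mathcal L(\v_{k,n,j})\to\mathcal L(\widetilde \v_{k,n})$, yields $\lim_j\mathcal G^I_{h_j}(\v_{k,n,j})=\mathcal G^I(\widetilde\v_{k,n})$. Since $\mathcal G^I(\widetilde\v_{k,n})\to\mathcal G^I(\widetilde\v_k)\to \mathcal G^I(\v)$ as $n\to\infty$ and then $k\to\infty$ (by $H^1$-continuity of the quadratic form and of $\mathcal L$), an Attouch-type diagonalization extracts a recovery sequence $\v_j:=\v_{k_j,n_j,j}$ which in fact converges to $\v$ strongly in $W^{1,p}$.

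The main obstacle is the third approximation step: one must simultaneously enforce the rigid pointwise incompressibility $\det(\mathbf I+h_j\nabla\v_j)=1$, the homogeneous Dirichlet datum on $\Gamma$, and a uniform decay of $h_j\nabla\v_j$ strong enough for the Taylor expansion. This is precisely the content of Lemma \ref{reynolds}, whose flow-based construction is the novel ingredient circumventing the pointwise nonlinear constraint that makes standard compressible recovery schemes inadequate in the incompressible setting; the preparatory Lemmas \ref{gammal} and \ref{GlvsG} ensure in turn that this construction can be carried out starting from a smooth divergence-free field that vanishes identically on $\Gamma$.
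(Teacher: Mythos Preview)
Your proposal is correct and follows essentially the same route as the paper: the limsup is obtained through the very same chain of approximations (Lemma~\ref{GlvsG}, then Lemma~\ref{gammal}, then Lemma~\ref{reynolds}) and the liminf through the truncation/rigidity scheme of \cite{DMPN,ADMDS}. The only cosmetic differences are that in the liminf the paper truncates directly on $\nabla\v_j$ (using the set $D_j=\{\sqrt{h_j}|\nabla\v_j|\le 1\}$) rather than on your rotated field $\mathbf B_j$, and in the limsup the paper assembles the three approximation layers via sequential weak lower semicontinuity of $\mathcal G^I_+$ rather than an explicit diagonal extraction; both choices are equivalent.
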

\begin{proof} Since the weak topology of $W^{1,p}$ is metrizable then we can characterize the $\Gamma-$limit in terms of weakly converging sequences. In particular, by setting (see \cite{DM}, \cite{DGFra})
\begin{equation*}\lab{Gammalim}\begin{array}{ll}
&\displaystyle\mathcal G^I_- (\v):=\inf \{\liminf_{j\to \infty} \mathcal G_{h_j}^I(\v_j): \v_j\wconv \v \ \hbox{ weakly in} \ W^{1,p}(\om,\mathbb R^3)\},\\
&\\
&\displaystyle\mathcal G^I_+ (\v):=\inf \{\limsup_{j\to \infty} \mathcal G_{h_j}^I(\v_j): \v_j\wconv \v \ \hbox{ weakly in} \ W^{1,p}(\om,\mathbb R^3)\},\\
\end{array}
\end{equation*}
since  $\mathcal G^I_+ (\v)\ge \mathcal G^I_- (\v)$,  it is enough  to prove that $\mathcal G^I_+ (\v)\le \mathcal G^I(\v) \le \mathcal G^I_- (\v)$ for every $\v\in W^{1,p}(\om, \mathbb R^3)$. We split the proof in two steps.\\

{\bf Step 1 (liminf)}.  We show  that $\mathcal G^I(\v) \le \mathcal G^I_- (\v)$ for every $\v\in W^{1,p}(\om, \mathbb R^3)$. 

Let $\v\in W^{1,p}(\om, \mathbb R^3)$, assume without restriction that $\mathcal G^I_- (\v)< +\infty$, and let $(\v_j)_{j\in\mathbb N}\subset W^{1,p}(\om,\mathbb R^3)$ be a sequence such that $\v_{j}\wconv \v$ weakly in $W^{1,p}(\Omega,\R^3)$ as $j\to+\infty$ and such that $\sup_{j\in\mathbb N}\mathcal G_{h_{j}}^{I}(\v_j)<+\infty$. Then  $\v_{j}=0$ on $\Gamma$ for any $j\in\mathbb N$, hence $\v=0$ on $\Gamma$ as well, and by setting $\mathbf B_{j}:=2\mathbb E(\v_{j})+h_{j}\nabla\v_{j}^{T}\nabla\v_{j}$ we get % \MMM attenzione a segni e trasposti \KKK
\begin{equation*}\begin{aligned}
1&=\det(\mathbf I+h_{j}\nabla\v_{j})=\det(\mathbf I+h_{j}\nabla\v_{j}^T)(\mathbf I+h_{j}\nabla\v_{j})=\det(\mathbf I+2h_{j}\mathbb E(\v_{j})+h_{j}^{2}\nabla\v_{j}^{T}\nabla\v_{j})\\
&= 1+h_{j}\mathrm{Tr} \mathbf B_{j}-\frac{1}{2}h_{j}^{2}(\mathrm{Tr}(\mathbf B_{j}^{2})-(Tr \mathbf B_{j})^{2})+h_{j}^{3}\det \mathbf B_{j}
\end{aligned}
\end{equation*}
a.e. in $\om$, that is, 
\begin{equation}\lab{TrB}
Tr \mathbf B_{j}=2\dv \v_{j}+h_{j}|\nabla\v_{j}|^{2}=\frac{1}{2}h_{j}(Tr(\mathbf B_{j}^{2})-(Tr \mathbf B_{j})^{2})-h_{j}^{2}\det \mathbf B_{j}.
\end{equation}

We next prove, with an argument from \cite{ADMDS}, that $\v\in H^1(\om,\mathbb R^3)$ and that 
%\begin{equation}
\begin{eqnarray}
&1_{D_j}\nabla\v_j\wconv \nabla\v\; \hbox{ weakly in}\ L^2(\om,\mathbb R^{3\times3}),\lab{dj1}\\
&1_{\om\setminus D_j}\nabla\v_j\to 0\ \hbox{ in}\ L^\alpha(\om,\mathbb R^{3\times3}),\;\; \forall \alpha\in [1,p),\lab{dj2}
\end{eqnarray}
%\end{equation}
where we have set
$
\displaystyle D_j:=\left\{x\in\om: \sqrt{h_j}\,|\nabla\v_j(x)|\le1\right\}.
$
Indeed, 
since we are assuming that $\sup_{j\in\mathbb N}\mathcal G^I_{h_j}(\v_j)<+\infty $, we have 
\begin{equation}\label{boundedwi}
\sup_{j\in\mathbb N}\frac1{h_j^2}\int_\om \mathcal W^I(x,\mathbf I+h_j\nabla\v_j)\,dx<+\infty,
\end{equation}
 thanks to the definition of $\mathcal G^I_{h_j}$ and to the boundedness of the sequence $(\v_j)_{j\in\mathbb N}$ in $W^{1,p}(\om,\mathbb R^3)$. Let $\mathbf R_j\in SO(3)$ be a constant matrix satisfying \eqref{muller} with respect to $\mathbf y_j=\bb i+h_j\v_j$.
%by \eqref{coerc} we conclude that there is a constant $C>0$ (only depending on $p$, $\om$ and $\Gamma$) such that
%\[
%\int_\om g_p(|\mathbf I+h_j\nabla\v_j-\mathbf R_j|)\,dx\le  C\int_\om \mathcal W^I(x,\mathbf I+h_j\nabla\v_j)\,dx\qquad\mbox{ for any $j\in\mathbb N$}, 
%\]
If $Q_j:=\{x\in\om: |\mathbf I+h_j\nabla\v_j(x)-\mathbf R_j|\le 3\sqrt{3}\}$, we have  $D_j\subset Q_j$ for any $j$ large enough, and by definition of $g_p$ it is clear that there exists a constant $K$ only depending on $p$ such that $g_p(x)\ge Kx^2$ for any $x\in[0,3\sqrt{3}]$, so that
\[\begin{aligned}
\int_{D_j}|\nabla \v_j|^2\,dx&\le \frac K{h_j^2}\int_{Q_j} \left(g_p(|\mathbf I+h_j\nabla\v_j-\mathbf R_j|)+|\mathbf I-\mathbf R_j|^2\right)\,dx\\&\le \frac{KC}{h_j^2}\int_\om\mathcal W^I(x,\mathbf I+h_j\nabla\v_j)\,dx+K|\om|\,\frac{|\mathbf I-\mathbf R_j|^2}{h_j^2},
\end{aligned}\]
where we have used \eqref{coerc} and \eqref{muller}. By taking advantage of \eqref{borrow} and of \eqref{boundedwi}, we conclude that the sequence $(1_{D_j}\nabla \v_j)_{j\in\mathbb N}$ is bounded in $L^2(\om, \mathbb R^{3\times3})$, so that up to (not relabeled) subsequences, $ 1_{D_j}\nabla \v_j\rightharpoonup\mathbf H$ weakly in  $L^2(\om,\mathbb R^{3\times3})$.
On the other hand, if $\alpha\in[1,p)$,  by H\"older inequality and the definition of $D_j$ we have \[\begin{aligned}\|1_{\om\setminus D_j}\nabla\v_j\|_{L^\alpha(\om,\mathbb R^{3\times3})}\le\|\nabla\v_j\|_{L^p(\om,\mathbb R^{3\times 3})}\,|\om\!\setminus\! D_j|^{\frac{p-\alpha}{p\alpha}}
\!\le \|\nabla\v_j\|_{L^p(\om,\mathbb R^{3\times 3})}\!\left(\!\sqrt{h_j}\int_\om|\nabla\v_j|\,dx\!\right)^{\!\!\frac{p-\alpha}{p\alpha}}
\end{aligned}\]
and \eqref{dj2} follows from the fact that the above  right hand side is vanishing as $j\to+\infty$, since the sequence $(\v_j)_{j\in\mathbb N}$ is bounded in $W^{1,p}(\om,\mathbb R^3)$. The latter property also implies the weak convergence (up to not relabeled subsequences) of $\nabla\v_j$ to $\nabla \v$ in $L^{\alpha}(\om,\mathbb R^{3\times3})$:
 since  \eqref{dj2} holds and since $1_{D_j}\nabla \v_j=(\nabla\v_j-1_{\om\setminus D_j}\nabla\v_j)$, we obtain both $\nabla\v=\mathbf H\in L^2(\om,\mathbb R^{3\times3})$ and \eqref{dj1}, and Friedrichs inequality yields $\mathbf v\in H^1(\om,\mathbb R^3)$.

Thanks to the properties \eqref{dj1} and \eqref{dj2} we get 
\begin{equation*}
\sqrt{h_j}\nabla\v_j=\sqrt{h_j}(1_{D_j} \nabla\v_j+1_{\om\setminus D_j}\nabla\v_j)\to 0 \quad\mbox{ in $L^\alpha(\om,\mathbb R^{3\times3})$}
\end{equation*}
as $j\to+\infty$ for any $\alpha\in[1,p)$, hence (up to not relabeled subsequences) $\sqrt{h_j}\nabla\v_j\to 0$ a.e. in $\om$.
By taking into account that for some constant $c> 0$ there hold
\begin{equation*}\lab{TrB2}\begin{array}{ll}
&|Tr \mathbf B_j^2|\le c(|\nabla\v_j|^2+h_j^2|\nabla\v_j|^4+ h_j|\nabla\v_j|^3),\\
&\\
&|Tr \mathbf B_j|^2\le c(|\nabla\v_j|^2+h_j^2|\nabla\v_j|^4),\\
&\\
&|\det\mathbf B_j|\le c|\mathbf B_j|^3\le C(|\nabla\v_j|^3+h_j^3|\nabla\v_j|^6),\\
\end{array}
\end{equation*}
we get 
\begin{equation*}
h_{j}|\nabla\v_{j}|^{2}+\frac{1}{2}h_{j}(Tr(\mathbf B_{j}^{2})+(Tr \mathbf B_{j})^{2})+h_{j}^{2}\det \mathbf B_{j}\to 0
\end{equation*}
a.e. in $\om$ as $j\to+\infty$. Hence, by \eqref{TrB},  $\dv\v_j\to 0$ a.e. in $\om$ and by recalling that $\dv\v_j\wconv\dv\v$ weakly in $L^p(\om)$
we have $\dv\v=0$ a.e. in $\om$.  Since we have previously shown that $\v\in H^1(\om,\mathbb R^3)$ and that $\v=0$ on $\Gamma$, we deduce that $\mathcal G^I(\v)$ is finite.\KKK %By arguing now as in \cite[Theorem 2.4]{ADMDS}  we easily get,
%since $\mathcal W(x,\mathbf I+\mathbf F)\ge \frac12 \mathbf F^T\,D^2\mathcal W()$

By  assumption \eqref{reg}, $D^2\mathcal W(x,\cdot)\in C^2(\mathcal U)$ for a.e. $x\in\om$ and    there is an increasing function $\omega:[0,+\infty)\to\mathbb R$ such that $\lim_{y\to0}\omega(y)=0$ and $|D^2\mathcal W(x,\mathbf I+\mathbf F)-D^2\mathcal W(x,\mathbf I)|\le \omega(|\mathbf F|)$ for any $\mathbf F\in \mathcal U$ and for a.e. $x\in\om$. We notice that for any large enough $j$, we have $\mathbf I+h_j\nabla\v_j\in\mathcal U$ for any $x\in D_j$.     Therefore, 
\begin{equation}\label{nee}\begin{aligned}
&\limsup_{j\to+\infty}\int_{D_j} \left|\frac{1}{h_j^2}\mathcal W(x,\mathbf I+h_j\nabla\v_j)-\frac12\,\nabla\v_j^T D^2\mathcal W(x,\mathbf I) \nabla\v_j\right|\,dx\\
&\qquad\le\limsup_{j\to+\infty}\int_{D_j}\omega(h_j|\nabla\v_j|)\,|\nabla\v_j|^2\,dx\le\limsup_{j\to+\infty}\,\omega(\sqrt{h_j})\int_{\om} 1_{D_j}|\nabla\v_j|^2\,dx =0,
\end{aligned}\end{equation}
where we have also used \eqref{smooth0} and \eqref{dj1}.

Finally, by taking advantage of \eqref{nee} and \eqref{dj1},  since $\mathcal W^I\ge\mathcal W$
and since the map $\mathbf F\mapsto\int_\om  \mathbf F^TD^2\mathcal W(x,\mathbf I)\,\mathbf F\,dx$ is lower semicontinuous with respect to the weak $L^2(\om,\mathbb R^{3\times3})$ convergence,
 we conclude that
\[\begin{aligned}
&\liminf_{j\to+\infty} \int_\om \frac1{h_j^2}\mathcal W^I(x,\mathbf I+h_j\nabla\v_j)\,dx\ge \liminf_{j\to+\infty}\int_{D_j}\frac{1}{h_j^2}\mathcal W(x,\mathbf I+h_j\nabla\v_j)\,dx
\\&\quad
\ge\liminf_{j\to+\infty}\int_{D_j}\frac12 \nabla\v_j^T D^2\mathcal W(x,\mathbf I)\nabla\v_j\,dx
=\liminf_{j\to+\infty}\int_{\om}\frac12(1_{D_j}\nabla\v_j)^T D^2\mathcal W(x,\mathbf I)(1_{D_j}\nabla\v_j)
\\&\quad\ge \int_\om\frac12\nabla\v^T D^2\mathcal W(x,\mathbf I)\nabla\v\,dx=\frac12\int_\om\mathbb E(\v)D^2\mathcal W(x,\mathbf I )\,\mathbb E(\v)\,dx.
\end{aligned}
\]
Since functional $\mathcal L$ from \eqref{external} is continuous with respect to the weak convergence in $W^{1,p}(\om,\mathbb R^3)$, we  get 
\[\begin{aligned}\liminf_{j\to+\infty}\mathcal G^I_{h_j}(\v_j)&=
\liminf_{j\to+\infty} \int_\om  \frac{1}{h_j^2}\mathcal W^I(x,\mathbf I+h_j\nabla\v_j)\,dx-\mathcal L(\v_j)\\&\ge\frac12\int_\om\mathbb E(\v)D^2\mathcal  W(x,\mathbf I)\,\mathbb E(\v))\,dx
-\mathcal L(\v)=\mathcal G^I(\v).
\end{aligned}\]
%\begin{equation}\begin{array}{ll}
%& \displaystyle\liminf_{j\rightarrow +\infty} \mathcal G_{h_{j}}^{I}(\v_j)\geq \liminf_{j\rightarrow +\infty} \int_\Omega \mathcal V_{h_{j}}(x, \nabla\v_{j})\, dx\,-\,\mathcal L(\v)\ge \displaystyle \int_\Omega 
%\mathcal V_{0}(x,\mathbb E(\v))
% dx \,- \,\mathcal L(\v)
% \end{array}
%\end{equation}
%By taking into account that we have shown $\v\in H^1(\om;\mathbb R^3),\ \dv \v=0$ a.e. in $\om$ and $\v=0$ on $\Gamma$, we get
%\begin{equation}
% \int_\Omega 
%\mathcal V_{0}(x,\mathbb E(\v))
% dx \,- \,\mathcal L(\v)= \mathcal G^{I}(\v).
%\end{equation}
Therefore,   $\mathcal G^I_- (\v)< +\infty$  only if $\v\in H^1_{\mathrm{div}}(\om,\mathbb R^3)$ with $\v=0$ on $\Gamma$,  and  $\mathcal G^I(\v) \le \mathcal G^I_- (\v)$ for every $\v\in W^{1,p}(\om,\mathbb R^3)$.\\

{\bf Step 2 (limsup)}. We show now that $\mathcal G^I_+ (\v)\le \mathcal G^I(\v) $ for every $\v\in W^{1,p}(\om,\mathbb R^3)$.

%Since $\mathcal G^I (\v)< +\infty$  only if $\v\in H^1(\om;\mathbb R^3)$
 It will be enough to prove the inequality for every $\v\in H^1_{\mathrm{div}}(\om,\mathbb R^3)$ such that $\v=0$ on $\Gamma$ (otherwise $\mathcal G^I(\v)=+\infty$).  This will be done in three subsequent steps, that make use of Lemma \ref{reynolds}, Lemma \ref{gammal} and Lemma \ref{GlvsG}, respectively.  
 %\MMM in fact $H^1_{\mathrm{div}}(\om,\mathbb R^3)$ with $\v=0$ on $\Gamma$. 
 \KKK

Assume first that $\v$ is the restriction to $\om$ of a function $\v\in C^1(\om',\mathbb R^3)$ such that $\v=0$ on $\Gamma$ and $\dv\v=0$ in $\om'$, being $\om'$  an open set with $\overline\om\subset\om'$. By Lemma \ref{reynolds} there exists a sequence $(\v_{j})_{j\in\mathbb N}\subset C^1(\om',\R^3)$ such that \eqref{1}, \eqref{2},\eqref{3} and \eqref{4} hold.
%\begin{equation}\label{11}
%\det (\mathbf I+h_{j}\nabla \v_{j})=1\ \hbox{ a.e. in}\ \om,
%\end{equation}
%\begin{equation}\label{22}
% \v_{j}=0\ \hbox{ on }\ \Gamma,
%\end{equation} 
%\begin{equation}\label{33}\v_{j}\to \v\ \hbox{ in}\  W^{1,p}(\om, \mathbb R^{3})\qquad \forall p\in [1,+\infty),\end{equation}
%\begin{equation}\label{44} \|h_{j}\nabla \v_{j}\|_{L^{\infty}(\om)}\to 0
%\end{equation}
Hence,  \eqref{reg}, \eqref{1} and \eqref{4} together with $\mathcal W(x, \mathbf I)=0,\ D\mathcal W(x, \mathbf I)=0$, see \eqref{smooth0}, imply that  
\[
\lim_{j\to+\infty} h_j^{-2} \mathcal W^I(x,\mathbf I+h_j \nabla\v_{j})=\lim_{j\to+\infty} h_j^{-2}\mathcal W(x,\mathbf I+ h_j\nabla\v_{j})=\frac12\,\mathbb E(\v)D^2\mathcal W(x,\mathbf I)\,\mathbb E(\v)
%\;\; \hbox{ for a.e}\  x\in \om
\]
%\begin{equation}
%\displaystyle\lim_{j\to+\infty} \mathcal V_{h_j}^I(x, \nabla\v_{j})=\lim_{j\to+\infty} \mathcal V_{h_j}(x, \nabla\v_{j})=\mathcal V_0(x, \mathbb E(\v))\;\; \hbox{ for a.e}\  x\in \om
%\end{equation}
for a.e. $x\in\om$, and that there exists a constant $C'> 0$ such that for $h_j$ small enough there holds
$h^{-2}_j\mathcal W(x,\mathbf I+h_j\nabla\v_j)\le C'|\mathbb E(\v_j)|^2$.
 %$\mathcal V_{h_j}(x, \nabla\v_j)\le C'|\mathbb E(\v_j)|^2$. 
 Therefore by \eqref{3} there exist $q> 1$ and a constant $C''>0$ such that for any large enough $j$
\begin{equation*}
\int_\om\left(\frac1{h_j^2} \mathcal W(x,\mathbf I+h_j \nabla\v_j)\right)^q\, dx\le C'',
\end{equation*}
thus
\begin{equation*}
\displaystyle\lim_{j\to\infty} \int_\om \frac1{h_j^2} \mathcal W^I(x,\mathbf I+h_j \nabla\v_j)\,dx-\mathcal L(\v_j)=\frac12\int_\om \mathbb E(\v)D^2\mathcal W(x,\mathbf I)\,\mathbb E(\v)\,dx-\mathcal L(\v).
\end{equation*}
%\begin{equation}
%\displaystyle\lim_{j\to\infty} \int_\om\mathcal V_{h_j}^I(x, \nabla\v_{j})\,dx=\int_\om\mathcal V_0(x, \mathbb E(\v))\,dx.
%\end{equation}
This shows that $\mathcal G^I_+ (\v)\le \mathcal G^I(\v) $ whenever $\v$ is the restriction to $\om$ of a function $\v\in C^1(\om',\mathbb R^3)$ such that $\v=0$ on $\Gamma$ and $\dv\v=0$ in $\om'$, being $\om'$  an open set with $\overline\om\subset\om'$.

Assume now
that $ \v\in H^{1}(\om,\mathbb R^{3}),\ \dv\, \v=0$ a.e. in $\om$ and $\v=0$ on $\Gamma_\delta$ for some
 $0<\delta < \mu_0$, where $\mu_0$ is defined by \eqref{mu0}.   Then by Lemma \ref{gammal} there exist an open set $\om '$ such that $\overline\om\subset \om ' $ and a sequence
 $(\v_{j})_{j\in\mathbb N}\subset C^1({\om '},\R^3)$ such that $\mathrm{div}\, \v_{j}=0$ a.e. in $\om '$, $\v_{j}=0$ on $\Gamma$ and $\v_{j}\to \v$ in   $H^{1}(\om,\mathbb R^{3})$. Therefore,
\begin{equation*}
\mathcal G^{I}_+ (\v_j)\le \mathcal G^{I}(\v_j). 
\end{equation*}
By taking into account that $\mathcal G^{I}_+$ is weakly lower semicontinuous  in $W^{1,p}(\om,\mathbb R^3)$ and that $\mathcal G^I$ is continuous with respect the strong convergence in $H^1(\om,\mathbb R^3)$ we  get
\begin{equation*}\lab{GammaH1}
\mathcal G^{I}_+ (\v)\le \mathcal G^{I}(\v) 
\end{equation*}
for every $ \v\in H^{1}(\om,\mathbb R^{3})$ such that $ \dv\, \v=0$ a.e. in $\om$ and $\v=0$ on $\Gamma_\delta$ for some
 $0<\delta < \mu_0$. If $\partial\Gamma=\emptyset$, then $\Gamma_\delta=\Gamma$ and the proof is concluded. Suppose instead that $\partial\Gamma\neq\emptyset$
and  let $ \v\in H^{1}(\om,\mathbb R^{3}),\ \dv\, \v=0$ a.e. in $\om$ and $\v=0$ on $\Gamma$. By Lemma \ref{GlvsG} there exist a vanishing sequence $(\lambda_j)_{j\in\mathbb N}\subset(0,\mu_0)$ and a sequence $(\v_j)_{j\in\mathbb N}\subset H^{1}(\om,\mathbb R^{3})$ such that $\dv\, \v_j=0$ a.e. in $\om$,  $\v_j=0$ on $\Gamma_{\lambda_j}$ and $\v_j\to\v$ in $H^{1}(\om,\mathbb R^{3})$. Then
 \begin{equation*}
\mathcal G^{I}_+ (\v_j)\le \mathcal G^{I}(\v_j) 
\end{equation*}
and by exploiting again the weak lower semicontinuity of $\mathcal G^{I}_+$ in $W^{1,p}(\om,\mathbb R^3)$ and continuity of $ \mathcal G^{I}$ in $H^1(\om,\mathbb R^3)$, we achieve the result.
\end{proof}

The proof of the main result directly follows.

\begin{proofad1}
We prove first that $\mathcal G^I$ has a unique minimizer. 
 Weak $H^1(\om,\mathbb R^3)$ compactness of minimizing sequences follows from \eqref{ellipticity} along with Korn and Poincar\'e inequalities.
Along a sequence that converges weakly in $H^1(\om,\mathbb R^3)$, the elastic part of the energy is lower semicontinuous, functional  $\mathcal L$ is continuous, and the divergence-free constraint passes to the limit as well as the vanishing constraint on $\Gamma$. This shows existence of minimizers of $\mathcal G^I$.    
  Let us prove uniqueness of minimizers. Let
\begin{equation}\lab{V0}
\displaystyle\mathcal V_0(x, \mathbf B):=\frac{1}{2}\sym \mathbf B D^2\mathcal W(x,\mathbf I)\sym \mathbf B
\end{equation}
and let $\v_*,\ \v_{**}$ be two minimizers of ${\mathcal G}^I$ (in particular, $\v_*=\v_{**}=0$ on $\Gamma$). Then by first order minimality conditions we have
\begin{equation}\lab{eulerV0}\begin{aligned}
&\displaystyle\int_\om D\mathcal V_0(x,\mathbb E(\v_*))\cdot (\mathbb E(\v_*)-\mathbb E(\v_{**}))\,dx\\
&\qquad=\displaystyle\int_\om D\mathcal V_0(x,\mathbb E(\v_{**}))\cdot (\mathbb E(\v_*)-\mathbb E(\v_{**}))\,dx=\mathcal L(\v_*-\v_{**}).
\end{aligned}
\end{equation}
Hence, by \eqref{V0} and \eqref{eulerV0}
\begin{equation*}
2\int_\om\mathcal V_0(x,\mathbb E(\v_*)-\mathbb E(\v_{**}))=\int_\om D\mathcal V_0(x,\mathbb E(\v_*)-\mathbb E(\v_{**}))\cdot (\mathbb E(\v_*)-\mathbb E(\v_{**}))\,dx=0,
\end{equation*}
therefore \eqref{ellipticity} implies $\mathbb E(\v_*)-\mathbb E(\v_{**})=0$. Since $\v_*-\v_{**}=0$ on $\Gamma$, we deduce $\v_*-\v_{**}=0$ a.e. on  $\om$ thus proving uniqueness. From now we denote by $\v_*$ the unique minimizer of $\overline{\mathcal G}^I$.

By testing with the trivial displacement field, we see that $\inf\mathcal G^I_{h_j}\le 0 $ for any $j\in\mathbb N$. On the other hand, since $\mathcal L(\v)\le C_{\mathcal L}\|\v\|_{W^{1,p}(\om,\mathbb R^3)}$, boundedness from below of functional $\mathcal G^I_{h_j}$ easily follows from \eqref{bsec} and Friedrichs inequality as soon as $j$ is large enough. This proves \eqref{convmin}.

%Let $\overline\v\in H^1(\om,\mathbb R^3)$ be a minimizer of functional $\mathcal G^I$.  By Lemma \ref{convfunc}, there exists a recovery sequence for $\overline\v$, i.e., a sequence $(\overline\v_j)_{j\in\mathbb N}\subset W^{1,p}(\om,\mathbb R^3)$ such that $\overline\v_j\rightharpoonup\overline\v$ weakly in $W^{1,p}(\om,\mathbb R^3)$ and $\mathcal G^I_{h_j}(\overline\v_j)\to\mathcal G^I(\overline\v)$ as $j\to+\infty$. Therefore
The sequence $(\v_j)_{j\in\mathbb N}$ is bounded in $W^{1,p}(\om,\mathbb R^3)$, thanks to Lemma \ref{add}. Therefore, let us consider a (not relabeled) subsequence such that $\v_j\rightharpoonup\v$ weakly in $W^{1,p}(\om,\mathbb R^3)$. Let $\widetilde \v\in H^1(\om,\mathbb R^3) $ be such that $\widetilde\v=0$ on $\Gamma$ and $\mathrm{div}\,\widetilde\v=0$ a.e. in $\om$. Let $(\widetilde\v_j)_{j\in\mathbb N}\subset W^{1,p}(\om,\mathbb R^3)$ be a recovery sequence for $\widetilde \v$, provided by Lemma \ref{convfunc}.  By taking advantage of \eqref{assinf} and of the $\Gamma$-liminf inequality, still provided by Lemma \ref{convfunc}, we conclude that 
\[
\mathcal G^I(\v)\le\liminf_{j\to+\infty}\mathcal G^I_{h_j}(\v_j)\le\limsup_{j\to+\infty}\mathcal G^I_{h_j}(\widetilde\v_j)=\mathcal G^I(\widetilde \v).
\]  
By the arbitrariness of $\widetilde \v$ we get $\v\in \argmin\mathcal G^I$ hence $\v=\v_*$ and the whole sequence $(\v_j)_{j\in\mathbb N}$ converges to $ \v_*$ weakly in $W^{1,p}(\om,\mathbb R^3)$ thus concluding the proof.
\end{proofad1}
%\subsection{Proof of Corollary \ref{nonhom}}

The proof of Corollary 2.2 relies on the following preliminary result.
\begin{lemma}\lab{GbarvsGtilde} { Under the assumptions of {\rm Corollary \ref{nonhom}}},
let $ \overline\v\in W^{1,\infty}(\om,\mathbb R^3)$  be such that $\dv\overline\v=0\,$ a.e. in $\om$.  Then $\overline{\mathcal G}^I$ from \eqref{Gvbar} has a  unique minimizer and if $\v_*\in
\argmin\overline{\mathcal G}^I$ then $\v_*-\overline\v$ is the unique minimizer of
$\widetilde{\mathcal G}^I$ and $\widetilde{\mathcal G}^I(\v_*-\overline\v)=\overline{\mathcal G}^I(\v_*)$, { where $\widetilde{\mathcal G}^I$ is defined by \eqref{lastlabel}}.
\end{lemma}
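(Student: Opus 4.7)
The plan is to establish the algebraic identity $\overline{\mathcal G}^I(\v+\overline\v) = \widetilde{\mathcal G}^I(\v)$ for every admissible $\v$, which reduces the lemma to the existence/uniqueness argument already carried out in the proof of Theorem \ref{mainth1}.

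First I would observe that the affine translation $\v \mapsto \v+\overline\v$ induces a bijection between the effective domains of $\widetilde{\mathcal G}^I$ and $\overline{\mathcal G}^I$: indeed, $\overline{\mathcal G}^I(\u)<+\infty$ forces $\u\in H^1_{\mathrm{div}}(\om,\mathbb R^3)$ with $\u=\overline\v$ on $\Gamma$, and by the hypotheses $\overline\v\in W^{1,\infty}(\om,\mathbb R^3)$ and $\dv\overline\v=0$, this is equivalent to $\v:=\u-\overline\v \in H^1_{\mathrm{div}}(\om,\mathbb R^3)$ with $\v=0$ on $\Gamma$, i.e.\ to $\widetilde{\mathcal G}^I(\v)<+\infty$. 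Expanding the quadratic form $\mathbb E(\v+\overline\v)\,D^2\mathcal W(x,\mathbf I)\,\mathbb E(\v+\overline\v)$ via the linearity of $\mathbb E$ and the symmetry of $D^2\mathcal W(x,\mathbf I)$, together with the linearity of $\mathcal L$, one obtains
\[
\overline{\mathcal G}^I(\v+\overline\v) = \mathcal F^I(\v) + \int_\om \mathbb E(\overline\v)\,D^2\mathcal W(x,\mathbf I)\,\mathbb E(\v)\,dx + \overline{\mathcal G}^I(\overline\v),
\]
which, in view of \eqref{lastlabel} and of the identity $\mathcal G^I(\v)=\mathcal F^I(\v)$ whenever $\v=0$ on $\Gamma$, is precisely $\widetilde{\mathcal G}^I(\v)$.

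As a consequence, $\inf\overline{\mathcal G}^I=\inf\widetilde{\mathcal G}^I$, and $\v_*\in\argmin\overline{\mathcal G}^I$ if and only if $\v_*-\overline\v\in\argmin\widetilde{\mathcal G}^I$, with equal values. So existence and uniqueness of the minimizer of $\overline{\mathcal G}^I$, as well as the identity $\widetilde{\mathcal G}^I(\v_*-\overline\v)=\overline{\mathcal G}^I(\v_*)$, all follow from the corresponding statement for $\widetilde{\mathcal G}^I$.

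For the latter, I would repeat essentially verbatim the argument given at the beginning of the proof of Theorem \ref{mainth1}: the principal quadratic part is coercive on $\{\v\in H^1(\om,\mathbb R^3): \v=0\text{ on }\Gamma\}$ by \eqref{ellipticity}, Korn's inequality and Poincar\'e's inequality (using $\mathcal H^2(\Gamma)>0$); the extra linear contribution $\v\mapsto \int_\om \mathbb E(\overline\v)\,D^2\mathcal W(x,\mathbf I)\,\mathbb E(\v)\,dx$ is continuous on $H^1(\om,\mathbb R^3)$ with norm controlled by $K\|\overline\v\|_{W^{1,\infty}(\om,\mathbb R^3)}$ thanks to \eqref{reg}, hence it cannot destroy coercivity; the divergence-free condition and the homogeneous Dirichlet constraint on $\Gamma$ pass to weak $H^1$-limits, so the direct method yields a minimizer. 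Uniqueness follows from strict convexity of the quadratic part via \eqref{ellipticity}, exactly as in the uniqueness proof for $\mathcal G^I$ in Theorem \ref{mainth1}. The only real subtlety is the bookkeeping of the cross term in the algebraic identity above; once that is in place, everything else is a direct transfer of the results known from Theorem \ref{mainth1}.
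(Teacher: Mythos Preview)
Your proof is correct. The key algebraic identity
\[
\overline{\mathcal G}^I(\v+\overline\v)=\widetilde{\mathcal G}^I(\v)\qquad\text{for every admissible }\v
\]
is valid (it is exactly the expansion of the quadratic form plus the linearity of $\mathcal L$, together with $\overline{\mathcal G}^I(\overline\v)=\mathcal F^I(\overline\v)$), and once it is in place the bijection between effective domains and the transfer of existence, uniqueness, and the equality of values are immediate.

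The paper's own argument reaches the same conclusion by a slightly more roundabout path: it first asserts existence and uniqueness for $\overline{\mathcal G}^I$ and $\widetilde{\mathcal G}^I$ separately (citing the argument of Theorem \ref{mainth1}), then, fixing $\v_*\in\argmin\overline{\mathcal G}^I$, it proves $\widetilde{\mathcal G}^I(\v_*-\overline\v)\le\widetilde{\mathcal G}^I(\u)$ for every admissible $\u$ through a chain of equalities involving $\mathcal V_0$ and $D\mathcal V_0$ with a single inequality coming from the minimality of $\v_*$, and finally verifies $\widetilde{\mathcal G}^I(\v_*-\overline\v)=\overline{\mathcal G}^I(\v_*)$ by a direct computation. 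Your approach packages all of this into the single identity, which is conceptually cleaner and makes the role of the affine change of variables completely transparent; the paper's approach has the (minor) advantage that it never needs to state the identity globally, only to use it implicitly at the minimizer.
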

\begin{proof} Existence of a minimizer of $\overline{\mathcal G}^I$ and of $\widetilde{\mathcal G}^I$  again follows from classical results while regarding uniqueness of minimizers of $\overline{\mathcal G}^I$ and of 
$\widetilde{\mathcal G}^I$ we may argue as in the proof of Theorem \ref{mainth1}
and from now we denote by $\v_*$ the unique minimizer of $\overline{\mathcal G}^I$.

Let  $\u\in H^1_{\mathrm{div}}(\om,\mathbb R^3)$ be such that  $\u=0$ on $\Gamma$,  and set $\v=\u+\overline\v$. Then
$\v=\overline\v$ on $\Gamma$, $\dv\v=0$ a.e. in $\om$ and by using \eqref{V0}
\begin{equation*}\begin{aligned}
&\int_\om\mathcal V_0(x,\mathbb E(\v_*)-\mathbb E(\overline\v))+\int_\om D\mathcal V_0(x,\mathbb E(\overline\v))\cdot (\mathbb E(\v_*)-\mathbb E(\overline\v))\,dx-\mathcal L(\v_*-\overline\v)\\
&\qquad\displaystyle=\int_\om\mathcal V_0(x,\mathbb E(\v_*))\,dx+\int_\om\mathcal V_0(x,\mathbb E(\overline\v))-\int_\om D\mathcal V_0(x,\mathbb E(\overline\v))\cdot\mathbb E(\overline\v)\,dx-\mathcal L(\v_*-\overline\v)\\
&\qquad\displaystyle\le \int_\om\mathcal V_0(x,\mathbb E(\v))\,dx+\int_\om\mathcal V_0(x,\mathbb E(\overline\v))-\int_\om D\mathcal V_0(x,\mathbb E(\overline\v))\cdot\mathbb E(\overline\v)\,dx\\
&\qquad\quad+\int_\om D\mathcal V_0(x,\mathbb E(\overline\v))\cdot\mathbb E(\v)\,dx-\int_\om D\mathcal V_0(x,\mathbb E(\overline\v))\cdot\mathbb E(\v)\,dx-\mathcal L (\v-\overline\v)\\
&\qquad\displaystyle= \int_\om\mathcal V_0(x,\mathbb E(\v)-\mathbb E(\overline\v))\,dx+\int_\om D\mathcal V_0(x,\mathbb E(\overline\v))\cdot(\mathbb E(\v)-\mathbb E(\overline\v))\,dx-\mathcal L (\v-\overline\v),
\end{aligned}
\end{equation*}
that is, $\widetilde{\mathcal G}^I(\mathbf v_*-\overline{\mathbf \v})\le \widetilde{\mathcal G}^I(\mathbf u)$,
thus proving minimality of $\v_*-\overline\v$ for $\widetilde{\mathcal G}^I$ by the arbitrariness of $\mathbf u$. Uniqueness of such a minimizer follows by reasoning as in the first part of this proof so we have only to prove that $\widetilde{\mathcal G}^I(\v_*-\overline\v)=\overline{\mathcal G}^I(\v_*)$. Indeed
\begin{equation*}\begin{aligned}
\displaystyle\widetilde{\mathcal G}^I(\v_*-\overline\v)&=\int_\om \mathcal V_0(x,\mathbb E(\v_*))\,dx+\int_\om \mathcal V_0(x,\mathbb E(\overline\v))\,dx-\int_\om D\mathcal V_0(x,\mathbb E(\overline\v))\cdot\mathbb E(\v_*)\,dx\\
&\qquad\displaystyle-\mathcal L(\v_*-\overline\v)+ \int_\om D\mathcal V_0(x,\mathbb E(\overline\v))\cdot(\mathbb E(\v_*)-\mathbb E(\overline\v))\,dx + \overline{\mathcal G}^I(\overline\v)\\
&\displaystyle=\int_\om \mathcal V_0(x,\mathbb E(\v_*))\,dx-\int_\om \mathcal V_0(x,\mathbb E(\overline\v))\,dx-\mathcal L (\v_*-\overline\v)+\overline{\mathcal G}^I(\overline\v)=\overline{\mathcal G}^I(\v_*)
\end{aligned}
\end{equation*}
and the proof is concluded.
\end{proof}
\begin{proofad2}
Since the map
\begin{equation*}
W^{1,p}(\om,\mathbb R^3)\ni\v\mapsto\int_\om \mathbb E(\overline\v)\, D^2\mathcal W(x,\mathbf I)\,\mathbb E(\v)\,dx
\end{equation*}
is continuous with respect to the weak topology of $W^{1,p}(\om,\mathbb R^3)$,
 Lemma \ref{convfunc} implies the $\Gamma$-convergence of functionals $\widetilde{\mathcal G}_{h_j}^I$ to $\widetilde{\mathcal G}^I$ with respect to the same topology. 

We notice that $\widetilde{\mathcal G}^I_{h_j}(\mathbf 0)\le \overline{\mathcal G}^I(\overline\v)$ so that $\inf \widetilde{\mathcal G}^I_{h_j}<+\infty$ for any $j\in\mathbb N$, where the infimum is taken on $W^{1,p}(\om,\mathbb R^3)$. Since $\overline{\mathcal G}^I(\overline\v) \in\mathbb R$
 and since by assumption \ref{reg} there holds
 \begin{equation}\label{signestimate}\int_\om \mathbb E(\overline\v)\, D^2\mathcal W(x,\mathbf I)\,\mathbb E(\v)\,dx\le K |\om|^{\frac{p-1}{p}} \|\nabla\overline \v\|_{L^\infty(\om,\mathbb R^{3\times3})}\,\|\v\|_{W^{1,p}(\om,\mathbb R^3)},
 \end{equation}
by the same reasoning of the proof of Theorem \ref{mainth1} we deduce boundedness from below of $\widetilde{\mathcal G}^I_{h_j}$ for any large enough $j$ so that \eqref{convmin2} holds.

 Let now  $(\mathbf v_j)_{j\in\mathbb N}\subset H^1(\om,\mathbb R^3)$ be a sequence such that $\v_j=0$ on $\Gamma$ and such that
 \eqref{assinf2} holds. By the same argument of the proof of Lemma \ref{add}, this time also taking \eqref{signestimate} into account, we deduce that $\sup_{j\in\mathbb N}\|\v_j\|_{W^{1,p}(\om,\mathbb R^3)}<+\infty$. 
%\lim_{j\to +\infty}\{ \widetilde{\mathcal G}^{I}_{h_{j}}(\v_{j})-\inf\widetilde{\mathcal G}^{I}_{h_{j}}\}= 0,
% Then by \eqref{reg} and by Young inequality we get for every $\delta> 0$
% \begin{equation*}
%{\mathcal G}_{h_j}^I(\v_{h_j})\le C+\frac{K^{p/p-1}}{p\delta^{p/p-1}}\|\mathbb E(\overline\v)\|_{\infty}^{p/p-1}|\om|+\frac{\delta^p}{p}\int_\om |\mathbb E(\v_j)|^p\,dx
%\end{equation*}
%where $C>0$ is a suitable constant. By recalling \eqref{bsec} and Poincar\`e inequality we get 
%\begin{equation*}\begin{array}{ll}
%&\displaystyle\int_\om |\nabla\v_j|^p\,dx\le  C\left(1+\int_\om\mathcal W^I(x, \mathbf I+h_j\nabla\v_j)\,dx\right)\le\\
%&\\
%&\displaystyle\le  C+\frac{K^{p/p-1}}{p\delta^{p/p-1}}\|\mathbb E(\overline\v)\|_{\infty}^{p/p-1}|\om|+\frac{\delta^p}{p}\int_\om |\mathbb E(\v_j)|^p\,dx+\int_\om \mathbf f\cdot\v_j\,dx\le\\
%&\\
%&\displaystyle\le  C+\frac{K^{p/p-1}}{p\delta^{p/p-1}}\|\mathbb E(\overline\v)\|_{\infty}^{p/p-1}|\om|+\frac{\delta^p}{p}\int_\om |\nabla\v_j)|^p\,dx+\frac{1}{p\delta^{p/p-1}}\int_\om |\mathbf f|^{p/p-1}\,dx+\delta^p\int_\om|\v_j|^p\,dx\le\\
%&\\
%&\displaystyle C'+C''\delta^p\int_\om |\nabla\v_j)|^p\,dx
%\end{array}
%\end{equation*}
%where $C', C''$ denote strictly positive constants depending only on $C, p, K, \om, \overline\v$. Therefore by choosing $\delta$ small enough we get
%\[\displaystyle\int_\om |\nabla\v_j|^p\,dx\le  C'''\]
%hence bearing in mind that $\v_j=\overline\v$ on $\Gamma$ we get $\v_j$ equibounded in $W^{1,p}(\om;\mathbb R^3)$ so, 
Therefore, up to not relabeled subsequences, $\v_j\wconv \v_0$ weakly in $W^{1,p}(\om,\mathbb R^3)$. Thanks to $\Gamma$-convergence, by the same argument of the proof of Theorem \ref{mainth1}, we conclude that $  \widetilde{\mathcal G}^{I}_{h_{j}}(\v_{j})\to \widetilde{\mathcal G}^{I}(\v_0)$ as $j\to+\infty$ and that $\v_0$ is the unique minimizer of $\widetilde{\mathcal G}^I$ over $W^{1,p}(\om,\mathbb R^3)$. In particular, the whole sequence $(\v_j)$ converges to $\v_0$ weakly in $W^{1,p}(\om,\mathbb R^3)$. By Lemma \ref{GbarvsGtilde}, $\v_0+\overline\v\in\mathrm{argmin}\,\overline{\mathcal G}^I$ and $\widetilde{\mathcal G}^{I}(\v_0)=\overline{\mathcal G}^I(\v_0+\overline\v)
$ so that we have recovered the unique minimizer of $\overline {\mathcal G}^I$, thus concluding the proof.
\end{proofad2}

\subsection*{Acknowledgements} 
The authors wish to thank Giuseppe Savar\'e for useful suggestions about Lemma \ref{reynolds}.
The authors acknowledge support from the MIUR-PRIN  project  No 2017TEXA3H.
The authors are members of the
GNAMPA group of the Istituto Nazionale di Alta Matematica (INdAM).

\end{document}